\documentclass[smallextended]{svjour3}

\usepackage{xparse}

\usepackage{amsmath}
\usepackage{amssymb}
\usepackage{stmaryrd}
\usepackage{mathrsfs}
\usepackage{xfrac}

\usepackage{siunitx}
\usepackage{booktabs}
\usepackage[shortlabels]{enumitem}
\usepackage{algorithm}
\usepackage{algpseudocodex}

\usepackage[group-separator={,}]{siunitx}

\makeatletter
\let\cl@chapter\undefined
\makeatletter
\usepackage{hyperref}
\usepackage[nameinlink, noabbrev]{cleveref}

\usepackage{graphicx}
\usepackage{tikz}
\usepackage{pgfplots}
\usepackage{subcaption}


\usepackage[mathlines]{lineno}





\pgfplotsset{compat=1.18}

\definecolor{qCluster}{RGB}{128, 0, 128}
\definecolor{qNear}{RGB}{0, 206, 209}
\definecolor{qFar}{RGB}{205, 91, 69}
\definecolor{qProxy}{RGB}{0, 0, 0}

\definecolor{akColor}{RGB}{255, 95, 5}
\definecolor{afColor}{RGB}{30, 56, 119}



\usepackage{xspace}

\NewDocumentCommand \stageone {} {\emph{stage-1}}
\NewDocumentCommand \stagetwo {} {\emph{stage-2}}
\NewDocumentCommand \stagequad {} {\emph{stage-2-quad}}

\NewDocumentCommand \rtgt {} {\ensuremath{r_{\text{tgt}, i}}}

\NewDocumentCommand \rpxy {} {\ensuremath{r_{\text{pxy}, i}}}

\NewDocumentCommand \ctgt {} {\ensuremath{\vect{c}_{\text{tgt}, i}}}



\NewDocumentCommand \coloneqq {o} {\mathrel{\mathop:}\mathrel{\mkern-1.2mu}=}
\NewDocumentCommand \dx { O{x} } {\mathop{}\!\mathrm{d} #1}
\NewDocumentCommand \vect { m } {\boldsymbol{#1}}
\NewDocumentCommand \matr { m } {\boldsymbol{#1}}




\newtheorem{method}{Method}

\AddToHook{env/method/begin}{\crefalias{theorem}{method}}

\crefname{algocf}{algorithm}{algorithms}
\Crefname{algocf}{Algorithm}{Algorithms}

\AddToHook{cmd/appendix/before}{\def\cref@section@alias{appendix}}



\def\papertitle{
  A Fast Direct Solver for Boundary Integral Equations Using
  Quadrature By Expansion}
\def\papershorttitle{QBX Direct Solver}

\def\paperauthora{Alexandru Fikl}
\def\papershortauthora{A. Fikl}
\def\paperemaila{alexandru.fikl@e-uvt.ro}
\def\paperaddressa{Institute for Advanced Environmental Research (ICAM), West University of Timişoara, Bd. V. Pârvan nr. 4, 300223}

\def\paperauthorb{Andreas Klöckner}
\def\papershortauthorb{A. Klöckner}
\def\paperemailb{andreask@illinois.edu}
\def\paperaddressb{Department of Computer Science, University of Illinois at Urbana-Champaign, 201 N. Goodwin Ave, Urbana, IL 61801}



\bibliographystyle{spmpsci}
\smartqed



\newif\ifreview
\reviewfalse

\ifreview
    \usepackage[mathlines]{lineno}


    \linenumbers

    \NewDocumentCommand \afnote { O{TODO} m } {
        \textcolor{afColor}{\textbf{Alex} (\textbf{#1})}: #2 }
    \NewDocumentCommand \aknote { O{TODO} m } {
        \textcolor{akColor}{\textbf{Andreas} (\textbf{#1})}: #2}
\else
    \NewDocumentCommand \afnote { O{TODO} m } {}
    \NewDocumentCommand \aknote { O{TODO} m } {}
\fi

\definecolor{ReviewerA}{HTML}{0072BD}
\definecolor{ReviewerB}{HTML}{D95319}

\ifreview
    \NewDocumentCommand \reva { +m } {\textcolor{ReviewerA}{#1}}
    \NewDocumentCommand \revb { +m } {\textcolor{ReviewerB}{#1}}
\else
    \NewDocumentCommand \reva { +m } {#1}
    \NewDocumentCommand \revb { +m } {#1}
\fi


\begin{document}


\journalname{Journal of Scientific Computing}
\date{Receive: 23 May 2025 / Revised: ... / Accepted: ...}

\title{\papertitle}
\titlerunning{\papershorttitle}

\author{\paperauthora$^1$ \and \paperauthorb$^2$}
\authorrunning{\papershortauthora \and \papershortauthorb}

\institute{
    \paperauthora$^{*, 1}$
    \at \paperaddressa \\
    \email{\paperemaila} $^*$ corresponding author
    \and
    \paperauthorb$^{2}$
    \at \paperaddressb \\
    \email{\paperemailb}
}

\maketitle


\begin{abstract}
We construct and analyze a hierarchical direct solver for linear systems arising
from the discretization of boundary integral equations using the Quadrature by
Expansion (QBX) method. Our scheme builds on the existing theory of Hierarchical
Semi-Separable (HSS) matrix operators that contain low-rank off-diagonal submatrices.
We use proxy-based approximations of the far-field interactions and the Interpolative
Decomposition (ID) to construct compressed HSS operators that are used as fast
direct solvers for the original system. We describe a number of modifications to
the standard HSS framework that enable compatibility with the QBX family of
discretization methods. We establish an error model for the direct solver that
is based on a multipole expansion of the QBX-mediated proxy interactions and
standard estimates for the ID\@. Based on these theoretical results,
we develop an automatic approach for setting scheme parameters based on user-provided
error tolerances. The resulting solver seamlessly generalizes across two- and
three-dimensional problems and achieves state-of-the-art asymptotic scaling. We
conclude with numerical experiments that support the theoretical expectations for
the error and computational cost of the direct solver.
\end{abstract}

\keywords{
  integral equations
  \and fast algorithms
  \and direct solvers
  \and interpolative decomposition.
}

\subclass{
  31B10   
  \and 31C20   
  \and 35C15   
  \and 33C55   
  \and 42A10   
  \and 47B34   
  \and 65F05   
  \and 65R20.  
}

\section{Introduction}
\label{sc:introduction}

Linear elliptic boundary value problems frequently appear in models from
physics (e.g.,\ electrostatics), biology (e.g.,\ vesicle transport), engineering
(e.g.,\ wave scattering), and many others. For large families of homogeneous elliptic
equations the fundamental Green function solution is known, allowing them to be
solved by means of boundary integral methods. These methods lead to dense linear
systems that can, in many cases, be solved efficiently~\cite{Greengard1987,Greengard1997}
through the use of iterative algorithms. However, in cases where this is not possible,
we can take advantage of the special structure of these matrices to
construct efficient direct solvers. For example, Laplace and Helmholtz problems
produce matrices with low-rank off-diagonal blocks. These matrices can benefit
from compression and improved asymptotic cost, as has been shown through the
development of techniques such as $\mathcal{H}$~\cite{Hackbusch1999},
$\mathcal{H}^2$~\cite{Hackbusch2002}, hierarchical semi-separable
(HSS)~\cite{Chandrasekaran2006,Ho2014}, hierarchical off-diagonal low-rank
(HODLR)~\cite{Ambikasaran2013}.

We focus on the hierarchical semi-separable (HSS) class of matrices, which are
defined as being recursively low-rank in their off-diagonal blocks. This allows
constructing compression schemes that can achieve linear time in both two and three
dimensions~\cite{Cheng2005,Greengard2009,Martinsson2005}. Among these, an
efficient family of methods is based on the so-called ``proxy skeletonization'',
which makes use of the underlying PDE and the Interpolative Decomposition (ID)
to achieve $O(N)$ time algorithms~\cite{Gillman2012} in 2D and $O(N^{\frac{3}{2}})$
algorithms in 3D~\cite{Ho2012,Ho2013}. Three-dimensional decompositions have
also been extended to linear $O(N)$ algorithms in~\cite{Ho2016}. These algorithms
compete with iterative solvers driven by classically known Fast Multipole
Methods (FMM)~\cite{Greengard1987,Greengard1997} that achieve linear
time scaling, but tend to struggle for systems that have poorly conditioned discrete
forms.

In this work, we make the following contributions:
\begin{itemize}
\item
We develop an HSS compression scheme based on proxy skeletonization
for first- and second-kind operators discretized using the Quadrature by Expansion
(`QBX') method. In order to maintain accuracy, geometry processing and
formation of proxy expansions need to be adapted.
\item
In the development of our method, we introduce a novel near/far-field weighting scheme
that, empirically, reduces \revb{solution} error by about an order of magnitude (see \Cref{fig:results:target_weight_matrix})%
\revb{, compared to no usage of this additional weighting factor}.
This improvement is applicable independently of the use of QBX quadrature.
\item
Building on~\cite{Ye2020} for the two-dimensional case and~\cite{Xing2020a,Xing2020b}
for the three-dimensional case, we develop an end-to-end error bound for forward
compressed operator application. Our model accounts for the system matrix's block structure,
the separation for near  and far field, and the rank-based compression of far
field-contributions.
\item
We demonstrate the applicability of our modifications in the context of a recursive
direct solver in both two and three dimensions, for arbitrary geometries.
\item
To improve practical usability of the method, we develop a novel, theory-based approach
for choosing a number of free parameters (proxy count, proxy radius). We
provide empirical validation of these parameter choices on a number of operators across
multiple geometries in two and three dimensions.
This scheme is applicable independently of the use of QBX quadrature.
\item
We present extensive numerical evidence that supports our theoretical claims,
examining both details of the error theory and end-to-end estimates
for the forward operator application and system solution process, again
across dimensions, operators, and geometries.
\item
We provide open-source software implementing our methods as part of the
\texttt{pytential} library \cite{Pytential_2025_7395b97f} for the benefit of
the community.
\end{itemize}

The remainder of the paper is organized as follows. In~\Cref{sc:prelim}, we briefly
present the model problem, discuss its discretization using the QBX method, and
describe standard aspects of the Interpolative Decomposition used by the
direct solver. In~\Cref{sc:schemes}, we present the proxy-based skeletonization
of the linear system and the construction of the recursive direct solver.
In~\Cref{sc:errors}, we develop an error model for the single-level direct
solver approximation based on the multipole expansion and standard matrix
inequalities. Then, in~\Cref{sc:results}, we numerically validate the error
model on several representative problems in both two and three dimensions. Finally,
in~\Cref{sc:conclusions} we summarize the results and additional extensions and
open questions.

\section{Preliminaries}
\label{sc:prelim}

We focus on solving linear boundary integral equations of the form
\begin{equation} \label{eq:prelim:lp}
a(\vect{x})
\sigma(\vect{x}) +
\int_\Sigma K(\vect{x}, \vect{y}) \sigma(\vect{y}) \dx[S] = b(\vect{x}),
\end{equation}
that commonly appear in boundary integral methods. In the above equation, we take
$\vect{x}, \vect{y} \in \Sigma \subset \mathbb{R}^d$, for a smooth closed
hypersurface $\Sigma$, where $d \ge 2$. The boundary data $b: \Sigma \to \mathbb{R}$
is prescribed, and the equation is solved for the density $\sigma: \Sigma \to \mathbb{R}$.
For all equations of interest, $K: \mathbb{R}^d \times \mathbb{R}^d \to \mathbb{R}$
is a singular kernel and $a: \Sigma \to \mathbb{R}$ is a scaling factor that depends
on the choice of kernel $K$ (see~\cite{Kress2013}), typically via its jump relations.
The theoretical results presented in~\Cref{sc:errors} are based on the Laplace
Green function, which is given by
\begin{equation} \label{eq:prelim:laplace_kernel}
G^{\text{2D}}(\vect{x}, \vect{y}) \coloneqq
  -\frac{1}{2 \pi} \log \|\vect{x} - \vect{y}\|_2
\quad \text{and} \quad
G^{\text{3D}}(\vect{x}, \vect{y}) \coloneqq
  \frac{1}{4 \pi} \frac{1}{\|\vect{x} - \vect{y}\|_2},
\end{equation}
where $\|\cdot\|_2$ is the Euclidean norm. Representations of the
Dirichlet and Neumann boundary value problems for an elliptic operator
can be chosen as layer potentials \cite[Section 6.3]{Kress2013} in the
form~\eqref{eq:prelim:lp}. For example, the solutions to the Dirichlet problem
for the Laplace equation can be represented by a single-layer potential, so that
$a(\vect{x}) =0$ and $K(\vect{x}, \vect{y}) =G(\vect{x}, \vect{y})$,
or by a double-layer potential, where $a(\vect{x}) =\pm \sfrac{1}{2}$ and
$K(\vect{x}, \vect{y}) =\vect{n}_{\vect{y}} \cdot \nabla_{\vect{y}}
G(\vect{x}, \vect{y})$. \revb{In this case}, we write the
\reva{left-hand side} of the integral equations~\eqref{eq:prelim:lp} as
\begin{equation} \label{eq:prelim:layer_potential}
\begin{aligned}
\mathcal{S}[\sigma](\vect{x}) & \coloneqq
  \int_{\Sigma} G(\vect{x}, \vect{y}) \sigma(\vect{y}) \dx[S], \\
\pm \frac{1}{2} \sigma(\vect{x}) + \mathcal{D}[\sigma](\vect{x}) & \coloneqq
\pm \frac{1}{2} \sigma(\vect{x}) + \int_\Sigma
  \vect{n}_{\vect{y}} \cdot \nabla_{\vect{y}} G(\vect{x}, \vect{y}) \sigma(\vect{x}) \dx[S].
\end{aligned}
\end{equation}

The double-layer representation is generally preferred \revb{for Dirichlet problems}
because the resulting equation \eqref{eq:prelim:lp} becomes a Fredholm integral equation
of the second kind that is uniquely solvable for all right-hand sides
$b$~\cite[Theorem 6.23]{Kress2013}. Furthermore, Nyström-based methods result in
discrete operators with good convergence properties. On the other hand, the
single-layer representation results in a Fredholm integral equation of the first
kind that is known to be ill-posed \cite[Chapter 15]{Kress2013} and is solvable
only under specific conditions on the right-hand side $b$
\cite[Theorem 15.18]{Kress2013}. \revb{On the other hand, a single-layer
representation is preferred for the Neumann problem, where the double-layer representation
results in a hypersingular integral equation.} Assuming the continuous BVP is uniquely
solvable, this type of equation poses numerical issues and standard iterative
methods encounter significant difficulties.

Upon discretization, the linear system~\eqref{eq:prelim:lp} becomes finite-dimensional
and can be written generally as
\begin{equation} \label{eq:prelim:axb}
  \matr{A} \vect{\sigma} = \vect{b},
\end{equation}
where $\matr{A} \in \mathbb{R}^{n \times n}$ is a dense matrix. To obtain the
finite-dimensional system, we use a Nyström discretization, where the Quadrature
by Expansion (QBX)~\cite{Kloeckner2013} method is applied for singularity
handling. For most kernels of interest, the resulting matrix $\matr{A}$ is known
to be low-rank in its off-diagonal entries, but not necessarily well-conditioned,
e.g., in the case for certain high aspect ratio geometries. This motivates the
construction of blockwise direct solvers. A block decomposition is required to
take advantage of the off-diagonal low-rank structure of $\matr{A}$ and a direct
solver may be preferred (compared to iterative solvers) when $\matr{A}$ is
ill-conditioned.

Below we present a brief review of the QBX method and the necessary linear
algebra tools used to describe and analyze the direct solver. This includes
error estimates for the Interpolative Decomposition and multipole expansions.

\subsection{Kernel Expansions}
\label{ssc:prelim:addition}

A key component of local and multipole expansions in potential theory is an
appropriate addition theorem. We focus here on the Laplace kernel, for which
addition theorems are known in two dimensions~\cite{Greengard1987} and
three dimensions~\cite[Equation 3.16]{Greengard1997}. We expand here on the
three-dimensional case, for which we have that
\begin{equation} \label{eq:prelim:laplace_addition}
P_\ell(\hat{\vect{x}} \cdot \hat{\vect{y}}) =
  \frac{4 \pi}{2 \ell + 1} \sum_{m = -\ell}^\ell
  Y^m_\ell(\hat{\vect{x}}) Y^{-m}_\ell(\hat{\vect{y}}),
\end{equation}
where $\hat{\vect{x}}, \hat{\vect{y}} \in \mathbb{S}^2$ are two points on the
unit sphere and $P_\ell$ are the standard Legendre polynomials. The spherical
harmonics $Y^m_\ell$ of degree $\ell$ and order $m$ are given by
\begin{equation} \label{eq:prelim:spherical_harmonic}
Y^m_\ell(\hat{\vect{x}}) = Y^m_\ell(\theta, \phi) =
\sqrt{\frac{2 \ell + 1}{4 \pi} \frac{(\ell - m)!}{(\ell + m)!}}
e^{\imath m \phi} P^m_\ell(\cos \theta),
\end{equation}
where $P^m_\ell$ are the associated Lagrange functions~\cite{Greengard1997}. The
spherical harmonics are normalized such that they form an orthonormal basis of
$L^2(\mathbb{S}^2)$. With this normalization, we have that~\cite[Corollary 2.9]{Stein1971}
\begin{equation} \label{eq:prelim:sph_bound}
\left|
\sum_{m = -\ell}^\ell Y^m_\ell(\hat{\vect{x}}) Y^{-m}_\ell(\hat{\vect{y}})
\right|
= \left|\frac{2 \ell + 1}{4 \pi} P_\ell(\hat{\vect{x}} \cdot \hat{\vect{y}})\right|
\le \frac{2 \ell + 1}{4 \pi}.
\end{equation}

Then, we write the series expansion of the free-space Laplace Green function as
\begin{equation} \label{eq:prelim:laplace_expansion}
G(\vect{x}, \vect{y}) =
\frac{1}{4 \pi} \sum_{\ell = 0}^\infty
  \frac{r_y^\ell}{r_x^{\ell + 1}} P_\ell(\hat{\vect{x}} \cdot \hat{\vect{y}}) =
\sum_{\ell = 0}^\infty \sum_{m = -\ell}^\ell
  \frac{1}{2 \ell + 1}
  \frac{r_y^\ell}{r_x^{\ell + 1}}
  Y^m_\ell(\hat{\vect{x}}) Y^{-m}_\ell(\hat{\vect{y}}),
\end{equation}
where $\hat{\vect{x}} \coloneqq \vect{x} / r_x$ and $r_x \coloneqq \|\vect{x}\|_2$
(equivalently for $\vect{y}$). This expansion is valid for all $\vect{x}, \vect{y}
\in \mathbb{R}^3$ as long as $r_y < r_x$. On the sphere of radius $R$, the
Green's function solution to the exterior Dirichlet Laplace problem is given by
the Poisson kernel~\cite[Theorem 1.9]{Stein1971}
\begin{equation} \label{eq:prelim:poisson}
P(\vect{x}, \vect{p}) =
  \frac{1}{4 \pi R} \frac{R^2 - \|\vect{x}\|_2^2}{\|\vect{x} - \vect{p}\|_2^3},
\end{equation}
where $\vect{x} \in \mathbb{R}^3 \setminus \mathbb{B}^2_{R}$ (the exterior of the
ball) and $\vect{p} \in \mathbb{S}^2_{R}$. Following~\cite[Proposition 1.9]{Atkinson2012}, the Poisson
kernel can also be expanded in terms of spherical harmonics as
\begin{equation} \label{eq:prelim:poisson_expansion}
P(\vect{x}, \vect{p}) =
\sum_{\ell = 0}^\infty
  \frac{2 \ell + 1}{4 \pi} \frac{R^{\ell - 1}}{r_y^{\ell + 1}}
  P_\ell(\hat{\vect{x}} \cdot \hat{\vect{p}})
=
\sum_{\ell = 0}^\infty \sum_{m = -\ell}^\ell
  \frac{R^{\ell - 1}}{r_x^{\ell + 1}}
  Y^m_\ell(\hat{\vect{x}}) Y^{-m}_\ell(\hat{\vect{p}}),
\end{equation}
where $\vect{p}$ is positioned on a sphere of radius $R$ and $r_x > R$.
Moreover, we have the \emph{Poisson integral identity}~\cite[Theorem 1.10]{Stein1971}
which uniquely defines a harmonic function $g: \mathbb{R}^3 \setminus
\mathbb{B}^2_{R} \to \mathbb{R}$ by
\begin{equation} \label{eq:prelim:poisson_integral}
g(\vect{x}) = \int_{\mathbb{S}^2_{R}}
  P(\vect{x}, \vect{p}) g(\vect{p}) \dx[S_{\vect{p}}],
\end{equation}
as obtained by solving the Dirichlet problem on the exterior of the
unit sphere. These definitions are used in~\Cref{ssc:errors:multipole} to
construct an error estimate for the direct solver based on standard multipole
estimates.

\subsection{Quadrature by Expansion (QBX)}
\label{ssc:prelim:qbx}

The central principle of the QBX method is the observation that the layer
potential is locally smooth away from the boundary. On the one hand, this allows the
use of standard high-order quadrature to accurately evaluate the solution at
points $\vect{x} \notin \Sigma$. More importantly, it allows the construction of
an appropriate ``local expansion'' (e.g., a Taylor expansion for the Laplace kernel)
that can approximate the solution in a neighborhood of the boundary. This
expansion can be used to evaluate the layer potential both near the surface
\cite{Barnett2014} and on-surface~\cite{Kloeckner2013} to high-order accuracy
and in a manner that generalizes straightforwardly across kernels and dimensions.

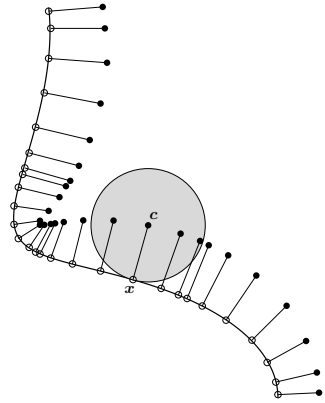
\begin{figure}[ht!]
  \centering
  \resizebox{0.35\linewidth}{!}{
  \begin{tikzpicture}
  \draw[ultra thick, domain=0.0:51.6, samples=512]
    plot ({5.00*(4+cos(6*\x)) * cos(\x)},
          {5.00*(4+cos(6*\x)) * sin(\x)});

  \draw[fill=gray!30, opacity=10]
      (19.0, 8.03) circle [radius=2.64];
  \node at (19.2, 8.23)
        [above] {{ \huge $\boldsymbol{{c}}$ }};
  \node at (18.1, 5.3)
      [below] {{ \huge $\boldsymbol{{x}}$ }};
  \draw[thick] (25.0, 0.143) -- (26.9, 0.231);
  \draw[black] (25.0, 0.143) circle [radius=0.15];
  \fill[black] (26.9, 0.231) circle [radius=0.15];

  \draw[thick] (24.9, 0.725) -- (26.8, 1.17);
  \draw[black] (24.9, 0.725) circle [radius=0.15];
  \fill[black] (26.8, 1.17) circle [radius=0.15];

  \draw[thick] (24.5, 1.64) -- (26.3, 2.64);
  \draw[black] (24.5, 1.64) circle [radius=0.15];
  \fill[black] (26.3, 2.64) circle [radius=0.15];

  \draw[thick] (23.8, 2.68) -- (25.4, 4.28);
  \draw[black] (23.8, 2.68) circle [radius=0.15];
  \fill[black] (25.4, 4.28) circle [radius=0.15];

  \draw[thick] (22.6, 3.61) -- (24.0, 5.69);
  \draw[black] (22.6, 3.61) circle [radius=0.15];
  \fill[black] (24.0, 5.69) circle [radius=0.15];

  \draw[thick] (21.5, 4.27) -- (22.7, 6.63);
  \draw[black] (21.5, 4.27) circle [radius=0.15];
  \fill[black] (22.7, 6.63) circle [radius=0.15];

  \draw[thick] (20.8, 4.62) -- (21.8, 7.1);
  \draw[black] (20.8, 4.62) circle [radius=0.15];
  \fill[black] (21.8, 7.1) circle [radius=0.15];

  \draw[thick] (20.4, 4.79) -- (21.4, 7.3);
  \draw[black] (20.4, 4.79) circle [radius=0.15];
  \fill[black] (21.4, 7.3) circle [radius=0.15];

  \draw[thick] (19.6, 5.09) -- (20.5, 7.64);
  \draw[black] (19.6, 5.09) circle [radius=0.15];
  \fill[black] (20.5, 7.64) circle [radius=0.15];

  \draw[thick] (18.3, 5.5) -- (19.0, 8.03);
  \draw[black] (18.3, 5.5) circle [radius=0.15];
  \fill[black] (19.0, 8.03) circle [radius=0.15];

  \draw[thick] (16.8, 5.9) -- (17.4, 8.25);
  \draw[black] (16.8, 5.9) circle [radius=0.15];
  \fill[black] (17.4, 8.25) circle [radius=0.15];

  \draw[thick] (15.5, 6.23) -- (16.0, 8.26);
  \draw[black] (15.5, 6.23) circle [radius=0.15];
  \fill[black] (16.0, 8.26) circle [radius=0.15];

  \draw[thick] (14.5, 6.5) -- (15.1, 8.18);
  \draw[black] (14.5, 6.5) circle [radius=0.15];
  \fill[black] (15.1, 8.18) circle [radius=0.15];

  \draw[thick] (14.0, 6.69) -- (14.7, 8.12);
  \draw[black] (14.0, 6.69) circle [radius=0.15];
  \fill[black] (14.7, 8.12) circle [radius=0.15];

  \draw[thick] (13.8, 6.78) -- (14.5, 8.09);
  \draw[black] (13.8, 6.78) circle [radius=0.15];
  \fill[black] (14.5, 8.09) circle [radius=0.15];

  \draw[thick] (13.5, 7.0) -- (14.2, 8.05);
  \draw[black] (13.5, 7.0) circle [radius=0.15];
  \fill[black] (14.2, 8.05) circle [radius=0.15];

  \draw[thick] (13.0, 7.41) -- (14.0, 8.05);
  \draw[black] (13.0, 7.41) circle [radius=0.15];
  \fill[black] (14.0, 8.05) circle [radius=0.15];

  \draw[thick] (12.8, 8.07) -- (14.0, 8.24);
  \draw[black] (12.8, 8.07) circle [radius=0.15];
  \fill[black] (14.0, 8.24) circle [radius=0.15];

  \draw[thick] (12.8, 8.93) -- (14.4, 8.7);
  \draw[black] (12.8, 8.93) circle [radius=0.15];
  \fill[black] (14.4, 8.7) circle [radius=0.15];

  \draw[thick] (13.0, 9.8) -- (14.9, 9.34);
  \draw[black] (13.0, 9.8) circle [radius=0.15];
  \fill[black] (14.9, 9.34) circle [radius=0.15];

  \draw[thick] (13.2, 10.4) -- (15.2, 9.85);
  \draw[black] (13.2, 10.4) circle [radius=0.15];
  \fill[black] (15.2, 9.85) circle [radius=0.15];

  \draw[thick] (13.3, 10.7) -- (15.4, 10.1);
  \draw[black] (13.3, 10.7) circle [radius=0.15];
  \fill[black] (15.4, 10.1) circle [radius=0.15];

  \draw[thick] (13.5, 11.4) -- (15.8, 10.8);
  \draw[black] (13.5, 11.4) circle [radius=0.15];
  \fill[black] (15.8, 10.8) circle [radius=0.15];

  \draw[thick] (13.8, 12.6) -- (16.3, 12.0);
  \draw[black] (13.8, 12.6) circle [radius=0.15];
  \fill[black] (16.3, 12.0) circle [radius=0.15];

  \draw[thick] (14.2, 14.2) -- (16.8, 13.7);
  \draw[black] (14.2, 14.2) circle [radius=0.15];
  \fill[black] (16.8, 13.7) circle [radius=0.15];

  \draw[thick] (14.4, 15.8) -- (17.1, 15.6);
  \draw[black] (14.4, 15.8) circle [radius=0.15];
  \fill[black] (17.1, 15.6) circle [radius=0.15];

  \draw[thick] (14.5, 17.2) -- (17.0, 17.2);
  \draw[black] (14.5, 17.2) circle [radius=0.15];
  \fill[black] (17.0, 17.2) circle [radius=0.15];

  \draw[thick] (14.4, 18.0) -- (16.9, 18.2);
  \draw[black] (14.4, 18.0) circle [radius=0.15];
  \fill[black] (16.9, 18.2) circle [radius=0.15];
  \end{tikzpicture}
  }

  \caption{Expansion centers $\vect{c}$ (full) and corresponding target
    points $\vect{x}$ (empty) with the corresponding expansion disk (gray).}
  \label{fig:prelim:qbx}
\end{figure}

The expansions are constructed in a neighborhood of the boundary
(see~\cite{Kloeckner2013} for details) for every target point $\vect{x} \in \Sigma$
(see~\Cref{fig:prelim:qbx}). For evaluation, the following steps are taken:
\begin{enumerate}
  \item Pick an expansion center $\vect{c}$ and an expansion radius $r$.
  In practice, the radius $r$ is chosen based on the local geometry by
  satisfying~\cite[Theorem 1]{Kloeckner2013}. Using the exterior normal vector
  $\vect{n}$, we can then write
  \[
  \vect{c} \coloneqq \vect{x} \pm r \vect{n}.
  \]
  \item Compute an expansion of the potential at $\vect{c}$ up to a prescribed order
  $p_{\text{qbx}}$.
  \item Evaluate the layer potential at $\vect{x}$, mediated through the
  expansion.
\end{enumerate}

\reva{
An explicit expression for the resulting local expansion can be found
in~\cite{Wala2020}. We use this expression to write out the elements of the
matrix operator $\matr{A}$ using the notation from~\eqref{eq:prelim:laplace_expansion} for the case of the single-layer
potential used in \Cref{sc:errors}. It is given by
\begin{equation} \label{eq:matrix_entries_slp}
A_{ij} = \tilde{G}(\vect{x}_i, \vect{y}_j) w_j \coloneqq
  \sum_{\ell = 0}^{p_{\text{qbx}}} \sum_{m = -\ell}^\ell
    L^m_\ell(\vect{y}_j)
    \|\vect{x}_i - \vect{c}_i\|^n
    Y^m_\ell(\widehat{\vect{x}_i - \vect{c}_i}),
\end{equation}
where $w_j$ are the quadrature weights and the \emph{local coefficients} $L^m_\ell$
are given by
\[
L^m_\ell(\vect{y}_j) = \frac{1}{2 \ell + 1}
  \frac{1}{\|\vect{y}_j - \vect{c}_i\|^{n + 1}} Y^{-m}_\ell(\widehat{\vect{y}_j - \vect{c}_i}) w_j.
\]
The above makes use of spherical harmonic expansions of the potential, which matches
our experiments in the three-dimensional case in \Cref{sc:results}.
For other use cases, QBX itself as well as the direct solver described here
are compatible with many other expansion types, including Fourier-Bessel and Cartesian
Taylor.
}

This is known as a ``global'' QBX method, since all interactions with the
target point $\vect{x}$ are mediated through the expansion. An analysis of
the method is presented in~\cite{Kloeckner2013} and extensions
to three dimensions are given in~\cite{Wala2019}. To obtain
accurate results at each discretization node, the newest variant of the
method employs adaptive algorithms over multiple grids obtained by selective refinement.
They are the \stageone{} grid, which is refined to eliminate errors from interfering
source geometry (see~\cite[Section 3.3]{Wala2019}), the \stagetwo{} grid, which is
refined to obtain sufficient quadrature resolution (see~\cite[Section 3.4]{Wala2019}),
and \stagequad{}, which optionally oversamples the quadrature of \stagetwo{}.

For a standard application of the QBX method, the target points $\vect{x}$ are
taken on the \stageone{} discretization and the source points $\vect{y}$ are
taken on the \stagequad{} discretization. This ensures that the solution is
obtained on a consistent mesh and that the layer potential is evaluated with
sufficient accuracy. However, the resulting discrete operator on these grids
is non-square and would require a least squares solver, such as~\cite{Ho2014}. In
\Cref{ssc:schemes:qbx} we discuss the construction of a square operator for the
QBX method and the necessary trade-offs.

\subsection{Interpolative Decomposition}
\label{ssc:prelim:ds}

The direct solver is based on a recursive compression (or ``skeletonization'')
of the off-diagonal blocks of the matrix $\matr{A}$ from~\eqref{eq:prelim:axb}. We
give here the basic notion that is required to construct the compressed
representation. First, the matrix entries are grouped into $N + 1$ contiguous
blocks of size $n_i$, such that $n = n_0 + \cdots + n_N$. For each block,
we construct index sets $\{I_i\}_{i = 0}^N$ and $\{J_i\}_{i = 0}^N$ for the rows
\reva{and} columns, respectively. An element $I_i \coloneqq (k_0, \dots, k_{n_i - 1})$
(or $J_i$) represents an ordered tuple of indices into the rows (or columns,
respectively) of the matrix $\matr{A}$. In this block form, the
system~\eqref{eq:prelim:axb} can be written as
\[
\sum_{j = 0}^N \matr{A}_{I_i, J_j} \vect{\sigma}_{J_j} = \vect{b}_{I_i},
\]
for $i \in \{0, \dots, N\}$, where, e.g., $\vect{b}_{I_i} = [b_{k_0}, \dots, b_{k_{n_i - 1}}]$
denotes the subset of the entries of $\vect{b}$ from the index tuple $I_i$. In the
following, we adopt the convention that the matrix blocks $\matr{A}_{ij} \equiv
\matr{A}_{I_i, J_j}, \vect{\sigma}_j \equiv \vect{\sigma}_{J_j}$ and $\vect{b}_i
\equiv \vect{b}_{I_i}$. To construct a direct solver for this system, we will
make use of the block separability of the $\matr{A}$ matrix, as defined below.

\begin{definition}[{Block Separability \cite[Section 3]{Gillman2012}}]
\label{def:prelim:separable}
A matrix $\matr{A}$ is said to be block separable if every off-diagonal block
$\matr{A}_{ij}$, with $i \ne j$, can be decomposed as a product of three low-rank
matrices
\begin{equation} \label{eq:prelim:separable}
\matr{A}_{ij} = \matr{L}_i \matr{S}_{ij} \matr{R}_j,
\end{equation}
where $\matr{L}_i \in \mathbb{R}^{n_i \times k_i}, \matr{S}_{ij} \in
\mathbb{R}^{k_i \times k_j}$ and $\matr{R}_j \in \mathbb{R}^{k_j \times n_j}$
for $k_i, k_j \ll \min(n_i, n_j)$.
\end{definition}

To obtain such a decomposition, we consider the Interpolative Decomposition (ID)
as a means of reconstructing $\matr{A}_{ij}$ as linear combinations
of outer products of a column subset in $\matr{L}_i$ and a row subset
in $\matr{R}_j$, with coefficients in the $\matr{S}_{ij}$ factors.
The existence of a single $\matr{L}_i$ and $\matr{R}_j$ for each entire block row and
entire block column of $\matr{A}$ is not given for other types of block structured matrices,
e.g.,\ $\mathcal{H}$ matrices, and is a central assumption for block separability.
The interpolative decomposition is defined in the following
(see~\cite{Cheng2005,Gillman2012,Martinsson2005} and the references therein).

\begin{theorem}[{Interpolative Decomposition \cite[Theorem 1]{Martinsson2005}}]
\label{thm:prelim:id}
Let $\matr{A} \in \mathbb{R}^{m \times n}$ be an arbitrary matrix and
$1 \le k < \min(m, n)$. Then, there exist a matrix $\matr{R} \in
\mathbb{R}^{k \times n}$ (not necessarily unique) and a matrix $\matr{S} \in
\mathbb{R}^{m \times k}$, which contains a subset of the columns of $\matr{A}$,
and a permutation matrix $\matr{P} \in \mathbb{R}^{n \times n}$, such that
\[
\|\matr{A} - \matr{S} \matr{R} \matr{P}\|_2 \le \sigma_{k + 1} \sqrt{1 + k (n - k)},
\]
where $|R_{ij}| \le 1$ and $\sigma_{k + 1}$ is the $(k + 1)$-th largest singular
value of $\matr{A}$.
\end{theorem}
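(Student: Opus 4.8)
The statement is the classical existence result for the interpolative decomposition, and the standard route is to obtain a so-called "strongly rank-revealing" column-pivoted factorization and then read off the interpolation matrix from it. I would proceed as follows.

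First, I would reduce to a statement about column selection: the goal is to exhibit a permutation $\matr{P}$ so that, writing $\matr{A}\matr{P}^{-1} = [\matr{A}_1 \;\; \matr{A}_2]$ with $\matr{A}_1 \in \mathbb{R}^{m\times k}$, we can take $\matr{S} = \matr{A}_1$ and build $\matr{R} = [\,\matr{I}_k \;\; \matr{T}\,]$ with a coefficient block $\matr{T}$ satisfying $\|\matr{T}\|_{\max}\le 1$; then $\|\matr{A} - \matr{S}\matr{R}\matr{P}\|_2 = \|\matr{A}_2 - \matr{A}_1\matr{T}\|_2$ is exactly the reconstruction error for the non-skeleton columns. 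This reframes the theorem as: choose $k$ columns of $\matr{A}$ such that the remaining columns are expressed in terms of them with bounded coefficients and small residual.

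Second, I would invoke the strong rank-revealing QR (sRRQR) machinery of Gu--Eisenstat. The key fact is that one can always find a column permutation for which the triangular factor $\matr{R} = \begin{psmallmatrix}\matr{R}_{11} & \matr{R}_{12}\\ 0 & \matr{R}_{22}\end{psmallmatrix}$ has the property that every entry of $\matr{R}_{11}^{-1}\matr{R}_{12}$ is bounded in magnitude by a constant (here $1$ after the appropriate tightening of the pivoting, using a swap-based greedy argument that terminates because the determinant of the leading block strictly increases) and simultaneously $\sigma_{\min}(\matr{R}_{11})$ and $\sigma_{\max}(\matr{R}_{22})$ are controlled relative to the singular values of $\matr{A}$. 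Concretely, $\sigma_{\max}(\matr{R}_{22}) \le \sigma_{k+1}(\matr{A})\sqrt{1 + k(n-k)}$. Setting $\matr{T} = \matr{R}_{11}^{-1}\matr{R}_{12}$, the residual of the ID is $\matr{Q}\matr{R}_{22}$ in the non-skeleton columns (with $\matr{Q}$ orthogonal), whose spectral norm is $\sigma_{\max}(\matr{R}_{22})$, yielding the stated bound. The entrywise bound $|R_{ij}|\le 1$ is immediate since the skeleton part of $\matr{R}$ is the identity and the rest is $\matr{T}$.

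The main obstacle — and the only genuinely nontrivial ingredient — is establishing the combined bound $\sigma_{\max}(\matr{R}_{22}) \le \sigma_{k+1}\sqrt{1+k(n-k)}$ together with $\|\matr{T}\|_{\max}\le 1$; these two requirements are in tension (aggressive pivoting to shrink $\matr{R}_{22}$ can inflate $\matr{T}$ and vice versa), and resolving it requires the finite-termination swapping argument of Gu--Eisenstat showing that a permutation satisfying both exists. I would therefore either cite \cite{Martinsson2005,Cheng2005} for this lemma directly, or sketch the swap argument: start from any column-pivoted QR, and repeatedly swap a skeleton column with a non-skeleton column whenever doing so strictly increases $|\det \matr{R}_{11}|$ by more than a factor; this process halts, and at the fixed point the desired bounds hold. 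The remaining steps — translating the triangular-factor bounds into the $\matr{S}\matr{R}\matr{P}$ form and verifying the norm identities — are routine linear algebra.
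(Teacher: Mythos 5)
Your proposal is correct and is essentially the argument behind the cited result: the paper itself offers no proof of \Cref{thm:prelim:id}, deferring entirely to \cite[Theorem 1]{Martinsson2005} (and \cite{Cheng2005}), whose proof is exactly the strong rank-revealing QR / maximal-volume column-selection route you sketch, with the residual identified as $\|\matr{R}_{22}\|_2$ and the Gu--Eisenstat bound $\sigma_{\max}(\matr{R}_{22}) \le \sigma_{k+1}\sqrt{1+k(n-k)}$ together with $|(\matr{R}_{11}^{-1}\matr{R}_{12})_{ij}|\le 1$. The only refinement worth noting is that to get the entrywise bound exactly $1$ (rather than a factor $f>1$) one argues existence via the global maximizer of $|\det\matr{R}_{11}|$ over all column subsets rather than via a terminating swap loop, a subtlety you already flag.
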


Theoretical guarantees on the use of the ID to directly obtain a factorization of the
form~\eqref{eq:prelim:separable} can be found in~\cite[Theorem 3]{Cheng2005}.
However, in practice, we construct matrices that approximate the row and
column space of $\matr{A}_{ij}$ and apply the ID separately through~\Cref{thm:prelim:id}.
This construction is described in \Cref{ssc:schemes:proxy}.

\section{Schemes for Coupling QBX with a Direct Solver}
\label{sc:schemes}

We briefly review the construction of a direct solver by taking advantage of the
block separability of $\matr{A}$. In particular, if $\matr{A}$ is block separable
in the sense of \Cref{def:prelim:separable}, we can write
\begin{equation} \label{eq:schemes:skeletonization}
  \matr{A} \approx \matr{A}_\epsilon \coloneqq \matr{D} + \matr{L} \matr{S} \matr{R},
\end{equation}
where $\matr{D}, \matr{L}$ and $\matr{R}$ are block diagonal matrices corresponding
to the diagonal of $\matr{A}$ and the interpolation matrices constructed from
the ID, as described in \Cref{thm:prelim:id}, i.e.,
\[
\matr{D} =
\begin{bmatrix}
\matr{A}_{00} & & \\
& \ddots & \\
& & \matr{A}_{NN}
\end{bmatrix}, \quad
\matr{L} =
\begin{bmatrix}
\matr{L}_{0} & & \\
& \ddots & \\
& & \matr{L}_{N}
\end{bmatrix}, \quad
\matr{R} =
\begin{bmatrix}
\matr{R}_{0} & & \\
& \ddots & \\
& & \matr{R}_{N}
\end{bmatrix}.
\]

The ``skeleton'' matrix $\matr{S} \in \mathbb{R}^{k \times k}$, where
$k \coloneqq k_0 + \cdots + k_N$, has zero diagonal blocks and the off-diagonal
elements consist of entries of the original dense matrix $\matr{A}$,
i.e.,
\[
\matr{S} =
\begin{bmatrix}
0 & \matr{S}_{01} & \cdots & \matr{S}_{0N} \\
\matr{S}_{10} & 0 & \cdots & \matr{S}_{1N} \\
\vdots & \vdots & \ddots & \vdots \\
\matr{S}_{N0} & \matr{S}_{N1} & \cdots & 0
\end{bmatrix},
\]
where $\matr{S}_{ij}$ is a subset of $\matr{A}_{ij}$ given
by~\Cref{thm:prelim:id}. Furthermore, $\matr{S}$ is also block separable,
which allows constructing a bottom-up recursive decomposition, as described
in~\Cref{ssc:schemes:skeletonization} and shown in~\Cref{fig:schemes:recursion}.
Using the above decomposition, we can construct a \reva{fast} approximate direct
solver (see~\cite{Gillman2012}). We introduce the extended system
\[
\begin{bmatrix}
    \matr{D} & \matr{L} \matr{S} \\
    -\matr{R} & \matr{I}
\end{bmatrix}
\begin{bmatrix}
    \vect{\sigma} \\
    \hat{\vect{\sigma}}
\end{bmatrix}
=
\begin{bmatrix}
    \vect{b} \\
    \vect{0}
\end{bmatrix}
\]
and applying a Schur complement to solve the smaller system for $\hat{\vect{\sigma}}$
instead. Multiplying the first row by $\matr{R} \matr{D}^{-1}$ gives the following
modified system
\[
\begin{bmatrix}
    \matr{D} & \matr{L} \matr{S} \\
    \matr{R} & \hat{\matr{D}}^{-1} \matr{S} \\
    -\matr{R} & \matr{I}
\end{bmatrix}
\begin{bmatrix}
    \vect{\sigma} \\
    \hat{\vect{\sigma}}
\end{bmatrix}
=
\begin{bmatrix}
    \vect{b} \\
    \hat{\vect{b}} \\
    \vect{0}
\end{bmatrix},
\]
where $\hat{\vect{b}} \coloneqq \matr{R} \matr{D}^{-1} \vect{b}$ and
$\hat{\matr{D}} \coloneqq (\matr{R} \matr{D}^{-1} \matr{L})^{-1}$. Note that
$\hat{\matr{D}} \in \mathbb{R}^{k \times k}$ is also a block diagonal matrix
with blocks $\hat{\matr{D}}_{ii} \coloneqq (\matr{R}_i \matr{D}_{ii}^{-1}
\matr{L}_i)^{-1} \in \mathbb{R}^{k_i \times k_i}$. Then, by adding the second
and the third rows, we can solve the (dense) reduced system
\begin{equation} \label{eq:schemes:reduced}
(\hat{\matr{D}} + \matr{S}) \hat{\vect{\sigma}} = \hat{\matr{D}} \hat{\vect{b}}
\end{equation}
and obtain the solution to the original system from
\begin{equation} \label{eq:schemes:original}
\matr{D} \vect{\sigma} = \vect{b} - \matr{L} \matr{S} \hat{\vect{\sigma}},
\end{equation}
which only requires the solution of a block diagonal linear system. This
compressed solver (summarized in \Cref{alg:schemes:single_level}) can be shown
to have an approximate cost of~\cite[Remark 3.1]{Gillman2012}
\begin{equation} \label{eq:schemes:single_level_complexity}
T = c_{\text{offline}} + c_{\text{solve}} + c_{\text{apply}} =
  O\left(\frac{n^2}{N^2} k + \frac{n^3}{N^2}\right)
  + O\left(k^3\right)
  + O\left(\frac{n^2}{N} + k^2\right),
\end{equation}
where $c_{\text{offline}}$ is the preprocessing cost of constructing $\matr{L},
\matr{R}$ and $\hat{\matr{D}}$ (using the methods from~\Cref{ssc:schemes:proxy}),
$c_{\text{solve}}$ is the cost of solving the reduced system~\eqref{eq:schemes:reduced},
and $c_{\text{apply}}$ is the cost of applying the compressed matrices
in~\eqref{eq:schemes:original}. We have assumed here
that the ID for each block can be computed in $O(n_i^2 k_i)$ as described
in~\cite[Remark 5]{Cheng2005}. If solved to a given tolerance $\epsilon$,
the compressed approximate direct solver is asymptotically faster than an exact
LU-based direct solver.

\begin{center}
\vspace{-0.75cm}
\begin{minipage}{0.7\linewidth}
\begin{algorithm}[H]
\caption{\sc Single-Level Compressed Inverse}
\label{alg:schemes:single_level}
\begin{algorithmic}[1]
\For{$i = 0, \dots N$}
  \State $\hat{\vect{b}}_i \gets
    \hat{\matr{D}}_{ii} \matr{R}_i \matr{D}_{ii}^{-1} \vect{b}_i$
\EndFor
\State $\hat{\vect{\sigma}} \gets
  \operatorname{Solve}(\hat{\matr{D}} + \matr{S}, \hat{\matr{D}} \hat{\vect{b}})$
  \Comment{Solve \eqref{eq:schemes:reduced}}
\For{$i, j = 0, \dots, N, i \neq j$}
  \State $\bar{\vect{\sigma}}_i \gets
    \bar{\vect{\sigma}}_i + \matr{S}_{ij} \hat{\vect{\sigma}}_j$
    \Comment{Prepare for \eqref{eq:schemes:original}}
\EndFor
\For{$i = 0, \dots, N$}
  \State $\vect{\sigma}_i \gets
    \matr{D}_{ii}^{-1}(\vect{b}_i + \matr{L}_i \bar{\vect{\sigma}}_i)$
    \Comment{Solve \eqref{eq:schemes:original}}
\EndFor
\end{algorithmic}
\end{algorithm}
\end{minipage}
\end{center}

\subsection{Cluster Construction}
\label{ssc:schemes:clusters}

To construct the single-level compression scheme described in the previous section,
we first require a method to construct the index tuples $\{I_i\}_{i = 0}^N$
and $\{J_i\}_{i = 0}^N$. The index tuples are typically selected based on geometric
proximity. We use a quadtree and an octree in two and three dimensions, respectively,
to obtain such subsets. This is a convenient choice that straightforwardly generalizes
to higher dimensions. However, binary trees have also been
used successfully in two dimensions~\cite{Gillman2012}. In the discretization
of~\eqref{eq:prelim:lp}, we consider the target points $X$ and the source points
$Y$, which correspond to the discretization of the geometry $\Sigma$ according
to~\Cref{ssc:prelim:qbx}. From the index tuples, we obtain a partition
$\{X_i\}_{i = 0}^N$ and $\{Y_i\}_{i = 0}^N$ of all the nodes. Via a renumbering,
the index tuples $\{I_i\}$ and $\{J_i\}$ are chosen to be contiguous in order
to obtain the block form presented in \Cref{ssc:prelim:ds}.

\begin{remark}
As discussed in~\Cref{ssc:schemes:qbx}, the sets $X$ and $Y$ are taken to be
the same to avoid a non-square matrix $\matr{A}$. Therefore, the initial partition
of the geometry is only performed once.
\end{remark}

The construction is similar to~\cite{Greengard2009} and \cite{Ho2012}. We bin
sort the centroids of each element in the geometry using the corresponding tree.
The tree is refined until a maximum predefined number of centroids is found in
each leaf. We require that adjacent nodes in the tree have
side lengths that differ by at most a factor of $2$ (known as 2:1 balancing),
to ensure that neighboring clusters are not too far apart in the tree hierarchy.
Each index tuple $I_i$ (or $J_i$) is then constructed from a numbering of the
nodes of the elements whose centroids are contained in a leaf. As such, the
number of index tuples $N + 1$ corresponds to the number of leaves in the
quadtree (or octree).

\subsection{Proxy Skeletonization}
\label{ssc:schemes:proxy}

The remaining requirement for performing the single-level compression from the
previous section is a procedure to construct the interpolation matrices
$\matr{L}$ and $\matr{R}$. We follow here the construction based on
proxy skeletonization shown in~\cite{Ho2012}. The basic idea behind this procedure
can be performed on a block-by-block basis as described below.

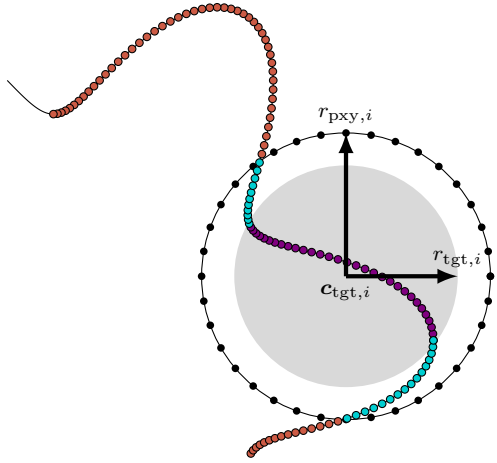
\begin{figure}[ht!]
\centering
\begin{tikzpicture}
\coordinate (Center) at (3.85, 0.85);
\fill[white] (Center) circle [radius=1.9];
\fill[gray!30] (Center) circle [radius=1.47];
\draw (Center) circle [radius=1.9];

\draw[domain=-30:100,samples=256] plot (
    {(4+cos(6*\x)) * cos(\x)},
    {(4+cos(6*\x)) * sin(\x)});

\foreach \x in {0, 10, ..., 360} {
    \fill[qProxy] ({3.85 + 1.9*cos(\x)}, {0.85 + 1.9*sin(\x)}) circle [radius=0.05];
}
\foreach \x in {0, 1, ..., 30} {
    \draw[fill=qCluster] (
    {(4+cos(6*\x)) * cos(\x)},
    {(4+cos(6*\x)) * sin(\x)}) circle [radius=0.05];
}
\foreach \x in {31, 32, ..., 41} {
    \draw[fill=qNear] (
    {(4+cos(6*\x)) * cos(\x)},
    {(4+cos(6*\x)) * sin(\x)}) circle [radius=0.05];
}
\foreach \x in {-15, -14, ..., 0} {
    \draw[fill=qNear] (
    {(4+cos(6*\x)) * cos(\x)},
    {(4+cos(6*\x)) * sin(\x)}) circle [radius=0.05];
}
\foreach \x in {42, 43, ..., 90} {
    \draw[fill=qFar] (
    {(4+cos(6*\x)) * cos(\x)},
    {(4+cos(6*\x)) * sin(\x)}) circle [radius=0.05];
}

\foreach \x in {-30, -29, ..., -16} {
    \draw[fill=qFar] (
    {(4+cos(6*\x)) * cos(\x)},
    {(4+cos(6*\x)) * sin(\x)}) circle [radius=0.05];
}

\draw[ultra thick,-latex] (Center) -- ++(right:1.47) node [above] {$\rtgt$};
\draw[ultra thick,-latex] (Center) -- ++(up:1.9) node [above] {$\rpxy$};
\node[below] at (Center) {$\ctgt$};
\end{tikzpicture}
\caption{Target points $X_i$ (\textcolor{qCluster}{purple}) inside the gray disk
  of radius $\rtgt$ centered at $\ctgt$, near source point
  $Y^{(\text{near})}_i$ (\textcolor{qNear}{cyan}) inside the proxy ball of radius
  $\rpxy$, far source points $Y^{(\text{far})}_i$ (\textcolor{qFar}{red}) outside
  the proxy ball, and proxy points $P_i$ (\textcolor{qProxy}{black}) on the circle.}
\label{fig:schemes:proxy}
\end{figure}

Locally, the geometry we are interested in is depicted in~\Cref{fig:schemes:proxy}.
For simplicity, we focus the initial discussion on the construction of the
$\matr{L}_i$ interpolation matrix, while to corresponding construction for
$\matr{R}_i$ is defined, in a sense, dual to it. We refer to the construction of
$\matr{L}_i$ and $\matr{R}_i$ as ``target skeletonization'' and ``source skeletonization'',
respectively. We consider a cluster of $n_i$ target points $X_i$, corresponding
to the row tuple $I_i$ (constructed as described in~\Cref{ssc:schemes:clusters}),
with a centroid $\ctgt$ and radius $\rtgt \coloneqq \max_{\vect{x} \in X_i}
\|\ctgt - \vect{x}\|_2$. The set of $q$ proxy points $\{\vect{p}_i \mid 0 \le k < q\}$
have no canonical construction. A common choice is a sphere around the cluster $X_i$, i.e.,
\begin{equation} \label{eq:schemes:proxy_surface}
\Sigma_{\text{pxy}, i} \coloneqq \{
\vect{p} \in \mathbb{R}^d \mid \|\vect{p} - \ctgt\|_2 = \rpxy\},
\end{equation}
where $\rpxy$ is a desired proxy radius. In practice, we choose $\rpxy \coloneqq
\alpha\, \rtgt$, for a constant $\alpha \ge 1$. This choice is motivated by the
fact that the Green functions used in the boundary integral equations are also
radially symmetric. However, for more general scenarios, an optimized proxy point
construction can be found in~\cite{Xing2020a,Ye2020}. In~\Cref{sc:errors}, we
provide an error analysis that aids in the choice of proxy points for the
spherical case.

The source points are split into a near-field and a far-field as
\begin{equation} \label{eq:schemes:proxy_near_far}
\begin{aligned}
Y^{(\text{near})}_i & \coloneqq
\{\vect{y} \in Y \setminus Y_i \mid \|\vect{y} - \ctgt\|_2 \le \rpxy\}, \\
Y^{(\text{far})}_i & \coloneqq Y \setminus (Y^{(\text{near})}_i \cup Y_i).
\end{aligned}
\end{equation}

\begin{remark}
As shown below, we are only interested in $Y^{(\text{near})}_i$ in the construction
of $\matr{L}_i$. This set can be determined efficiently with area queries based on
the existing quadtree or octree used in~\Cref{ssc:schemes:clusters}
(see \cite[Appendix A]{Wala2019}).
\end{remark}

For target skeletonization, the goal of this construction is to approximate the
interaction between the current point cluster $X_i$ and $Y^{(\textrm{far})}_i$.
Therefore, we seek a matrix $\matr{B}_i$ that captures the range of the matrix
block, i.e.,\ approximately,
\[
\operatorname{Range}(\matr{A}_{ij}) \approx \operatorname{Range}(\matr{B}_i),
\]
for all $i \ne j$. In general, it is not possible to ensure that the range of
$\matr{A}_{ij}$ will be a subset of the range of $\matr{B}_i$. In the following,
we construct the matrix $\matr{B}_i$ by considering the interactions between
the current cluster $X_i$ and the union of the proxy points $P_i$ and the
near-field $Y^{(\text{near})}_i$ (see~\Cref{fig:schemes:proxy}). An error
estimate for this approximation is provided in~\Cref{sc:errors}, by means of a
multipole expansion.

We introduce the notation $\matr{A}_{ij} \equiv
\matr{A}(X_i, Y_j)$, i.e.\ the component $(k, l)$ of the matrix block
$\vect{A}_{ij}$ is given by the interaction between the target point
$\vect{x}_k \in X_i$ and the source points $\vect{y}_l \in Y_j$. Using this
notation, we can write
\[
\matr{A}(X_i, Y_j) = \matr{K}(X_i, Y_j) \matr{W}(Y_j),
\]
where $\matr{K}(X_i, Y_j)$ is the (dense) matrix of kernel interactions (mediated
through the QBX local expansions) and $\matr{W}(Y_j)$ is a diagonal matrix comprised
of the quadrature weights and area elements on the source geometry at $Y_j$.
\reva{As an illustration, for the specific case of the single-layer kernel, a concrete
expression for $\matr{K}(X_i, Y_j)$ can be found in \eqref{eq:matrix_entries_slp}.}
We aim to construct proxy-based approximations that maintain a similar structure to
the operators.

\begin{method}[Target Skeletonization] \label{mm:schemes:target_skeletonization}
For target skeletonization, we construct the proxy matrix as follows
\begin{equation} \label{eq:schemes:proxy_target_matrix}
\matr{B}_i \coloneqq
\begin{bmatrix}
    \matr{G}(X_i, P_i) \matr{W}(P_i) &
    \matr{G}(X_i, Y^{(\text{near})}_i) \matr{W}(Y^{(\text{near})}_i)
\end{bmatrix}
\in \mathbb{R}^{n_i \times (p_i + n_i^{(\text{near})})}
\end{equation}
where $\matr{G}(X_i, P_i)$ are the target-proxy interactions with the underlying
Green's function for $\matr{K}$ and $\matr{W}(Y^{(\text{near})}_i)$ represent the
quadrature weights at the near-field points. The $\matr{W}(P_i)$ weight matrix
is chosen as a constant diagonal matrix
\[
\matr{W}(P_i) \coloneqq \|\matr{W}(Y^{(\text{near})}_i)\|_2 \matr{I}.
\]

The matrix $\matr{B}_i$ is then decomposed using~\Cref{thm:prelim:id} to obtain
\[
\matr{B}_i \approx \matr{L}_i
\begin{bmatrix}
  \matr{G}(X^{(\text{s})}_i, P_i) \matr{W}(P_i) &
  \matr{G}(X^{(\text{s})}_i, Y^{(\text{near})}_i) \matr{W}(Y^{(\text{near})}_i)
\end{bmatrix},
\]
where $X^{(\text{s})}_i \subset X_i$ are the remaining target skeleton points
(as chosen by the ID).
\end{method}

To improve the range approximation between the near-field and the far-field, we
have introduced the weight matrix $\matr{W}(P_i) \in \mathbb{R}^{n_i \times p_i}$
above. Its purpose is to ensure that the two blocks are weighted equally in a chosen
norm (here the spectral norm). This addition appears to not currently exist in the literature
and is not rigorously motivated, but it can lead to noticeable improvements in practice, as
shown in~\Cref{sssc:result:model:target_weight_matrix}.

\begin{method}[Source Skeletonization] \label{mm:schemes:source_skeletonization}
For source skeletonization, we construct the proxy matrix as follows
\begin{equation} \label{eq:schemes:proxy_source_matrix}
\matr{B}_j \coloneqq
\begin{bmatrix}
\matr{K}(P_j, Y_j) \matr{W}(Y_j) \\
\matr{K}(X^{(\text{near})}_j, Y_j) \matr{W}(Y_j)
\end{bmatrix}
\in \mathbb{R}^{(p_j + n_j^{(\text{near})}) \times n_j},
\end{equation}
where $\matr{K}(P_j, Y_j)$ represents the proxy-source interactions of the layer
potential kernel. The proxy points $P_j$ and the near-field target points
$X^{(\text{near})}_j$ are constructed analogously to~\reva{\eqref{eq:schemes:proxy_target_matrix}}.
The matrix $\matr{B}_j$ is then decomposed using~\Cref{thm:prelim:id} to obtain
\[
\matr{B}_j \approx
\begin{bmatrix}
\matr{K}(P_j, Y^{(\text{s})}_j) \matr{W}(Y^{(\text{s})}_j) \\
\matr{K}(X^{(\text{near})}_j, Y^{(\text{s})}_j) \matr{W}(Y^{(\text{s})}_j)
\end{bmatrix} \matr{R}_j,
\]
where $Y^{(\text{s})}_j \subset Y_j$ are the remaining source skeleton points
(as chosen by the ID).
\end{method}

The procedures above are repeated for each block. The choices made for the construction
of the proxy approximation matrix $\matr{B}$ (for both source and target
skeletonization) will be further discussed in the context of the error model
in~\Cref{sc:errors}.

\subsection{Recursive Skeletonization}
\label{ssc:schemes:skeletonization}

The decomposition from~\eqref{eq:schemes:skeletonization} can be performed
recursively. For this, we will introduce additional notation. First, we denote
the index tuples at each level of the recursion corresponding to the target
and source points by $\{I^{(\ell)}_i\}$ and $\{J^{(\ell)}_i\}$, where
$\ell \in \{0, \dots, N_{\text{levels}} - 1\}$ corresponds to the tree level,
where $0$ refers to the leaf level and $N_{\text{levels}} - 1$
refers to the root level. At the leaf level, $\{I^{(0)}_i\}$ and $\{J^{(0)}_i\}$
coincide with the index sets defined in~\Cref{ssc:prelim:ds} and constructed in
\Cref{ssc:schemes:clusters} for the single-level skeletonization. Given the
index sets at level $\ell$, we obtain the index sets at level $\ell + 1$ by
concatenating all the index tuples that have the same parent in the tree. This
ensures that the clusters at all levels maintain their geometric proximity
and that the clustered blocks in the matrix $\matr{A}$ remain adjacent.

Then, at each level $\ell$ we construct interpolation matrices $\matr{L}^{(\ell)}$
and $\matr{R}^{(\ell)}$ for the corresponding index tuples $\{I_i^{(\ell)}\}$ and
$\{J_i^{(\ell)}\}$, respectively, using the methods described
in~\Cref{ssc:schemes:proxy}. This gives rise to the telescoping factorization
\begin{equation} \label{eq:schemes:rec_skeletonization}
\matr{A} \approx \matr{D}^{(0)} + \matr{L}^{(0)} \Big(
  \matr{D}^{(1)} + \matr{L}^{(1)} (\cdots
  \matr{D}^{(\ell)} + \matr{L}^{(\ell)} (\cdots) \matr{R}^{(\ell)}
  \cdots) \matr{R}^{(1)}
\Big) \matr{R}^{(0)},
\end{equation}
where the root level just contains the coarsest $\matr{S}^{(N_{\text{levels}} - 1)}$
matrix. By definition, at the coarsest level we only have one index pair
$(I_0^{(N_{\text{levels}} - 1)}, J_0^{(N_{\text{levels} - 1})})$ that
describes the matrix $\matr{S}$ as a subset of $\matr{A}$ \reva{with a zero diagonal,
as in~\Cref{sc:schemes} (see also~\Cref{fig:schemes:recursion})}.

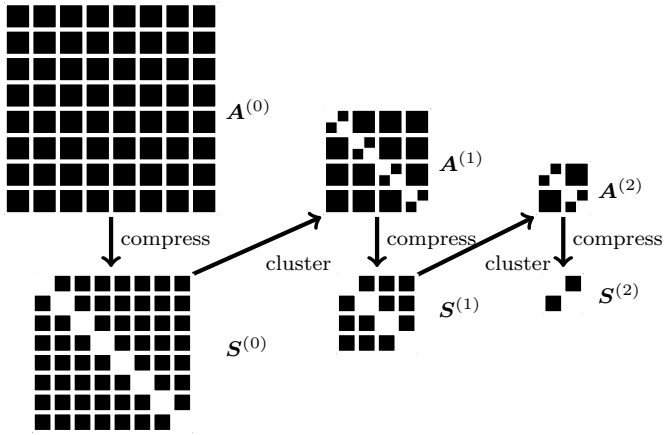
\begin{figure}[ht!]
\centering
\begin{tikzpicture}[scale=0.35]
\draw [fill=black] (0, 0) rectangle (8, 8);
\draw [ultra thick, white] (0, 0) grid (8, 8);
\draw [ultra thick, ->] (4, -0.1) -- node [right] {compress} (4, -2);
\draw [ultra thick, ->] (7, -2.3) -- node [below right] {cluster} (11.9, -0.1);
\node at (8, 4) [right] {$\matr{A}^{(0)}$};
\node at (8, -5) [right] {$\matr{S}^{(0)}$};

\draw [fill=black] (12, 0) rectangle (16, 4);
\draw [ultra thick, white] (12, 0) grid (16, 4);
\draw [ultra thick, ->] (14, -0.1) -- node [right] {compress} (14, -2);
\draw [ultra thick, ->] (15.5, -2.3) -- node [below right] {cluster} (19.9, -0.1);
\foreach \x in {12,12.5,...,15.5}{
    \draw [fill=white, white] (\x, 16 - \x) rectangle (\x + 0.5, 15.5 - \x);
}
\node at (16, 2) [right] {$\matr{A}^{(1)}$};
\node at (16, -3.5) [right] {$\matr{S}^{(1)}$};

\draw [fill=black] (20, 0) rectangle (22, 2);
\draw [ultra thick, white] (20, 0) grid (22, 2);
\draw [ultra thick, ->] (21, -0.1) -- node [right] {compress} (21, -2);
\foreach \x in {20,20.5,...,21.5}{
    \draw [fill=white, white] (\x, 22 - \x) rectangle (\x + 0.5, 21.5 - \x);
}
\node at (22, 1) [right] {$\matr{A}^{(2)}$};
\node at (22, -3) [right] {$\matr{S}^{(2)}$};

\begin{scope}[scale=0.75,xshift=-17pt]
\draw [fill=black] (2, -11) rectangle (10, -3);
\draw [ultra thick, white] (2, -11) grid (10, -3);
\foreach \x in {2,...,9}{
    \draw [fill=white, white] (\x, -2 - \x) rectangle (\x + 1, -1 - \x);
}
\end{scope}

\begin{scope}[scale=0.75,xshift=-10pt]
\draw [fill=black] (17, -7) rectangle (21, -3);
\draw [ultra thick, white] (17, -7) grid (21, -3);
\foreach \x in {17,...,20}{
    \draw [fill=white, white] (\x, 14 - \x) rectangle (\x + 1, 13 - \x);
}
\end{scope}

\begin{scope}[scale=0.75, xshift=0pt]
\draw [fill=black] (27, -5) rectangle (29, -3);
\draw [ultra thick, white] (27, -5) grid (29, -3);
\foreach \x in {27, ..., 28}{
    \draw [fill=white, white] (\x, 24 - \x) rectangle (\x + 1, 23 - \x);
}
\end{scope}
\end{tikzpicture}
\caption{Multilevel compression and clustering. Black blocks are unchanged and
  low-rank, while white blocks are full rank and correspond to the diagonals
  $\matr{D}^{(\ell)}$ in~\eqref{eq:schemes:rec_skeletonization} (reproduced
  from Figure~2 in \cite{Martinsson2005}).}
\label{fig:schemes:recursion}
\end{figure}

A fully recursive approximate direct solver can be constructed using the steps
provided in \Cref{alg:schemes:single_level} and the hierarchy provided by the
quadtree (or octree) described in~\Cref{ssc:schemes:clusters}. Abstractly, it
consists of an offline step that constructs the $(\matr{D}^{(\ell)},
\matr{L}^{(\ell)}, \matr{R}^{(\ell)})$ matrices at each level and an online
step that solves the approximate system for a given right-hand side $\vect{b}$.
As shown in \Cref{fig:schemes:recursion}, we also define the series of matrices
$\matr{A}^{(\ell)}$ and $\matr{S}^{(\ell)}$ representing the input matrix and
the skeleton matrix at each level. By construction, each \reva{off-diagonal}
entry of $\matr{A}^{(\ell)}$ is given by a re-indexing or the original matrix
$\matr{A}_{I_i^{(\ell)}, J_j^{(\ell)}}$, \reva{while the diagonal entries are
clustered from the previous level and have a zero sub-diagonal}. This offline
construction step is detailed in~\Cref{alg:schemes:skeletonization}. \reva{We
note that the matrices $\matr{S}^{(\ell)}$ and $\matr{A}^{(\ell)}$ are never
constructed explicitly, and only the entries required for
\Cref{mm:schemes:target_skeletonization},
\Cref{mm:schemes:source_skeletonization}, and computing $\matr{D}^{(\ell)}$ are
evaluated.}

\begin{center}
\vspace{-0.7cm}
\begin{minipage}{0.75\linewidth}
\begin{algorithm}[H]
\caption{\sc Proxy-Based Recursive Compression}
\label{alg:schemes:skeletonization}
\textbf{Inputs}: Index tuples $\{I^{(0)}_i\}$ and $\{J^{(0)}_i\}$ \\
\textbf{Outputs}: Matrices $\matr{L}^{(\ell)}, \matr{D}^{(\ell)}, \matr{R}^{(\ell)},
  \matr{S}^{(N_{\text{levels} - 1})}$.
\begin{algorithmic}[1]
  \For{$\ell = 0, \dots, N_{\text{levels}} - 1$}
    \State $\matr{D}^{(\ell)} \gets \operatorname{block-diag} (\matr{A}^{(\ell)})$\;
    \State $(\hat{I}^{(\ell + 1)}, \matr{L}^{(\ell)})$ are computed using
      \Cref{mm:schemes:target_skeletonization} for $I^{(\ell)}$ and $\matr{A}^{(\ell)}$\;
    \State $(\hat{J}^{(\ell + 1)}, \matr{R}^{(\ell)})$ are computed using
      \Cref{mm:schemes:source_skeletonization} for $J^{(\ell)}$ and $\matr{A}^{(\ell)}$\;
    \State Compute $(I^{(\ell + 1)}, J^{(\ell + 1)}, \matr{A}^{(\ell + 1)})$
      by clustering $(\hat{I}^{\ell + 1}, \hat{J}^{(\ell + 1)}, \matr{S}^{(\ell)})$
      from child to parent based on the quadtree (octree) hierarchy\;
  \EndFor
\end{algorithmic}
\end{algorithm}
\end{minipage}
\end{center}

The offline step should only be performed once and can be reused to solve for
multiple right-hand sides $\vect{b}$. Following the offline step, we also define
the ``compressed'' right-hand sides $\vect{b}^{(\ell)}$ and the solution vectors
$\vect{x}^{(\ell)}$ at each level based on re-indexing the leaf level
$\vect{b}^{(0)} \equiv \vect{b}$ and $\vect{x}^{(0)} \equiv \vect{x}$ with the
corresponding index tuples. Then, the online step from~\Cref{alg:schemes:multi_level}
can take the following the steps
\begin{enumerate}
  \item ``Compress'' the right-hand side vector $\vect{b}$ to the root level,
  using the $\vect{R}_i^{(\ell)}$ matrices to obtain $\vect{b}^{(\ell + 1)}$
  from $\vect{b}^{\ell}$.
  \item Solve a small dense linear system with the root matrix
  $\matr{S}^{(N_{\text{levels} - 1})}$ and $\vect{b}^{(N_{\text{levels}} - 1)}$.
  \item ``Uncompress'' the solution vector $\vect{x}$ to the leaf level, using
  the $\matr{L}_i^{(\ell)}$ matrices to obtain $\vect{x}^{(\ell - 1)}$
  from $\vect{x}^{(\ell)}$.
\end{enumerate}

\begin{center}
\vspace{-0.7cm}
\begin{minipage}{0.7\linewidth}
\begin{algorithm}[H]
\caption{\sc Multi-Level Solve}
\label{alg:schemes:multi_level}
\textbf{Inputs}: Matrices $\matr{L}^{(\ell)}, \matr{D}^{(\ell)}, \matr{R}^{(\ell)},
  \matr{S}^{(N_{\text{levels} - 1})}$ and $\vect{b}$ \\
\textbf{Inputs}: Index tuples $\{I^{(\ell)}_i\}$ and $\{J^{(\ell)}_i\}$ \\
\textbf{Outputs}: Approximate solution $\vect{x}_\epsilon$
\begin{algorithmic}[1]
  \State $\vect{b}^{(0)} \gets \vect{b}$\;
  \For{$\ell = 0, \dots, N_{\text{levels}} - 2$}
    \For{$i \in 0, \dots, N^{(\ell)}$}
      \State $\hat{\vect{b}}_i^{(\ell)} \gets
        \hat{\matr{D}}_{ii}^{(\ell)} \matr{R}_{ii}^{(\ell)}
        \left(\matr{D}_{ii}^{(\ell)}\right)^{-1} \vect{b}_i^{(\ell)}$\;
    \EndFor
      \State $\vect{b}^{(\ell + 1)} \gets \hat{\vect{b}}^{(\ell)}$\;
  \EndFor

  \State $\vect{x}^{(N_{\text{levels} - 1})} \gets
    (\matr{S}^{(N_{\text{levels}} - 1)})^{-1} \vect{b}^{(N_{\text{levels} - 1})}$\;

  \For{$\ell = N_{\text{levels}} - 2, \dots, 0$}
    \State $\hat{\vect{x}}^{(\ell)} \gets \vect{x}^{(\ell + 1)}$\;
    \For{$i = 0, \dots, N^{(\ell)}$}
      \State $\vect{x}^{(\ell)}_i \gets
        \left(\matr{D}_{ii}^{(\ell)}\right)^{-1} \left(
        \vect{b}_i^{(\ell)} - \matr{L}_{ii}^{(\ell)} \hat{\vect{b}}_i^{(\ell)}
        + \matr{L}_{ii}^{(\ell)} \hat{\matr{D}}_{ii}^{(\ell)} \hat{\vect{x}}_i^{(\ell)}
        \right)$\;
    \EndFor
  \EndFor
\end{algorithmic}
\end{algorithm}
\end{minipage}
\end{center}

\revb{For the steps described in~\Cref{alg:schemes:skeletonization} and
\Cref{alg:schemes:multi_level}, \reva{the} scheme has an asymptotic cost
of~\cite{Gillman2012,Ho2012} (extended from~\eqref{eq:schemes:single_level_complexity})
\begin{equation} \label{eq:schemes:scaling}
\begin{aligned}
T^{(\text{2D})} & =
  c_{\text{offline}} + c_{\text{solve}} =
  O(n) + O(n), \\
T^{(\text{3D})} & =
  c_{\text{offline}} + c_{\text{solve}} =
  O(n^{\frac{3}{2}}) + O(n \log n),
\end{aligned}
\end{equation}
}as $n \to \infty$, under appropriate rank assumptions in~\Cref{thm:prelim:id}.
The three-dimensional solver can also be made into a solver with linear time complexity
by recursively compressing the interpolation matrices, as discussed in~\cite{Gillman2012}.
This is mainly an algorithmic improvement, which is not incorporated in the current
study.

\subsection{QBX Direct Solver}
\label{ssc:schemes:qbx}

Coupling the recursive direct solver construction from~\Cref{ssc:schemes:skeletonization}
with the adaptive QBX method described in~\Cref{ssc:prelim:qbx} requires special
care. We address the choices made for this work in the following.

\emph{Choice of discretization for constructing the matrix $\matr{A}$}. In the
context of the direct solver, the choice of density (source) and result (target)
discretizations is subject to additional considerations. First, to avoid a
potentially expensive refactorization of the HSS matrix, we choose to avoid
the inclusion of interpolation steps. In addition, from a linear-algebraic
point of view, it is convenient for the system matrix to be square (see~\cite{Ho2014}).
This leads to the choice of a single discretization that serves as both the target and
the source discretization. In view of \Cref{ssc:prelim:qbx}, we use a version of
the \stagetwo{} (i.e.,\ interpolatory) discretization that provides accurate
resolution to meet the accuracy goals for integration of the kernel given
in~\cite{Kloeckner2013}.

\emph{Impact of the QBX method on proxy ball selection}. The proxy balls
$\Sigma_{\text{pxy}, i}$ must be selected in such a way that the accuracy of
the local expansions used in the QBX method is not affected. To address this,
we ensure that the proxy radius $\rpxy$ is always larger than the cluster radius
containing the QBX expansion balls. This is achieved by adding the maximum local
QBX expansion radius to the cluster radius $\rtgt$. Taking a larger $\rpxy$ also
presents a trade-off in efficiency, as it increases the size of the
$Y^{(\text{near})}_i$ set used in the construction of the proxy matrix
$\matr{B}_i$ from~\Cref{ssc:schemes:proxy}. However, it also results in better
accuracy, as discussed in~\Cref{sc:errors}.

\emph{Impact of the QBX method in proxy matrix construction}.
\reva{Due to the use of the QBX method, the construction of the proxy matrices
is asymmetric between the target and source proxy skeletonization, as shown
in \eqref{eq:schemes:proxy_target_matrix} and \eqref{eq:schemes:proxy_source_matrix}.
Specifically, the source skeletonization makes use of the $\matr{K}(P_j, Y_j)$
entries, i.e. the QBX regularized kernel, when considering the neighbor and proxy
interactions, while the target skeletonization makes use of the unregularized
kernel $\matr{G}(X_i, P_i)$ entries. We have found that, empirically, this
modification consistently decreases the error.}

\section{Estimates of Skeletonization Error}
\label{sc:errors}

We are now in a position to discuss error bounds for a direct solver
for boundary integral equations discretized using the QBX method. The solver
is constructed using the method described in~\Cref{sc:schemes}, where the
interpolation matrices $\matr{L}$ and $\matr{R}$ are obtained by applying the
proxy skeletonization method from \Cref{ssc:schemes:proxy} on the \stagetwo{}
discretization of the QBX method, as described in \Cref{ssc:schemes:qbx}. For
simplicity, we analyze the single-level direct solver~\eqref{eq:schemes:skeletonization}
using the same number of proxy points $q$ and the proxy radius factor
$\alpha$ in each cluster. Furthermore, we only consider the three-dimensional
single-layer operator~\eqref{eq:prelim:lp}, while the two-dimensional case can be
handled analogously. The results can also be extended to the case where the
proxy points $q_i$ and the proxy radius factor $\alpha_i$ are defined per cluster.

In recent years, there have been several renewed efforts at providing more rigorous
error estimate for proxy-based skeletonization direct solvers
\cite{Ye2020,Xing2020a}. In this work, we present an error estimate for
complete implementations, such as~\cite{Gillman2012,Ho2012}, that extends to
the case where the kernel $K$ is regularized by the QBX method. The main ingredient
used in determining the proxy points $\{\vect{p}_k\}$ is a
quadrature rule on the sphere that serves as a
source of proxy points that are able to integrate spherical harmonics up to a
given order $2 p$ (see \Cref{lemma:errors:multipole}). For a given quadrature
rule we define $q = Q(p)$ as the number of points corresponding to the order $p$.
There exist in the literature high-order accurate quadrature rules on the
sphere having equal weights, termed \emph{spherical designs}
\cite[Section 1.2]{Womersley2018}. According to numerical and theoretical results there,
one may expect that $q = Q(2 p) = 2 p (p + 1) + O(1)$, as $p \to \infty$. On account of
the equality of weights, we chose these rules as a source of proxy points.

The significant parameters in this analysis include the number of proxy points
$q$ and the ID tolerance $\epsilon_{\text{id}}$. We start by stating an error
bound treating both as independent parameters in~\Cref{thm:errors:ids}
and~\Cref{cor:errors:global}. Then, \Cref{cor:errors:proxy_count} provides
guidance in choosing $q$ so that the error is minimized (by balancing the terms
in~\eqref{eq:errors:global}).

\begin{theorem} \label{thm:errors:ids}
Consider a source point cluster $Y_j$ and target points in $X^{(\text{d})}_j \coloneqq
\cup_{i \ne j} X_i$ stemming from a discretized three-dimensional geometry. Let the
cluster $Y_j$ have a centroid $\vect{c}_{\text{src}, j}$ and a radius $r_{\text{src},
j} \coloneqq \max_{\vect{y} \in Y_j} \|\vect{y} - \vect{c}_{\text{src}, j}\|_2$. Let
$\Sigma_{\text{pxy}, j}$ be the proxy sphere, centered at $\vect{c}_{\text{src},
j}$ with a radius $r_{\text{pxy}, j} \coloneqq \alpha r_{\text{src}, j}$, for
$\alpha > 1$, which is sampled at a set of $q$ proxy points $\{\vect{p}_k\}_{k = 0}^{q
- 1}$. Then, the source skeletonization error from the cluster $Y_j$ to points in
$X^{(\text{d})}_j$ is given by
\begin{equation} \label{eq:errors:ids}
\begin{aligned}
& \|\vect{A}_{:, j} - (\matr{D}_{:, j} + \matr{S}_{:, j} \matr{R}_j)\|_2 \\
=\,\, &
\|\matr{K}(X^{(\text{d})}_j, Y_j) \matr{W}(Y_j)
  - \matr{K}(X^{(\text{d})}_j, Y_j^{(\text{s})}) \matr{W}(Y_j^{(\text{s})}) \matr{R}_j\|_2 \\
\le\,\, &
\left(1 + c_0 \frac{4 \pi r^2_{\text{pxy}, j}}{q}\right) \epsilon_{\text{id}}
+ c_1 \frac{1}{4 \pi r_{\text{pxy}, j}} \frac{1}{\alpha - 1}
  \alpha^{\left\lceil \frac{1}{2} Q^{-1}(q) \right\rceil},
\end{aligned}
\end{equation}
where the constants $c_0$ (see~\eqref{eq:errors:constants} and \Cref{prop:errors:t_estimate})
and $c_1$ (see \eqref{eq:errors:constants} and \Cref{lemma:errors:multipole})
depend on the geometry and size of the point sets $Y_j$ and $X_j$, but not on proxy
surface parameters $(q, r_{\text{pxy}, j})$ and tolerance $\epsilon_{\text{id}}$.
\end{theorem}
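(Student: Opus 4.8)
The plan is to decompose the skeletonization error into two pieces: an ID-induced piece coming from the fact that $\matr{B}_j$ is approximated by a rank-$k$ factorization, and a proxy-induced piece coming from the fact that $\matr{B}_j$ is only a proxy for the true far-field block $\matr{K}(X^{(\mathrm{d})}_j, Y_j)\matr{W}(Y_j)$. Write $\matr{E}_j \coloneqq \matr{B}_j - \hat{\matr{B}}_j \matr{R}_j$ for the ID residual from \Cref{thm:prelim:id} applied to the source-skeletonization matrix~\eqref{eq:schemes:proxy_source_matrix}, with $\|\matr{E}_j\|_2 \le \epsilon_{\mathrm{id}}$ by construction. The first step is to insert and subtract the proxy-mediated reconstruction: bound the target-side quantity $\|\matr{K}(X^{(\mathrm{d})}_j, Y_j)\matr{W}(Y_j) - \matr{K}(X^{(\mathrm{d})}_j, Y_j^{(\mathrm{s})})\matr{W}(Y_j^{(\mathrm{s})})\matr{R}_j\|_2$ by the triangle inequality in terms of (i) the error of representing $\matr{K}(X^{(\mathrm{d})}_j, \cdot)$ on $Y_j$ versus on the proxy points, i.e. the multipole/Poisson-expansion error, applied both with and without $\matr{R}_j$, and (ii) the ID residual $\matr{E}_j$ mapped forward through the appropriate evaluation operator. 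The key observation making this work is that, because the far targets $X^{(\mathrm{d})}_j$ all lie outside the proxy sphere $\Sigma_{\mathrm{pxy},j}$, the action of $\matr{K}(X^{(\mathrm{d})}_j, \cdot)$ on any density supported inside the proxy ball is faithfully captured (up to the tail of the expansion) by its values on the proxy sphere, via the Poisson integral identity~\eqref{eq:prelim:poisson_integral} discretized by the spherical design quadrature.

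The second step is to make this precise using \Cref{lemma:errors:multipole} (the multipole/Poisson estimate with a spherical-design quadrature exact to order $2p$) together with the spherical harmonic bound~\eqref{eq:prelim:sph_bound} and the series~\eqref{eq:prelim:laplace_expansion}, \eqref{eq:prelim:poisson_expansion}. For a source point $\vect{y} \in Y_j$ at distance $r_y < r_{\mathrm{pxy},j}$ from the centroid and a target $\vect{x} \in X^{(\mathrm{d})}_j$ at distance $r_x > r_{\mathrm{pxy},j}$, the difference between $G(\vect{x},\vect{y})$ and its reconstruction through the proxy sphere is controlled by the truncated tail $\sum_{\ell > Q^{-1}(q)/2}$, whose ratio geometric in $r_y/r_{\mathrm{pxy},j} \le \alpha^{-1}$ is summed by the $\frac{1}{\alpha-1}$ factor, and whose prefactor $\frac{1}{4\pi r_{\mathrm{pxy},j}}$ comes from the Green's function scaling on the proxy sphere — this produces exactly the $c_1$ term. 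For the $c_0 \epsilon_{\mathrm{id}}$ term: the ID residual $\matr{E}_j$ is multiplied by the discrete proxy-to-far-field evaluation map, whose norm is bounded using the quadrature weight normalization $\matr{W}(P_i) = \|\matr{W}(Y^{(\mathrm{near})}_i)\|_2 \matr{I}$ from \Cref{mm:schemes:target_skeletonization}/\Cref{mm:schemes:source_skeletonization} and the area element $4\pi r_{\mathrm{pxy},j}^2 / q$ of the spherical design; combined with the $1$ coming from the trivial pass-through of the ID error on the exact skeleton columns, this gives the $\bigl(1 + c_0 \frac{4\pi r_{\mathrm{pxy},j}^2}{q}\bigr)\epsilon_{\mathrm{id}}$ factor. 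The constants $c_0, c_1$ then collect the $r_x$-dependent and geometry-dependent factors over $X^{(\mathrm{d})}_j$, which one bounds uniformly by taking worst cases over the (finite) far-target set; crucially these are independent of $q$, $r_{\mathrm{pxy},j}$, $\epsilon_{\mathrm{id}}$, as claimed, and I would defer their explicit form to \eqref{eq:errors:constants}, \Cref{prop:errors:t_estimate}, \Cref{lemma:errors:multipole}.

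The third step is bookkeeping: convert the per-entry/per-column kernel bounds into a spectral-norm bound on the block $\matr{A}_{:,j} - (\matr{D}_{:,j} + \matr{S}_{:,j}\matr{R}_j)$, using that $\matr{D}_{:,j}$ contributes only the diagonal block $\matr{A}_{jj}$ (which is exactly reproduced and hence drops out), so the residual is supported on the off-diagonal rows $X^{(\mathrm{d})}_j$ and equals precisely $\matr{K}(X^{(\mathrm{d})}_j, Y_j)\matr{W}(Y_j) - \matr{K}(X^{(\mathrm{d})}_j, Y_j^{(\mathrm{s})})\matr{W}(Y_j^{(\mathrm{s})})\matr{R}_j$ — this is the asserted identity in~\eqref{eq:errors:ids}. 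One then passes from the entrywise expansion-tail estimates to an operator-norm estimate by, e.g., controlling a Frobenius-type sum and using $\|\cdot\|_2 \le \|\cdot\|_F$, or more sharply by recognizing the reconstruction error as a quadrature error of a harmonic function and invoking \Cref{lemma:errors:multipole} in operator form directly; the $|R_{ij}| \le 1$ bound from \Cref{thm:prelim:id} keeps the $\matr{R}_j$-weighted copies of these errors under control.

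**Main obstacle.** I expect the hard part to be the passage from the clean continuous Poisson-integral picture to the discrete proxy-sphere quadrature while simultaneously carrying the ID: one must argue that replacing the true far-field block by the proxy block $\matr{B}_j$ commutes (up to the stated errors) with the ID-skeleton selection, i.e. that the skeleton $Y_j^{(\mathrm{s})}$ chosen to compress $\matr{B}_j$ also compresses $\matr{K}(X^{(\mathrm{d})}_j, Y_j)\matr{W}(Y_j)$ to within the combined tolerance. This requires showing $\operatorname{Range}\bigl(\matr{K}(X^{(\mathrm{d})}_j, Y_j)\bigr)$ is contained, up to the multipole tail, in $\operatorname{Range}(\matr{B}_j)$ — which is the content of the remark after \Cref{mm:schemes:source_skeletonization} that this containment cannot be guaranteed exactly — and then tracking how the $\sqrt{1 + k(n-k)}$-type amplification in \Cref{thm:prelim:id} interacts with the proxy tail; the $\frac{4\pi r_{\mathrm{pxy},j}^2}{q}$ prefactor on $\epsilon_{\mathrm{id}}$ is precisely the residue of this amplification after using the equal-weight spherical design. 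The weight-matrix normalization $\matr{W}(P_i)$ is what makes the two blocks of $\matr{B}_j$ comparable in the spectral norm and thus keeps this amplification from blowing up; getting that constant right is the delicate point.
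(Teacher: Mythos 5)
Your proposal follows essentially the same route as the paper: split the column error into a near-field part (bounded directly by $\epsilon_{\mathrm{id}}$ since those rows appear in $\matr{B}_j$) and a far-field part, mediate the far field through a discretized Poisson-integral operator $\matr{T}$ whose norm carries the equal-weight factor $4\pi r_{\mathrm{pxy},j}^2/q$ (the paper's \Cref{prop:errors:t_estimate}), and bound the residual by the multipole truncation tail $\alpha^{-p}/(\alpha-1)$ obtained from the spherical-design quadrature's exactness on harmonics of order $2p$ (the paper's \Cref{lemma:errors:multipole}). The only minor discrepancy is your attribution of the leading ``$1$'' in $(1 + c_0 \cdot)\epsilon_{\mathrm{id}}$ to a pass-through on the skeleton columns, whereas in the paper it arises from the near-field target rows of $\matr{B}_j$; this does not affect the validity of the argument.
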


The proof of this result is given in~\Cref{ssc:errors:global} and
\Cref{ssc:errors:multipole}. An analogous result can be obtained for the target
skeletonization error $\|\matr{A}_{i, :} - (\matr{D}_{i, :}) + \matr{L}_i
\matr{S}_{i, :})\|_2$, where only the constants differ, as stated
in~\eqref{eq:errors:constants}. The two can be combined to obtain an estimate of
the global error for the skeletonization of a three-dimensional single-layer
potential as follows.

\begin{corollary} \label{cor:errors:global}
Under the assumptions of \Cref{thm:errors:ids}, we have that
\begin{equation} \label{eq:errors:global}
\|\matr{A} - (\matr{D} - \matr{L} \matr{S} \matr{R})\|_2 \le
c \left\{
\left(1 + c_0 \frac{4 \pi R^2_{\text{pxy}}}{q}\right) \epsilon_{\text{id}}
+ c_1 \frac{1}{4 \pi R_{\text{pxy}}} \frac{1}{\alpha - 1} \frac{1}{\alpha^p}
\right\},
\end{equation}
where $R_{\text{pxy}} \coloneqq \max r_{\text{pxy}, i}$ and $c_0, c_1 \in
\mathbb{R}$ are constants that do not depend on $(q, R_{\text{pxy}})$ and
tolerance $\epsilon_{\text{id}}$.
\end{corollary}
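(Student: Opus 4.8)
The plan is to derive \Cref{cor:errors:global} from \Cref{thm:errors:ids} by assembling the per-cluster skeletonization errors into a global bound on $\|\matr{A} - (\matr{D} + \matr{L}\matr{S}\matr{R})\|_2$. First I would decompose the global error matrix $\matr{E} \coloneqq \matr{A} - (\matr{D} + \matr{L}\matr{S}\matr{R})$ into contributions that can each be controlled by a single-cluster estimate. The key observation is that, because $\matr{D}$, $\matr{L}$, and $\matr{R}$ are all block diagonal, each off-diagonal block $\matr{E}_{ij}$ with $i \neq j$ equals $\matr{A}_{ij} - \matr{L}_i \matr{S}_{ij} \matr{R}_j$, while the diagonal blocks of $\matr{E}$ vanish. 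One natural route is to write $\matr{E}$ as a sum of block rows (or block columns) and use the triangle inequality: $\|\matr{E}\|_2 \le \sum_{j} \|\matr{E}_{:,j}\|_2$, where $\matr{E}_{:,j}$ is the matrix that agrees with $\matr{E}$ on block column $j$ and is zero elsewhere. However, since we want a bound of the form ``$c$ times the single-cluster bound'' rather than ``$N$ times'', I would instead combine the source-skeletonization estimate (which bounds $\|\matr{E}^{(\mathrm{src})}_{:,j}\|_2$, the error from replacing $\matr{R}_j$) and the target-skeletonization estimate (which bounds $\|\matr{E}^{(\mathrm{tgt})}_{i,:}\|_2$) via a standard telescoping split $\matr{A}_{ij} - \matr{L}_i \matr{S}_{ij}\matr{R}_j = (\matr{A}_{ij} - \matr{L}_i \matr{A}_{X_i^{(\mathrm{s})},Y_j}) + \matr{L}_i(\matr{A}_{X_i^{(\mathrm{s})},Y_j} - \matr{S}_{ij}\matr{R}_j)$, so that the global error is controlled by a combination of the two one-sided errors, each of which \Cref{thm:errors:ids} (and its stated target analogue) bounds cluster by cluster.

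Next I would bound the resulting sum of block-row / block-column norms by a norm over the full matrix. The mechanism here is the elementary inequality relating a block-partitioned matrix's spectral norm to the spectral norms of its block rows or columns: if $\matr{M}$ has block rows $\matr{M}_{i,:}$ then $\|\matr{M}\|_2 \le \big(\sum_i \|\matr{M}_{i,:}\|_2^2\big)^{1/2} \le \sqrt{N+1}\,\max_i \|\matr{M}_{i,:}\|_2$, and similarly for block columns. Applying this to $\matr{E}$ (split into the two one-sided pieces) produces a prefactor that is absorbed into the constant $c$, together with the bounds $\|\matr{L}_i\|_2 \le 1$ from \Cref{thm:prelim:id} (the interpolation matrices have entries bounded by $1$, hence bounded spectral norm depending only on cluster sizes) used to pass the target-side factor through. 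At this stage each surviving term has exactly the shape of the right-hand side of \eqref{eq:errors:ids} with cluster-specific parameters $r_{\mathrm{pxy},j}$ and $\alpha$.

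Finally I would pass from the per-cluster parameters to the uniform ones. Since the analysis assumes a common proxy count $q$ and common radius factor $\alpha > 1$ across clusters, the only cluster dependence left is through $r_{\mathrm{pxy},j}$. I would use $r_{\mathrm{pxy},j} \le R_{\mathrm{pxy}} \coloneqq \max_i r_{\mathrm{pxy},i}$ to dominate the first ($\epsilon_{\mathrm{id}}$-proportional) term — here the factor $4\pi r_{\mathrm{pxy},j}^2/q$ is increasing in $r_{\mathrm{pxy},j}$, so replacing by $R_{\mathrm{pxy}}$ is an upper bound — and to dominate the second (multipole) term one notes it is a product of $1/(4\pi r_{\mathrm{pxy},j})$, which is \emph{decreasing} in $r_{\mathrm{pxy},j}$, and $\alpha^{\lceil Q^{-1}(q)/2\rceil}$; with the spherical-design relation $q = Q(2p)$ giving $\lceil Q^{-1}(q)/2\rceil = p$, the exponent term is already uniform, so the small-$r$ worst case would go the other way — but since both $c_0, c_1$ are permitted to absorb geometry-dependent constants (including a uniform lower bound on $r_{\mathrm{pxy},i}$), one may simply replace the cluster radii by $R_{\mathrm{pxy}}$ throughout at the cost of redefining $c_0, c_1, c$, which is exactly the form stated in \eqref{eq:errors:global}. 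The main obstacle I anticipate is the bookkeeping in the telescoping split that keeps the global bound proportional to a \emph{single} cluster's bound (folding the $\sqrt{N+1}$-type factors and the $\|\matr{L}_i\|_2, \|\matr{R}_j\|_2$ factors cleanly into $c$) rather than accumulating an explicit sum over the $N+1$ clusters; making precise exactly which norm inequality is invoked, and checking that the direction of monotonicity in $r_{\mathrm{pxy},j}$ is handled consistently for both error terms, is the delicate part.
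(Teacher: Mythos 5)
Your proposal follows essentially the same route as the paper: a telescoping split of $\matr{A}_{ij}-\matr{L}_i\matr{S}_{ij}\matr{R}_j$ into a target-skeletonization and a source-skeletonization error, a triangle-inequality reduction to block rows and block columns, application of the per-cluster bound of \Cref{thm:errors:ids}, and absorption of the cluster count and the $\|\matr{L}\|_2,\|\matr{R}\|_2$ factors into $c$ before maximizing over clusters. If anything you are slightly more careful than the paper's terse ``take a maximum over all the clusters,'' since you correctly flag that the $1/(4\pi r_{\mathrm{pxy},j})$ factor is decreasing in the radius and must be handled by absorbing a geometry-dependent ratio into $c_1$.
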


The result from~\Cref{thm:errors:ids} can be used to provide an estimate for
the proxy radius $r_{\text{pxy}, j}$ and the number of proxy points $q$ required
to achieve a given tolerance. A good choice of the proxy point count $q$, for a
specified tolerance $\epsilon_{\text{id}}$, can be obtained by ensuring the
two terms in the estimate from~\Cref{cor:errors:global} are of equal magnitude.

\begin{corollary} \label{cor:errors:proxy_count}
Under the assumptions of~\Cref{thm:errors:ids}, the global error estimate
from~\eqref{eq:errors:global} is minimized by balancing the two terms. This is
achieved by choosing
\begin{equation} \label{eq:errors:proxy_count}
p \sim -\frac{1}{\log \alpha} \log \left[
  (\alpha - 1) \frac{4 \pi R_{\text{pxy}} (1 + 4 \pi c_0 R^2_{\text{pxy}})}{c_1}
  \epsilon_{\text{id}}
\right],
\end{equation}
for constants $c_0, c_1 \in \mathbb{R}_+$. Then, we take the proxy count $q$ to be
$q \sim 2 p (p + 1)$.
\end{corollary}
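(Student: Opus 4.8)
The plan is to treat the right-hand side of \eqref{eq:errors:global} as a function of the single free parameter $p$ (equivalently the proxy count $q$), optimize it to leading order, and then translate the result back into a proxy count using the spherical-design relation. First I would substitute $q = Q(2p)$ and invoke the estimate $q = 2p(p+1) + O(1)$ quoted above for equal-weight spherical designs, so that the ID contribution to \eqref{eq:errors:global} reads $\bigl(1 + c_0\, 4\pi R_{\mathrm{pxy}}^2 / (2p(p+1) + O(1))\bigr)\epsilon_{\mathrm{id}}$. The key structural observation is that, for every admissible $p \ge 1$ (so $q \ge 1$), this term is sandwiched between $\epsilon_{\mathrm{id}}$ and $(1 + 4\pi c_0 R_{\mathrm{pxy}}^2)\epsilon_{\mathrm{id}}$: up to a bounded factor it is \emph{independent} of $p$, an $\epsilon_{\mathrm{id}}$-floor. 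By contrast the multipole term $c_1\,(4\pi R_{\mathrm{pxy}})^{-1}(\alpha-1)^{-1}\alpha^{-p}$ decays geometrically in $p$. Hence the total bound is, to leading order, a strictly decreasing geometric term added to a floor that no choice of $p$ can beat, and the smallest $p$ bringing the bound within a constant factor of that floor is exactly the crossover where the two terms have equal magnitude.

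Next I would solve that crossover equation. Equating the multipole term to the constant-in-$p$ upper bound $(1 + 4\pi c_0 R_{\mathrm{pxy}}^2)\epsilon_{\mathrm{id}}$ for the ID term gives $\alpha^{-p} = 4\pi R_{\mathrm{pxy}}(\alpha-1)(1 + 4\pi c_0 R_{\mathrm{pxy}}^2)\epsilon_{\mathrm{id}}/c_1$; taking logarithms and dividing by $-\log\alpha$ yields precisely \eqref{eq:errors:proxy_count}. I would then record that this expression is positive exactly when the bracketed argument is below $1$, i.e. when $\epsilon_{\mathrm{id}}$ is small enough relative to the fixed, geometry-dependent constants $c_0, c_1$ and the proxy radius $R_{\mathrm{pxy}}$; this is always attainable by tightening the tolerance and is the regime of interest. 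Since $p$ and the associated quadrature order are integers, one takes $\lceil p\rceil$ — consistent with the ceiling already appearing in \eqref{eq:errors:ids} — and reads off $q \sim 2p(p+1)$ from the design count, rounding up to the nearest available design size, with the $O(1)$ discrepancy absorbed into the $\sim$.

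I expect the main obstacle to be making the phrase ``the estimate is minimized by balancing the two terms'' rigorous, since the right-hand side of \eqref{eq:errors:global} is not a smooth function with an interior critical point but a sum of a floor term and a geometrically decaying term over a discrete parameter, so strict minimization in the usual sense does not occur. The honest content of the corollary is asymptotic: the choice \eqref{eq:errors:proxy_count} makes the total bound $\Theta(\epsilon_{\mathrm{id}})$ — optimal up to the unavoidable ID floor and the bounded $1/q$ factor — while any $p$ substantially smaller lets the $\alpha^{-p}$ term dominate and inflates the bound, and any $p$ substantially larger buys nothing. I would therefore phrase the conclusion in terms of this order-of-magnitude ($\sim$) optimality rather than exact minimization, keeping the sub-leading $1/q$ dependence and the integrality corrections inside the $O(1)$ and $\lceil\cdot\rceil$ already in play.
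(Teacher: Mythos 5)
Your proposal is correct and follows essentially the same route the paper intends: bound the $q$-dependent prefactor of the ID term by the constant $1 + 4\pi c_0 R_{\mathrm{pxy}}^2$, equate the resulting $\epsilon_{\mathrm{id}}$-floor to the geometrically decaying multipole term, take logarithms to obtain \eqref{eq:errors:proxy_count}, and recover $q \sim 2p(p+1)$ from the spherical-design count. Your added caveat --- that the bound has no interior minimum and the statement is really about the smallest $p$ achieving the $\Theta(\epsilon_{\mathrm{id}})$ floor --- is an accurate and welcome sharpening of the paper's informal ``balancing'' language.
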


In practice, the constants $c_0, c_1$ can be determined starting from
\Cref{thm:errors:ids} in a more precise manner. However, it is unclear if this
is desired, as it may be prohibitively expensive to provide good estimates
for the geometry terms inherent in their definition~\eqref{eq:errors:constants}.

\begin{remark}
\reva{The presented results and the following analysis can be applied per-level to
construct estimates for the full hierarchical direct solver. In particular, the
results from \Cref{thm:errors:ids} can be applied to the skeletonized source and target
point clusters $X^{(\text{s})}_i$ and $Y^{(\text{s})}_j$, respectively, with no
modifications. Therefore, the asymptotic dependence on the proxy radius and
proxy count will remain the same.}
\end{remark}

\subsection{Global Matrix Bounds from Cluster Interactions}
\label{ssc:errors:global}

Without a-priori knowledge of the skeletonization procedure, we can make standard
estimates for the forward and solution error inherent in the approximation. For
example, we have that~\cite[Section 5]{Ho2012}
\[
\begin{aligned}
\frac{\|\vect{b} - \vect{b}_\epsilon\|_2}{\|\vect{b}\|_2} & \coloneqq
\frac{\|\vect{b} - \matr{A}_\epsilon \vect{\sigma}\|_2}{\|\vect{b}\|_2}
\le \epsilon_{\text{id}} \kappa(\matr{A}), \\
\frac{\|\vect{\sigma} - \vect{\sigma}_\epsilon\|_2}{\|\vect{\sigma}\|_2}
& \coloneqq
\frac{\|\vect{\sigma} - \matr{A}^{-1}_\epsilon \vect{b}\|_2}{\|\vect{\sigma}\|_2}
\le 2 \epsilon_{\text{id}} \frac{\kappa(\matr{A})}{1 - \epsilon \kappa(\matr{A})},
\end{aligned}
\]
where $\kappa(\matr{A})$ is the $\ell^2$-norm condition number of the matrix $\matr{A}$.
These errors are especially important for the ill-conditioned single-layer case, where
the condition number is high. We can also show that a good approximation of
$\matr{A}_\epsilon$ does not necessarily imply a good approximation of $\matr{A}^{-1}$
due to~\cite[Section 6.4]{Gillman2012}
\[
\begin{aligned}
&
\matr{A}^{-1} - \matr{A}^{-1}_\epsilon
= -\matr{A}^{-1} (\matr{A} - \matr{A}_\epsilon) \matr{A}^{-1}_\epsilon \\
\implies &
\|\matr{A}^{-1} - \matr{A}^{-1}_\epsilon\|_2
\le \|\matr{A}^{-1}\|_2 \|\matr{A}^{-1}_\epsilon\|_2
  \|\matr{A} - \matr{A}_\epsilon\|_2,
\end{aligned}
\]
i.e.\ the error in the inverse can be large if the norms of the true or the approximate
inverse are large. From our numerical experiments in~\Cref{sc:results}, the dominant
source of error in using the approximate inverse to solve the equations appears
to be the conditioning of the original operator. However, \cite[Figure 6]{Gillman2012}
shows that the approximation error of the inverse can become significantly larger
than the approximation error of the operator itself for specific domains.

For a more specific error estimate, we consider the case of the single-level
proxy-based skeletonization approximant from~\Cref{sc:schemes}. We have that
\[
\begin{aligned}
\|\matr{A} - \matr{A}_\epsilon\|_2 =
\|\matr{A} - (\matr{D} + \matr{L} \matr{S} \matr{R})\|_2,
\end{aligned}
\]
where the diagonal blocks are always evaluated exactly and do not contribute
to the error. To simplify notation below, we introduce the notation $\matr{A}^{(\text{d})}
\coloneqq \matr{A} - \matr{D}$, i.e.,\ the matrix $\matr{A}$ with zero diagonal blocks.
Analogously, we define $\matr{K}^{(\text{d})}(X, Y)$ as the kernel interactions without
the diagonal blocks.

\emph{Separating source and target errors}. A first step consists of reducing
the block-wise error to one that separates the source and target skeletonization.
This is required to account for the fact that the source and target
skeletonization matrices are obtained from different constructions, as detailed
in~\Cref{ssc:schemes:proxy}. This gives
\[
\begin{aligned}
\|\matr{A} - \matr{A}_\epsilon\|_2 & =
\|
  \matr{K}^{(\text{d})}(X, Y) \matr{W}(Y)
  - \matr{L} \matr{K}^{(\text{d})}(X^{(\text{s})}, Y^{(\text{s})})
    \matr{W}(Y^{(\text{s})}) \matr{R}
\|_2 \\
& \le
\frac{1}{2} (1 + \|\matr{R}\|_2)
\|
  \matr{K}^{(\text{d})}(X, Y) \matr{W}(Y)
  - \matr{L} \matr{K}^{(\text{d})}(X^{(\text{s})}, Y) \matr{W}(Y)
\|_2 \\
& +
\frac{1}{2} (1 + \|\matr{L}\|_2)
\|
  \matr{K}^{(\text{d})}X, Y) \matr{W}(Y)
  - \matr{K}^{(\text{d})}(X, Y^{(\text{s})}) \matr{W}(Y^{(\text{s})}) \matr{R}
\|_2,
\end{aligned}
\]
where we have introduced additional terms and applied the triangle inequality
to obtain the second bound. By applying the triangle inequality again, we can
further decompose the error into block-wise terms
\[
\begin{aligned}
\|\matr{A} - \matr{A}_\epsilon\|_2 & \le
\frac{1}{2} (1 + \|\matr{R}\|_2) \sum_{i = 0}^N
\|
  \matr{K}^{(\text{d})}(X_i, Y) \matr{W}(Y)
  - \matr{L}_i \matr{K}^{(\text{d})}(X_i^{(\text{s})}, Y) \matr{W}(Y)
\|_2 \\
& +
\frac{1}{2} (1 + \|\matr{L}\|_2) \sum_{j = 0}^N
\|
  \matr{K}^{(\text{d})}(X, Y_j) \matr{W}(Y_j)
  - \matr{K}^{(\text{d})}(X, Y_j^{(\text{s})}) \matr{W}(Y_j^{(\text{s})}) \matr{R}_j
\|_2 \\
& \coloneqq
\frac{1}{2} (1 + \|\matr{R}\|_2) \sum_{i = 0}^N \|\matr{E}_{\text{tgt}, i}(X_i, Y)\|_2
+
\frac{1}{2} (1 + \|\matr{L}\|_2) \sum_{j = 0}^N \|\matr{E}_{\text{src}, j}(X, Y_j)\|_2.
\end{aligned}
\]

We can then focus on the problem of skeletonizing a single row or column for
target or source skeletonization, respectively. Taking a $\|\matr{E}_{\text{src}, j}\|_2$ term,
we further separate the near-field and far-field interactions to obtain
\[
\begin{aligned}
&
\|
  \matr{K}^{(\text{d})}(X, Y_j) \matr{W}(Y_j)
  - \matr{K}^{(\text{d})}(X, Y_j^{(\text{s})}) \matr{W}(Y_j^{(\text{s})}) \matr{R}_j
\|_2 \\
\le \,\, &
\|
  \matr{K}(X_j^{(\text{near})}, Y_j) \matr{W}(Y_j)
  - \matr{K}(X_j^{(\text{near})}, Y_j^{(\text{s})}) \matr{W}(Y_j^{(\text{s})}) \matr{R}_j
\|_2 \\
+ \,\, &
\|\matr{K}(X_j^{(\text{far})}, Y_j) \matr{W}(Y)_j
  - \matr{K}(X_j^{(\text{far})}, Y_j^{(\text{s})}) \matr{W}(Y_j^{(\text{s})}) \matr{R}_j\|_2 \\
\le \,\, &
\epsilon_{\text{id}}
+ \|\matr{K}(X_j^{(\text{far})}, Y_j) \matr{W}(Y_j)
    - \matr{K}(X_j^{(\text{far})}, Y_j^{(\text{s})}) \matr{W}(Y_j^{(\text{s})}) \matr{R}_j
  \|_2,
\end{aligned}
\]
where the first term is bounded by the error in the interpolative decomposition
due to being a (row) subset of the factorized matrix in~\Cref{mm:schemes:source_skeletonization}.
The $X^{(near)}_j$ near-field and $X^{(far)}_j$ far-field target subsets are
constructed as in~\eqref{eq:schemes:proxy_near_far}.

\emph{Introducing the proxy points}. The second step consists of introducing the
proxy points into the error incurred from the far-field interactions above. This
results in a model of the error that takes into account the number of proxy points
$q$ and the proxy radius $r_{\text{pxy}, j}$. We introduce an operator $\matr{T}$
such that, pointwise,
\[
\matr{K}(X_j^{(\text{far})}, \vect{y})
= \matr{T}(X_j^{(\text{far})}, P_j) \matr{K}(P_j, \vect{y})
  + \matr{E}_{\text{src}, j}(\vect{y}; q, r_{\text{pxy}, j}),
\]
for all $\vect{y} \in Y_j$ and an error matrix $\matr{E}_{\text{src}, j}$ that
depends on $(q, r_{\text{pxy}, j})$. This operator is based on the Poisson
kernel~\eqref{eq:prelim:poisson} and the error term is made explicit in
\Cref{ssc:errors:multipole}. Then, the far-field term can be bounded by
\[
\begin{aligned}
&
\|\matr{K}(X_j^{(\text{far})}, Y_j) \matr{W}(Y_j)
  - \matr{K}(X_j^{(\text{far})}, Y_j^{(\text{s})}) \matr{W}(Y_j^{(\text{s})}) \matr{R}_j\|_2 \\
\le \,\, &
\|\matr{T}(X_j^{(\text{far})}, P_j)\|_2 \|
\matr{K}(P_j, Y_j) \matr{W}(Y_j) - \matr{K}(P_j, Y_j^{(\text{s})}) \matr{W}(Y_j^{(\text{s})}) \matr{R}_j\|_2 \\
+ & (1 + \|\matr{R}_j\|_2) \|\matr{W}(Y_j)\|_2 \|\matr{E}_{\text{src}, j}(Y_j)\|_2 \\
\le \,\, &
\|\matr{T}(X_j^{(\text{far})}, P_j)\|_2 \epsilon_{\text{id}}
+
(1 + \|\matr{R}_j\|_2) \|\matr{W}(Y_j)\|_2 \|\matr{E}_{\text{src}, j}(Y_j)\|_2,
\end{aligned}
\]
where the first term can be found in the proxy matrix from
\Cref{mm:schemes:source_skeletonization}. As such, its error can also be
bounded by the tolerance of the Interpolative Decomposition. Therefore, it
remains to provide a bound for the error $\matr{E}_{\text{src}, j}(Y_j)$
introduced by the operator $\matr{T}$ in the above construction.

For target skeletonization, we can analogously obtain
\[
\begin{aligned}
&
\|\matr{K}(X_i, Y_i^{(\text{far})}) \matr{W}(Y_i^{(\text{far})})
  - \matr{L}_i \matr{K}(X_i^{(\text{s})}, Y_i^{(\text{far})}) \matr{W}(Y_i^{(\text{far})})\|_2 \\
\le \,\, &
\|\matr{W}^{-1}(P_i)\|_2 \|\matr{T}(P_i, Y_i^{(\text{far})})\|_2 \|\matr{W}(Y_i^{(\text{far})})\|_2 \\
\times \,\, &
\|\matr{K}(X_i, P_i) \matr{W}(P_i)
  - \matr{L}_i \matr{K}(X_i^{(\text{s})}, P_i) \matr{W}(P_i)\|_2 \\
+ \,\, &
(1 + \|\matr{L}_i\|_2) \|\matr{W}(Y_i^{(\text{far})})\|_2
  \|\matr{E}_{\text{tgt}, i}(X_i)\|_2,
\end{aligned}
\]
where we can see that the first term corresponds to the proxy matrix construction
from~\Cref{mm:schemes:target_skeletonization}. Putting the two results together,
we have that
\begin{equation} \label{eq:errors:clusterwise}
\begin{aligned}
\|\matr{E}_{\text{tgt}, i}(X_i, Y)\|_2
& \le
  (1 + a_{0, i} \|\matr{T}(P_i, Y_i^{(\text{far})})\|_2)
  \epsilon_{\text{id}}
+ a_{1, i} \|\matr{E}_{\text{tgt}, i}(X_i)\|_2, \\
\|\matr{E}_{\text{src}, j}(X, Y_j)\|_2
& \le (1 + b_{0, j} \|\matr{T}(X_j^{(\text{far})}, P_j)\|_2)
  \epsilon_{\text{id}}
+ b_{1, j} \|\matr{E}_{\text{src}, j}(Y_i)\|_2, \\
\end{aligned}
\end{equation}
where
\[
\begin{aligned}
\|\matr{E}_{\text{tgt}, i}(X_i)\|_2 & =
\|\matr{K}(X_i, Y_i^{(\text{far})})
  - \matr{K}(X_i, P_i) \matr{T}(P_i, Y_i^{(\text{far})})\|_2, \\
\|\matr{E}_{\text{src}, j}(Y_j)\|_2 & =
\|\matr{K}(X_j^{(\text{far})}, Y_j)
  - \matr{T}(X_j^{(\text{far})}, P_j) \matr{K}(P_j, Y_j)\|_2, \\
\end{aligned}
\]
and the constants are given by
\begin{equation} \label{eq:errors:constants}
\begin{aligned}
a_{0, i} & \coloneqq
  \|\matr{W}^{-1}(P_i)\|_2 \|\matr{W}(Y_i^{(\text{far})})\|_2, \\
b_{0, j} & \coloneqq 1, \\
c_0 & \coloneqq \max_{0 \le i \le N} (a_{0, i}, b_{0, i}),
\end{aligned}
\qquad
\begin{aligned}
a_{1, i} & \coloneqq
  (1 + \|\matr{L}_i\|_2) \|\matr{W}(Y_i^{(\text{far})})\|_2, \\
b_{1, j} & \coloneqq
  (1 + \|\matr{R}_j\|_2) \|\matr{W}(Y_j)\|_2, \\
c_1 & \coloneqq \max_{0 \le j \le N} (a_{1, i}, b_{1, i}).
\end{aligned}
\end{equation}

Then, it remains to find an estimate for the source and target far-field
errors, denoted as $\|\matr{E}_{\text{src}, j}(Y_i)\|_2$ and
$\|\matr{E}_{\text{tgt}, i}(X_j)\|_2$, respectively, and the norm of the
$\matr{T}$ operators. Bounds on the remaining quantities are given by the
ID~\Cref{thm:prelim:id} ($\matr{L}$ and $\matr{R}$) and the geometry (the
diagonal $\matr{W}$ matrices).

\subsection{Cluster Multipole-based Error Estimates}
\label{ssc:errors:multipole}

In this section, we give an estimate for the source error $\varepsilon_{s, j}$
and the target error $\varepsilon_{t, i}$ for an arbitrary cluster. This is
achieved by making use of multipole expansions of the kernels in question.
For the Laplace case, we direct the reader to~\cite{Greengard1987} for the
two-dimensional expansions, to~\cite{Greengard1997} for the corresponding
three-dimensional expansions, and to~\cite{Wala2020} for
extensions to the expansion of the QBX regularized kernels. The estimates focus
on the source error $\epsilon_{s, j}$ in the three-dimensional case, but the
target error $\epsilon_{t, j}$ and the two-dimensional cases can be obtained
analogously.

We consider the Poisson Integral Formula~\eqref{eq:prelim:poisson_integral},
written for the harmonic function $g_{\vect{y}}(\vect{x}) \coloneqq K(\vect{x},
\vect{y})$ as
\begin{equation} \label{eq:errors:poisson}
K(\vect{x}, \vect{y}) = \int_{\Sigma_{\text{pxy}, i}}
  P(\vect{x}, \vect{p}) K(\vect{p}, \vect{y}) \dx[\vect{p}],
\end{equation}
where $\vect{x} \in X_i^{(\text{far})}$ and $\vect{y} \in Y_i$. By construction,
we have that $r_{\text{src}, j} < r_{\text{pxy}, j} < r_{\text{far}, i}$,
where $r_{\text{far}, j} \coloneqq \min_{\vect{x} \in X^{(\text{far})}} \|\vect{x}\|_2$.
Therefore, the expression above is well-defined, and we can construct a discretization
of~\eqref{eq:errors:poisson} that defines the $\vect{T}$ operator. An error
estimate based on this bound is given below.

\begin{lemma} \label{lemma:errors:multipole}
Consider a cluster $j$ with $\vect{x} \in X_i^{(\text{far})}, \vect{y} \in Y_j$
with a proxy surface $\Sigma_{\text{pxy}, j}$ centered at $\vect{c}_{\text{src}, j}$
with a radius $r_{\text{pxy}, j} \coloneqq \alpha \max_{\vect{y} \in Y_j} \|\vect{y} -
\vect{c}_{\text{src}, j}\|_2$, as defined in~\Cref{ssc:schemes:proxy}. Let
$(w_k, \vect{p}_k)$ be a quadrature rule with $q$ points on $\Sigma_{\text{pxy}, j}$
that integrates spherical harmonics of order $2 p$ exactly on the proxy surface. Then,
we have that the pointwise source error $\epsilon_{s, j}$ is given by
\[
\left|
K(\vect{x}, \vect{y}) - \sum_{k = 0}^{q - 1} T(\vect{x}, \vect{p}_k) K(\vect{p}_k, \vect{y})
\right| \le \frac{1}{4 \pi r_{\text{pxy}, j}} \frac{1}{\alpha - 1} \frac{1}{\alpha^p},
\]
with $T(\vect{x}, \vect{p}_k) \coloneqq P(\vect{x}, \vect{p}_k) w_k$, where
$P(\vect{x}, \vect{p}_k)$ is the pointwise evaluation of the Poisson
kernel~\eqref{eq:prelim:poisson}.
\end{lemma}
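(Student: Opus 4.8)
The plan is to identify the left-hand side with a spherical quadrature error on $\Sigma_{\mathrm{pxy}, j}$ and then exploit the exactness of the rule on spherical harmonics of degree up to $2p$, together with the expansions of the Poisson kernel and the Laplace Green function collected in \Cref{ssc:prelim:addition}.

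First I would reduce to a quadrature error. Centering coordinates at $\vect{c}_{\mathrm{src}, j}$ and writing $R := r_{\mathrm{pxy}, j}$, the hypotheses give $\|\vect{y}\|_2 \le r_{\mathrm{src}, j} = R/\alpha < R$ for $\vect{y} \in Y_j$ and $\|\vect{x}\|_2 > R$ for $\vect{x}$ in the far field, so $\vect{z} \mapsto K(\vect{z}, \vect{y})$ is harmonic on $\mathbb{R}^3 \setminus \overline{\mathbb{B}^2_R}$ and decays at infinity. The Poisson integral identity \eqref{eq:prelim:poisson_integral} (equivalently \eqref{eq:errors:poisson}) then gives $K(\vect{x}, \vect{y}) = \int_{\Sigma_{\mathrm{pxy}, j}} P(\vect{x}, \vect{p}) K(\vect{p}, \vect{y}) \dx[S_{\vect{p}}]$ exactly, so with $T(\vect{x}, \vect{p}_k) = P(\vect{x}, \vect{p}_k) w_k$ the quantity to be bounded is precisely the quadrature error of $F(\vect{p}) := P(\vect{x}, \vect{p}) K(\vect{p}, \vect{y})$ on the proxy sphere.

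Next I would split off the Green-function tail, which supplies the stated bound as the leading term. Expanding $K(\cdot, \vect{y}) = G(\cdot, \vect{y})$ about $\vect{c}_{\mathrm{src}, j}$ via \eqref{eq:prelim:laplace_expansion} with $r_p = R > r_y$, each term of the series is a spherical harmonic of degree $n$ in $\hat{\vect{p}}$ carrying the factor $(r_y/R)^n$; writing $K = K^{\le p} + K^{>p}$ for the truncation at degree $p$ and its tail, $|P_n| \le 1$ and $r_y/R \le 1/\alpha$ give $\|K^{>p}(\cdot, \vect{y})\|_{L^\infty(\Sigma_{\mathrm{pxy}, j})} \le \tfrac{1}{4\pi R} \sum_{n > p} \alpha^{-n} = \tfrac{1}{4\pi r_{\mathrm{pxy}, j}} \tfrac{1}{\alpha - 1} \tfrac{1}{\alpha^p}$. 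Because $P(\vect{x}, \cdot)$ is nonnegative on $\Sigma_{\mathrm{pxy}, j}$ for $\vect{x}$ outside the ball and integrates to $1$, the continuous part $\int P \cdot K^{>p}$ of the error is already bounded by exactly this quantity. For what is left I would expand $P(\vect{x}, \cdot)$ via \eqref{eq:prelim:poisson_expansion} into $P^{\le p} + P^{>p}$ (each $P_\ell$ a degree-$\ell$ spherical harmonic in $\hat{\vect{p}}$): then $P^{\le p}(\vect{x}, \cdot) K^{\le p}(\cdot, \vect{y})$ is a spherical polynomial of degree $\le 2p$, hence integrated exactly and contributing nothing, while the cross terms $P^{\le p}K^{>p}$, $P^{>p}K^{\le p}$, $P^{>p}K^{>p}$ and the scalar defect $\sum_k w_k P(\vect{x}, \vect{p}_k) - 1$ are controlled using $|P_\ell|, |P_n| \le 1$, the reproducing identity $\int_{\mathbb{S}^2} \tfrac{2\ell+1}{4\pi} P_\ell(\hat{\vect{x}} \cdot \hat{\vect{p}}) P_n(\hat{\vect{p}} \cdot \hat{\vect{y}}) \dx[S_{\vect{p}}] = \delta_{\ell n} P_\ell(\hat{\vect{x}} \cdot \hat{\vect{y}})$, positivity of the (equal) weights with $\sum_k w_k = 4\pi R^2$, and the observation that only spherical-harmonic degrees exceeding $2p$ survive the quadrature, so that in the terms containing $K^{>p}$ the Green-function index is forced past $p$ and picks up a further geometric factor $\alpha^{-(p+1)}$.

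The main obstacle is this last step, specifically the term $P^{>p}K^{\le p}$: a naive $L^\infty$ bound for the Poisson-kernel tail on $\Sigma_{\mathrm{pxy}, j}$ degrades as $\vect{x}$ approaches the proxy sphere, so it must be estimated through the exactness/orthogonality structure rather than by bare norm bounds, using crucially that $\vect{x}$ lies strictly outside the proxy ball and that $r_{\mathrm{src}, j} < r_{\mathrm{pxy}, j}$; this contribution is still of order $\alpha^{-p}$, with a polynomial-in-$p$ and geometry-dependent prefactor of the sort absorbed into the constant $c_1$ of \Cref{thm:errors:ids}. The dominant behavior is $\|K^{>p}(\cdot, \vect{y})\|_{L^\infty(\Sigma_{\mathrm{pxy}, j})}$, which yields the claimed estimate; the auxiliary norms $\|\matr{T}\|$, $\|\matr{R}\|$, $\|\matr{W}\|$ play no role here and are accounted for separately in \Cref{ssc:errors:global}.
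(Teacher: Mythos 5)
Your proposal follows essentially the same route as the paper's proof: reduce to a quadrature error via the exterior Poisson integral identity, expand both the Poisson kernel and the Green function in spherical harmonics about $\vect{c}_{\mathrm{src}, j}$, use the exactness of the degree-$2p$ rule together with orthogonality so that the $P^{\le p}K^{\le p}$ part reproduces the order-$p$ truncated multipole expansion, and bound what remains by the standard multipole tail estimate of Greengard--Rokhlin. Your explicit bookkeeping of the cross terms $P^{\le p}K^{>p}$, $P^{>p}K^{\le p}$, $P^{>p}K^{>p}$ is in fact more careful than the paper, which lumps these into $E_{\mathrm{quad}} + E_{\mathrm{trunc}}$ and bounds only that combination (equal to the multipole tail) rather than the untruncated quantity appearing in the lemma; your honest concession that the $P^{>p}K^{\le p}$ term degrades as $\vect{x}$ approaches the proxy sphere and contributes an extra prefactor is a real point, not a defect of your argument relative to the paper's. (One small slip: for the exterior problem the Poisson kernel integrates to $r_{\mathrm{pxy},j}/r_x \le 1$ rather than to $1$, but your bound only needs the inequality.)
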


\begin{proof}
We start by choosing a quadrature scheme on the sphere $\Sigma_{\text{pxy}, j}$,
centered at $\vect{c}_{\text{src}, j}$ with radius $r_{\text{pxy}, j}$ that can
exactly integrate spherical harmonics up to order $2 p$. We assume that the
quadrature rule is a ``spherical design'', such that the weights are given by
\[
w_k \equiv w \coloneqq \frac{4 \pi r^2_{\text{pxy}, j}}{q}.
\]

An example for such a spherical quadrature rule is given in~\cite{Womersley2018},
with a number of nodes that grows quadratically in the required order $p$. Note that,
on a circle, the standard trapezoidal rule satisfies all the requirements. Using
the quadrature rule given by such a spherical design, we can write
\[
\begin{aligned}
K(\vect{x}, \vect{y}) & =
  \sum_{k = 0}^{q - 1} P(\vect{x}, \vect{p}_k) K(\vect{p}_k, \vect{y}) w_k
  + E_{\text{quad}}(\vect{x}, \vect{y}) \\
  & =
  \sum_{k = 0}^{q - 1} T(\vect{x}, \vect{p}_k) K(\vect{p}_k, \vect{y})
  + E_{\text{quad}}(\vect{x}, \vect{y}),
\end{aligned}
\]
where we denote the residual as $E_{\text{quad}}(\vect{x}, \vect{y})$. We now
introduce the expansions of the Laplace kernel $K$ and the Poisson kernel $P$ given
by~\eqref{eq:prelim:laplace_expansion} and~\eqref{eq:prelim:poisson_expansion},
respectively. Using an order $p$ expansion, we can write
\[
\begin{aligned}
& \sum_{k = 0}^{q - 1} P(\vect{x}, \vect{p}_k) K(\vect{p}_k, \vect{y}) w_k
  + E_{\text{quad}}(\vect{x}, \vect{y}) \\
= &
\sum_{k = 0}^{q - 1} \left(
\sum_{l = 0}^p \sum_{n = -l}^l
  \frac{r_{\text{pxy}, j}^{l - 1}}{r_x^{l + 1}}
  Y^n_l(\hat{\vect{x}}) Y^{-n}_l(\hat{\vect{p}}_k)
\right)
\left(
\sum_{j = 0}^p \sum_{m = -j}^j
  \frac{1}{2 j + 1}
  \frac{r_y^j}{r_{\text{pxy}, j}^{j + 1}}
  Y^m_j(\hat{\vect{p}}_k) Y^{-m}_j(\hat{\vect{y}})
\right) w_k \\
+ & E_{\text{quad}}(\vect{x}, \vect{y}) + E_{\text{trunc}}(\vect{x}, \vect{y}),
\end{aligned}
\]
where the error due to truncation is denoted as $E_{\text{trunc}}(\vect{}, \vect{y})$.
Expanding the sums and re-arranging the terms gives
\[
\begin{aligned}
&
\sum_{l = 0}^p \sum_{n = -l}^l
\sum_{j = 0}^p \sum_{m = -j}^j
  \frac{1}{2 j + 1}
  \frac{r_{\text{pxy}, j}^{l - 1}}{r_{\text{pxy}, j}^{j + 1}}
  \frac{r_y^j}{r_x^{l + 1}}
  Y^n_l(\hat{\vect{x}}) Y^{-m}_j(\hat{\vect{y}})
  \left(
  \sum_{k = 0}^{q - 1} Y^{-n}_l(\hat{\vect{p}}_k) Y^m_j(\hat{\vect{p}}_k) w_k
  \right) \\
= &
  \sum_{l = 0}^p \sum_{m = -l}^l
  \frac{1}{2 l + 1}
  \frac{r_y^l}{r_x^{l + 1}}
  Y^m_l(\hat{\vect{x}}) Y^{-m}_l(\hat{\vect{y}}),
\end{aligned}
\]
where we have used the fact that $(\vect{p}_k, w_k)$ can exactly integrate
the spherical harmonics up to order $2 p$. This allowed the use of the orthogonality
of the spherical harmonics, i.e.,
\[
  \sum_{k = 0}^{q - 1} Y^{-n}_l(\hat{\vect{p}}_k) Y^m_l(\hat{\vect{p}}_k) w_k
  = r_{\text{pxy}, j}^2 \delta_{l, l} \delta_{m, n}.
\]

We can see that we are left with the standard truncated expansion of the
Laplace kernel. Applying~\cite[Theorem 3.2]{Greengard1997} gives the desired
bound. Therefore, we have that
\[
|E_{\text{quad}}(\vect{x}, \vect{y}) + E_{\text{trunc}}(\vect{x}, \vect{y})| \le
  \frac{1}{4 \pi (r_x - r_y)} \left(\frac{r_y}{r_x}\right)^{p + 1},
\]
or, given that $r_{\text{pxy}, j} < \min_{\vect{x} \in X_j^{\text{far}}}
\|\vect{x} - \vect{c}_{\text{src}, j}\|_2$ by definition of $X_j^{(\text{far})}$,
\[
|E_{\text{quad}}(\vect{x}, \vect{y}) + E_{\text{trunc}}(\vect{x}, \vect{y})|
  \le \frac{1}{4 \pi r_{\text{pxy}, j}} \frac{\alpha^{-p}}{\alpha - 1}.
\]
\qed
\end{proof}

This completes the proof of the multipole-based error estimate for
$\|\vect{E}_{\text{src}, j}(Y_i)\|$ in~\eqref{eq:errors:clusterwise}. It remains
to provide a bound for the source operator $T(\vect{x}, \vect{y})$, which is based
on the chosen quadrature rule $(\vect{p}_k, w_k)$ as shown below.

\begin{proposition} \label{prop:errors:t_estimate}
Consider the construction from~\Cref{lemma:errors:multipole}.
Under the assumptions of~\Cref{lemma:errors:multipole}, the operator $\vect{T}$
used in source error estimate~\eqref{eq:errors:clusterwise} is bounded by
\[
\|\vect{T}(X^{(\text{far})}_j, P_j)\|_2 \le C \frac{4 \pi r^2_{\text{pxy}, j}}{q}.
\]
\end{proposition}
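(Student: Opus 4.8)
The plan is to strip the equal-weight structure of the spherical design out of $\matr{T}$ and then bound what remains by a pointwise estimate on the Poisson kernel together with a crude Frobenius inequality. Write $R \coloneqq r_{\mathrm{pxy},j}$. The spherical design underlying \Cref{lemma:errors:multipole} has the single weight $w \coloneqq 4\pi R^2/q$, so by the definition of $\matr{T}$ there, $T(\vect{x}_i,\vect{p}_k) = P(\vect{x}_i,\vect{p}_k)\,w$, i.e.\ $\matr{T}(X^{(\mathrm{far})}_j, P_j) = w\,\matr{P}$ with $\matr{P} \coloneqq [\,P(\vect{x}_i,\vect{p}_k)\,]_{i,k}$ the matrix of pointwise values of the Poisson kernel \eqref{eq:prelim:poisson}. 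Everything then reduces to bounding $\|\matr{P}\|_2$, since multiplying back by $w$ produces exactly the claimed prefactor $4\pi R^2/q$.

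First I would establish a uniform pointwise bound on the kernel over the two point sets. For $\vect{x}\in X^{(\mathrm{far})}_j$ put $r_x \coloneqq \|\vect{x}-\vect{c}_{\mathrm{src},j}\|_2$, so that $r_x \ge r_{\mathrm{far},j} > R$ by construction of $X^{(\mathrm{far})}_j$; for any $\vect{p}\in\Sigma_{\mathrm{pxy},j}$ we have $\|\vect{x}-\vect{p}\|_2 \ge r_x - R$ and $|r_x^2-R^2| = (r_x-R)(r_x+R)$, whence
\[
|P(\vect{x},\vect{p})| = \frac{1}{4\pi R}\,\frac{|r_x^2-R^2|}{\|\vect{x}-\vect{p}\|_2^3}
\le \frac{1}{4\pi R}\,\frac{r_x+R}{(r_x-R)^2} \le \frac{C_{\mathrm{P}}}{R^2},
\]
where $C_{\mathrm{P}}$ depends only on the separation ratio $r_{\mathrm{far},j}/R > 1$ between $X^{(\mathrm{far})}_j$ and the proxy sphere, which is fixed by the tree construction of \Cref{ssc:schemes:clusters}.

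I would then pass from this entrywise control to the spectral norm by a Frobenius bound: since $\matr{P}$ is $|X^{(\mathrm{far})}_j|\times q$,
\[
\|\matr{P}\|_2 \le \|\matr{P}\|_{\mathrm{F}} \le \sqrt{|X^{(\mathrm{far})}_j|\, q}\;\frac{C_{\mathrm{P}}}{R^2}.
\]
The far field comprises essentially all targets, so $q \le |X^{(\mathrm{far})}_j|$ and $\sqrt{|X^{(\mathrm{far})}_j|\,q} \le |X^{(\mathrm{far})}_j|$; multiplying the previous display by $w = 4\pi R^2/q$ then gives $\|\matr{T}(X^{(\mathrm{far})}_j, P_j)\|_2 \le C\,\frac{4\pi R^2}{q}$ with $C = C_{\mathrm{P}}\,|X^{(\mathrm{far})}_j|/R^2$, a constant depending only on the geometry and the sizes of $X_j$ and $Y_j$, and not on $q$.

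The step I expect to require the most care is the passage to the spectral norm: the crude Frobenius estimate only delivers the stated $q^{-1}$ rate because the matrix is extremely wide, $|X^{(\mathrm{far})}_j| \gg q$. A bound that is uniform in the relative sizes of $q$ and $|X^{(\mathrm{far})}_j|$ instead calls for the exact-integration structure of the design: splitting $P(\vect{x},\cdot)$ into its order-$p$ spherical-harmonic truncation \eqref{eq:prelim:poisson_expansion} plus a remainder, the orthogonality relation $\sum_k Y^{-n}_l(\hat{\vect{p}}_k)\,Y^{m}_{l'}(\hat{\vect{p}}_k)\,w = R^2\,\delta_{ll'}\,\delta_{mn}$ already used in the proof of \Cref{lemma:errors:multipole} shows that the sampling matrix $\matr{Y}_p$ of spherical harmonics satisfies $\matr{Y}_p\matr{Y}_p^{*} = (q/4\pi)\,\matr{I}$, which together with a Frobenius bound on the (target-only) coefficient matrix and the geometric tail estimate of \Cref{lemma:errors:multipole} pins down the dependence on $q$ precisely. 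The other place where the geometry genuinely enters — and which must be tracked carefully — is the pointwise Poisson bound above, whose constant degenerates as $X^{(\mathrm{far})}_j$ approaches $\Sigma_{\mathrm{pxy},j}$.
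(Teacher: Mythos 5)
Your proposal is correct and follows essentially the same route as the paper: factor out the equal spherical-design weight $w = 4\pi r_{\mathrm{pxy},j}^2/q$ and bound the remaining Poisson-kernel matrix $\matr{P}(X^{(\mathrm{far})}_j, P_j)$ by a constant, using the boundedness of the Poisson kernel for $r_x > r_{\mathrm{pxy},j}$. You are in fact more explicit than the paper, which simply asserts $\|\matr{P}\|_2 \le C$ without the pointwise estimate, the Frobenius step, or your (legitimate) caveat that the naive Frobenius route reintroduces a $\sqrt{q}$ factor that must be absorbed using $q \le |X^{(\mathrm{far})}_j|$.
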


\begin{proof}
From \Cref{lemma:errors:multipole}, we have that $T(\vect{x}, \vect{p}_k)
\coloneqq w_k P(\vect{x}, \vect{p}_k)$. As the Poisson kernel is bounded
when $r_x > r_{\text{pxy}, j}$, we have
\[
\|\vect{T}(X^{(\text{far})}_j, P_j)\|_2
  \le w \|\vect{P}(X^{(\text{far})}_j, P_j)\|_2
  \le C \frac{4 \pi r^2_{\text{pxy}, j}}{q},
\]
for some constant $C > 0$. Note that, in practice, the far-field points are
close to the proxy surface, so the constant will be large. This can be controlled
by increasing the number of proxy points $q$, i.e.,\ the quadrature resolution.
\qed
\end{proof}

This completes the proof of \Cref{thm:errors:ids}. In~\Cref{ssc:errors:global},
we have decomposed the global error of the direct solver into cluster-level errors
and appropriate constants independent of the proxy point selection. Then, \Cref{lemma:errors:multipole} and \Cref{prop:errors:t_estimate} above give
appropriate bounds for the remaining terms and clarify the dependence on the
number of proxy points. In the case of the source skeletonization terms from~\eqref{eq:errors:clusterwise},
this leads to
\[
\begin{aligned}
& \|
  \matr{K}^{(\text{d})}(X_i, Y) \matr{W}(Y)
  - \matr{L}_i \matr{K}^{(\text{d})}(X_i^{(\text{s})}, Y) \matr{W}(Y)
\|_2 \\
\le\,\, &
  \left(1 + a_{0, i} \frac{4 \pi r^2_{\text{pxy}, j}}{q}\right)
  \epsilon_{\text{id}}
+ a_{1, i} \frac{1}{4 \pi r_{\text{pxy}, j}} \frac{1}{\alpha - 1} \frac{1}{\alpha^p},
\end{aligned}
\]
with constants defined by~\eqref{eq:errors:constants}. Bounds for the
target skeletonization terms in~\eqref{eq:errors:clusterwise} can be obtained
analogously to the analysis above.
Then, to construct the global estimate from~\Cref{thm:errors:ids}, we directly
take a maximum over all the clusters. Note that a two-dimensional estimate can be
obtained in an analogous fashion by using the appropriate kernel expansions
(see \cite{Greengard1987}) and the uniform trapezoidal quadrature rule.

\section{Numerical Results}
\label{sc:results}

We present below a series of experiments to verify the error estimate
presented in~\Cref{sc:errors}. We also show the accuracy and cost scaling behavior
of the resulting solver for a pair of standard Laplace boundary value problems in
two and three dimensions. In two dimensions, we use a geometry given by
\begin{equation} \label{eq:results:starfish}
(x, y) = [1 + A \sin (n + 1) \theta] \left(\cos \theta, \sin \theta \right),
\end{equation}
where $A = 0.25$ and $n = 16$ (see~\Cref{fig:results:geometry}a).
In three dimensions we use a standard torus described by
\begin{equation} \label{eq:results:torus}
(x, y, z) = ((a + b \cos \theta) \cos \phi, (a + b \cos \theta) \sin \phi, b \sin \theta),
\end{equation}
where $a = 10$ and $b = 2$ (see~\Cref{fig:results:geometry}b). Both of these
geometries take advantage of the adaptive apparatus of the QBX method described
in~\Cref{ssc:prelim:qbx} and allow testing the different parts of the direct
solver. To discretize the geometry, we use a Gauss-Legendre quadrature
with $4$ nodes in two dimensions and a Vioreanu--Rokhlin quadrature with 15
nodes~\cite{Vioreanu2014} in three dimensions on the \stageone{} and \stagetwo{}
discretizations (see~\Cref{ssc:prelim:qbx}). The \stagequad{} discretization is
not used in the following tests. The QBX local expansion is performed using
$p_{qbx} = 4$, in the manner described in~\cite{Wala2019}. For the two-dimensional
geometry, this gives \num{10240} degrees of freedom for the \stagetwo{}
discretization. In three dimensions, we have \num{24000} degrees of freedom for
the \stagetwo{} discretization. Using the hierarchical clustering described in
\Cref{ssc:schemes:clusters}, this results in a 6-level and a 4-level
skeletonization in two and three dimensions, respectively.

\begin{figure}[ht!]
\begin{subfigure}{0.5\linewidth}
\centering
\includegraphics[width=0.65\linewidth]{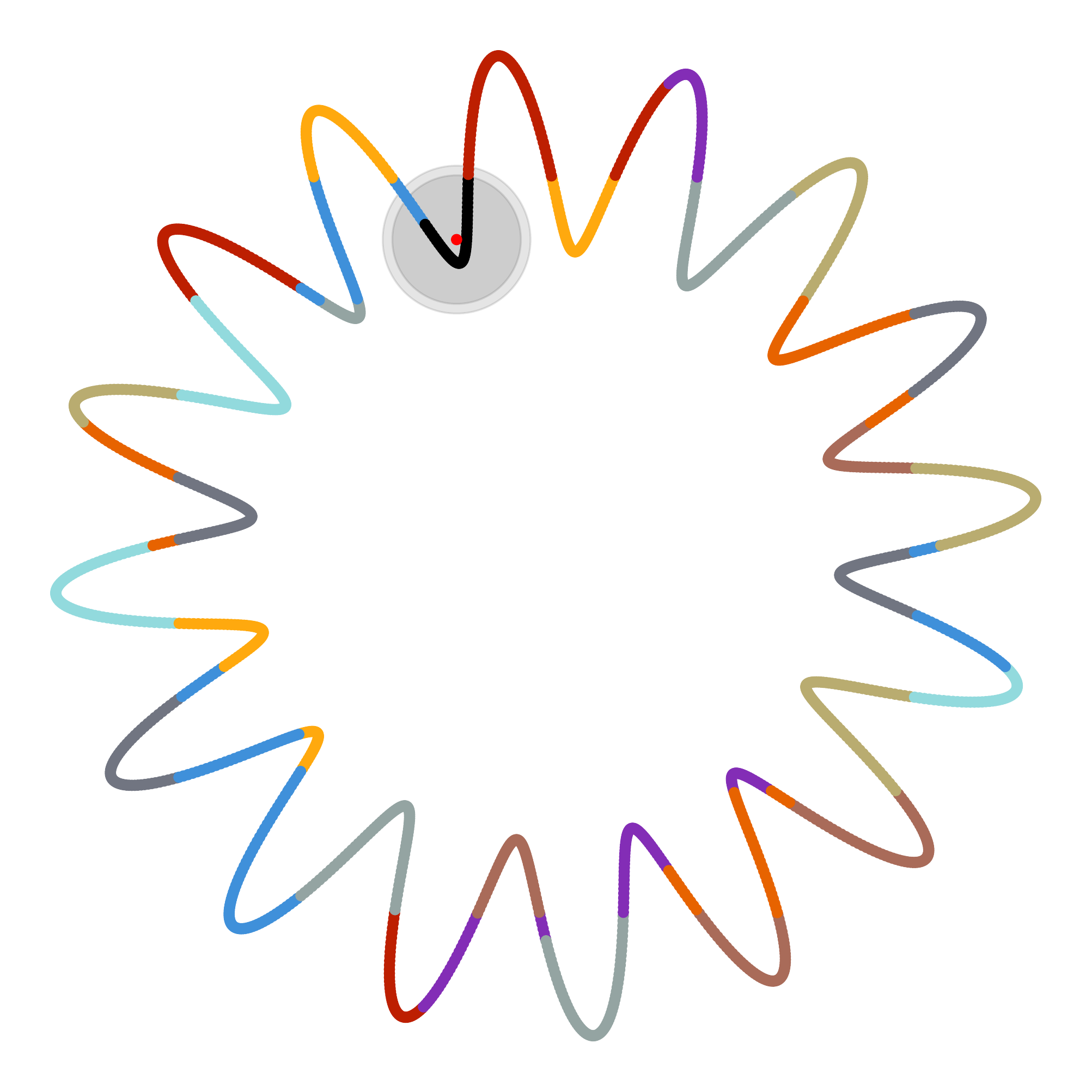}
\caption{}
\end{subfigure}%
\begin{subfigure}{0.5\linewidth}
\centering
\includegraphics[width=0.65\linewidth]{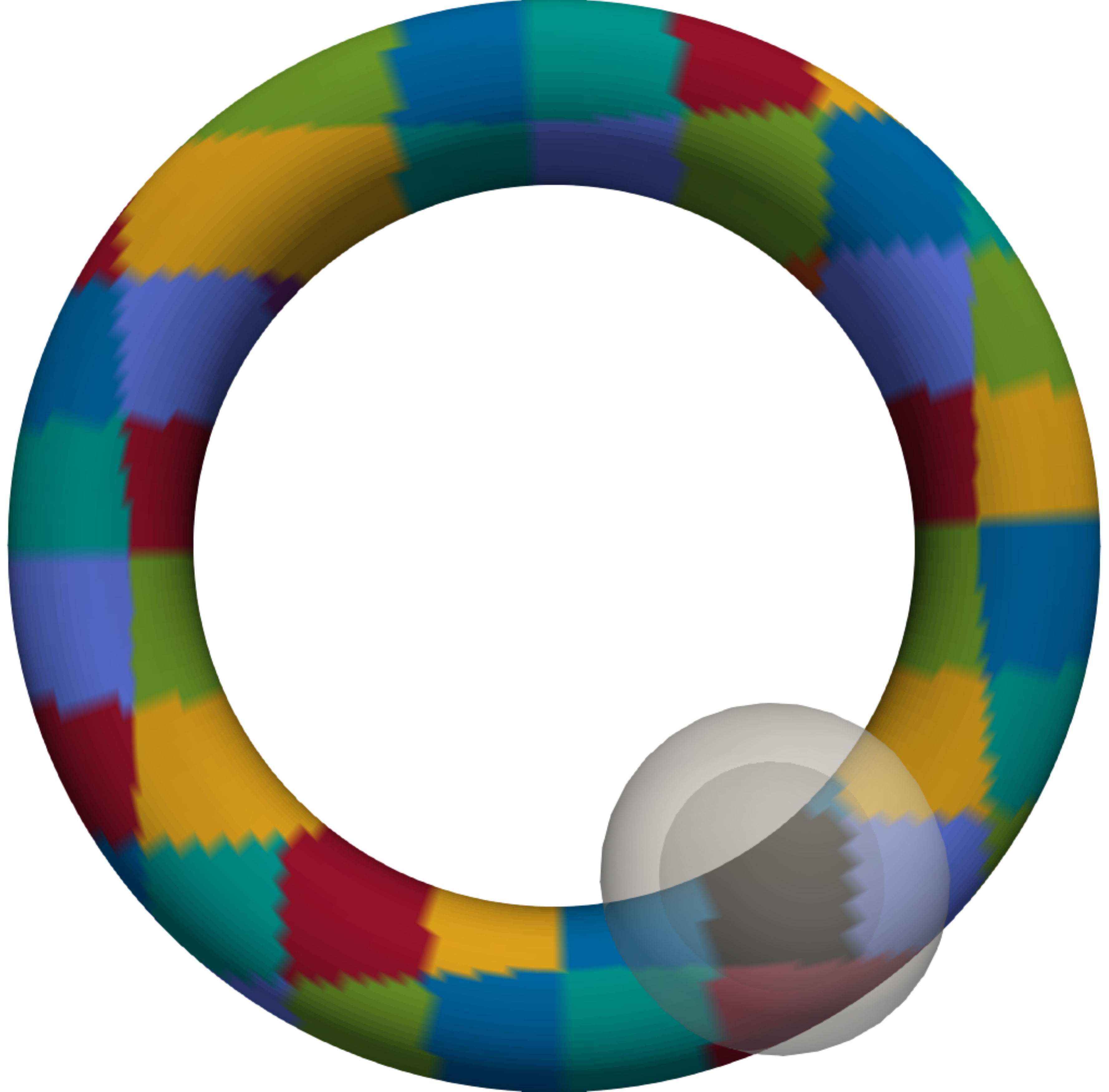}
\vspace{1em}
\caption{}
\end{subfigure}
\caption{Geometry in (a) 2D and (b) 3D. Different colors (not unique) denote different
  clusters and a representative proxy ball (gray) \revb{with a center
  $\vect{c}_{\text{src}}$ (red dot)} is shown on each geometry. The inner ball
  denotes the cluster bounding ball and the lighter outer circle denotes the proxy ball.}
\label{fig:results:geometry}
\end{figure}

Based on the quadtree/octree decomposition described
in~\Cref{ssc:schemes:skeletonization}, we observe cluster sizes at the leaf
level as shown in~\Cref{fig:results:clusters}. Notably, clusters
can vary in size by an order of magnitude due to the spatial structure of the
tree decomposition. While both the cluster size distribution and the geometry
have an impact on the error model developed in~\Cref{sc:errors}, they appear in
the various constants and have no effect on the asymptotic behavior that is
being verified here. Finally, we take a proxy radius factor of $\alpha = 1.15$,
as described in~\Cref{ssc:schemes:proxy}, with respect to a cluster radius that
also includes the QBX expansion balls.

To approximate the operator norms of the error from~\Cref{sc:errors} in a
computationally feasible manner for the matrix sizes involved, we generate a
uniformly random vector $\vect{\sigma}$ on $[-1, 1]$ and compute a reference
solution $\vect{b} = \matr{A} \vect{\sigma}$ by direct QBX-mediated point-to-point
evaluation (without FMM acceleration). Then, the relative errors
$E_{2, rel}(\vect{\sigma})$ are reported in the standard $\ell_2$ norm as
\begin{equation} \label{eq:results:rel_error_empirical}
E_{2, rel}(\vect{\sigma}) =
\frac{\|\vect{b} - \matr{A}_\epsilon \vect{\sigma}\|_2}{\|\vect{\sigma}\|_2},
\quad \vect{\sigma} \in \mathbb{R}^n,
\end{equation}
which, for each $\vect{\sigma}$, provide a lower bound for the operator norm of
$\matr{A} - \matr{A}_\epsilon$.

\begin{figure}[ht!]
\begin{subfigure}{0.5\linewidth}
\centering
\includegraphics[width=0.65\linewidth]{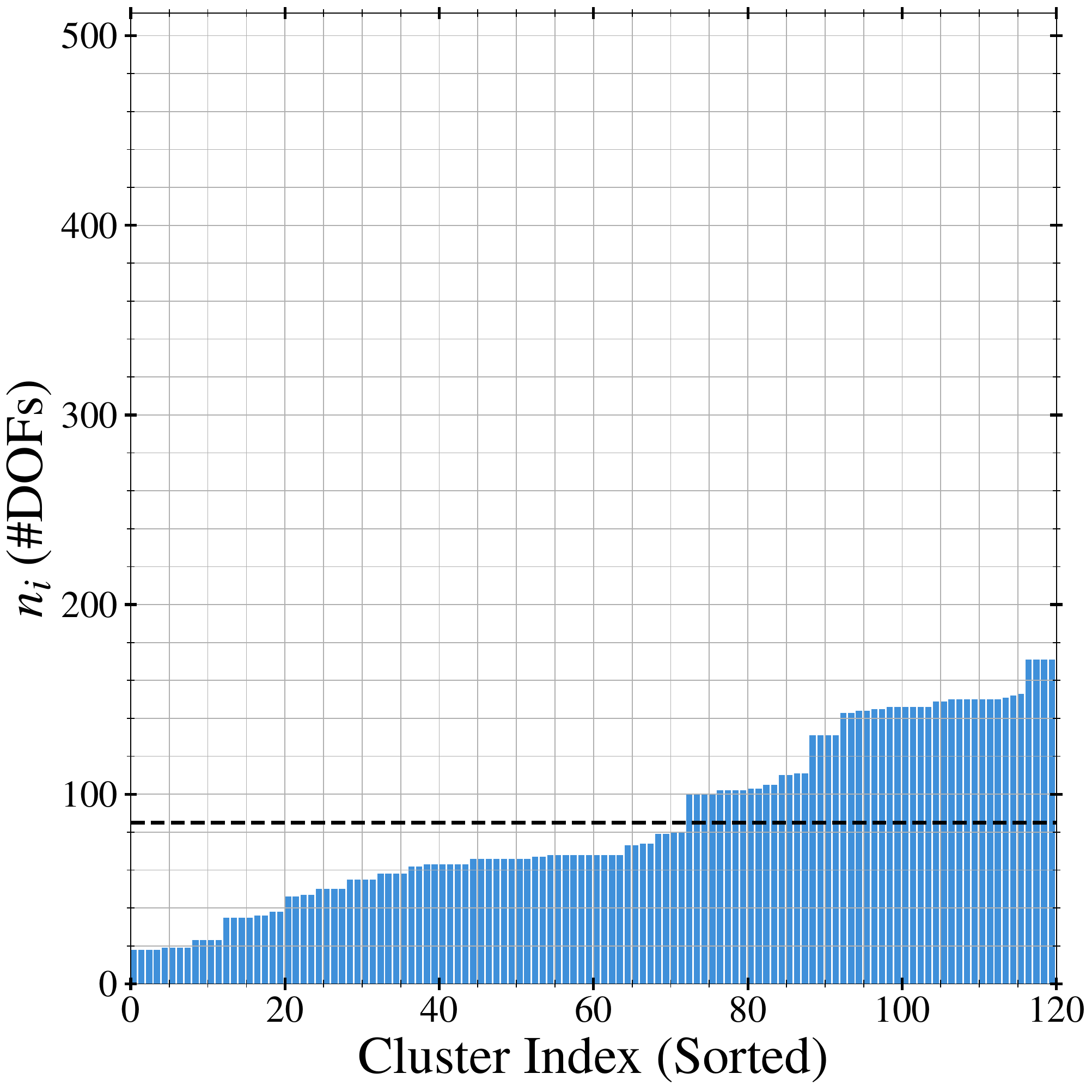}
\caption{2D.}
\end{subfigure}%
\begin{subfigure}{0.5\linewidth}
\centering
\includegraphics[width=0.65\linewidth]{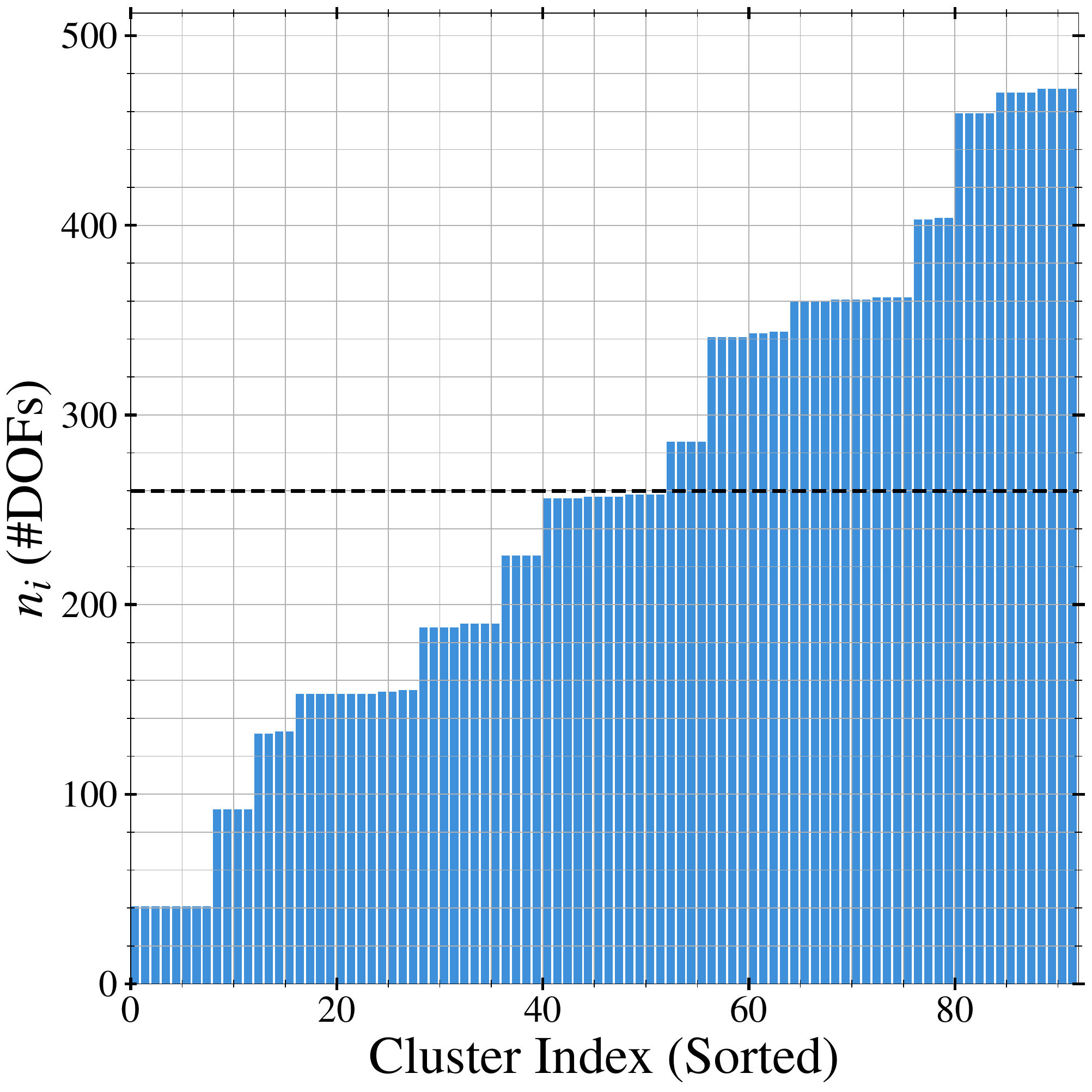}
\caption{3D}
\end{subfigure}
\caption{Sorted cluster size, where the dashed line denotes the mean cluster size.}
\label{fig:results:clusters}
\end{figure}

The convergence of the method with grid spacing is briefly shown
in~\Cref{ax:convergence}. In the following, we focus on showcasing the performance
of the direct solver and the analysis performed in \Cref{sc:errors}.

\subsection{Error Model Experiments}
\label{ssc:result:model}

We first present a series of experiments to compare the error model derived in
\Cref{sc:errors} with the empirical error data obtained from numerical simulations.
These experiments will focus on analyzing the error as a function of the proxy count $q$
and the proxy radius factor $\alpha$, defined by $\rpxy = \alpha\, \rtgt$. The
relevant theoretical results are given by~\Cref{thm:errors:ids}
and~\Cref{cor:errors:proxy_count}.

\subsubsection{Target-Proxy Weight Matrix}
\label{sssc:result:model:target_weight_matrix}

We start by examining the effect of introducing the weight matrix $\matr{W}(P)$
described in~\Cref{mm:schemes:target_skeletonization}. For this, we perform an
experiment that varies the ID tolerance $\epsilon_{\text{id}}$, while keeping the
remaining parameters fixed. We consider the three-dimensional case and take
$q = 192$ and $\alpha = 1.15$.

\begin{figure}[ht!]
\begin{subfigure}{0.5\linewidth}
\centering
\includegraphics[width=0.7\linewidth]{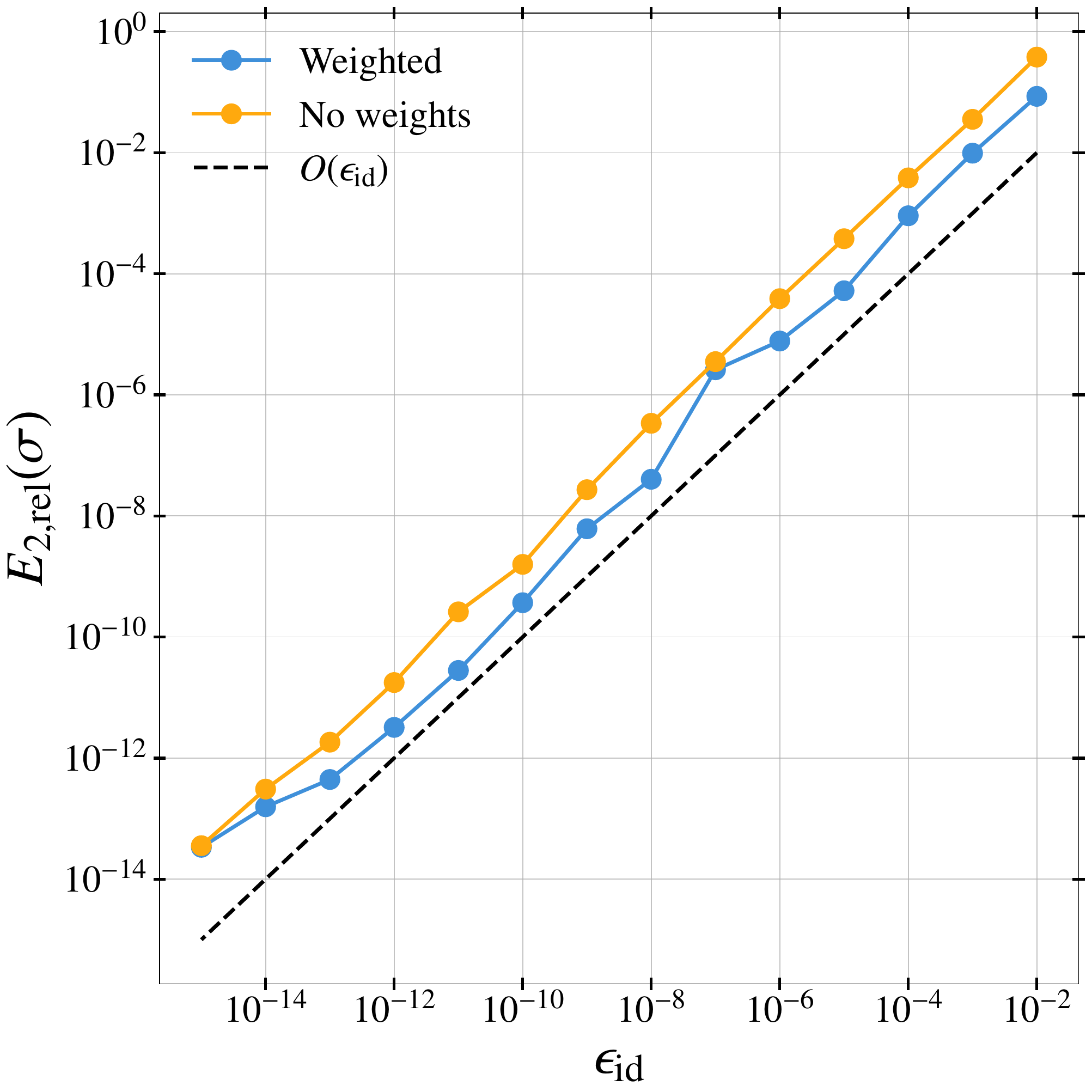}
\caption{3D single-layer ($\mathcal{S}$)}
\end{subfigure}%
\begin{subfigure}{0.5\linewidth}
\centering
\includegraphics[width=0.7\linewidth]{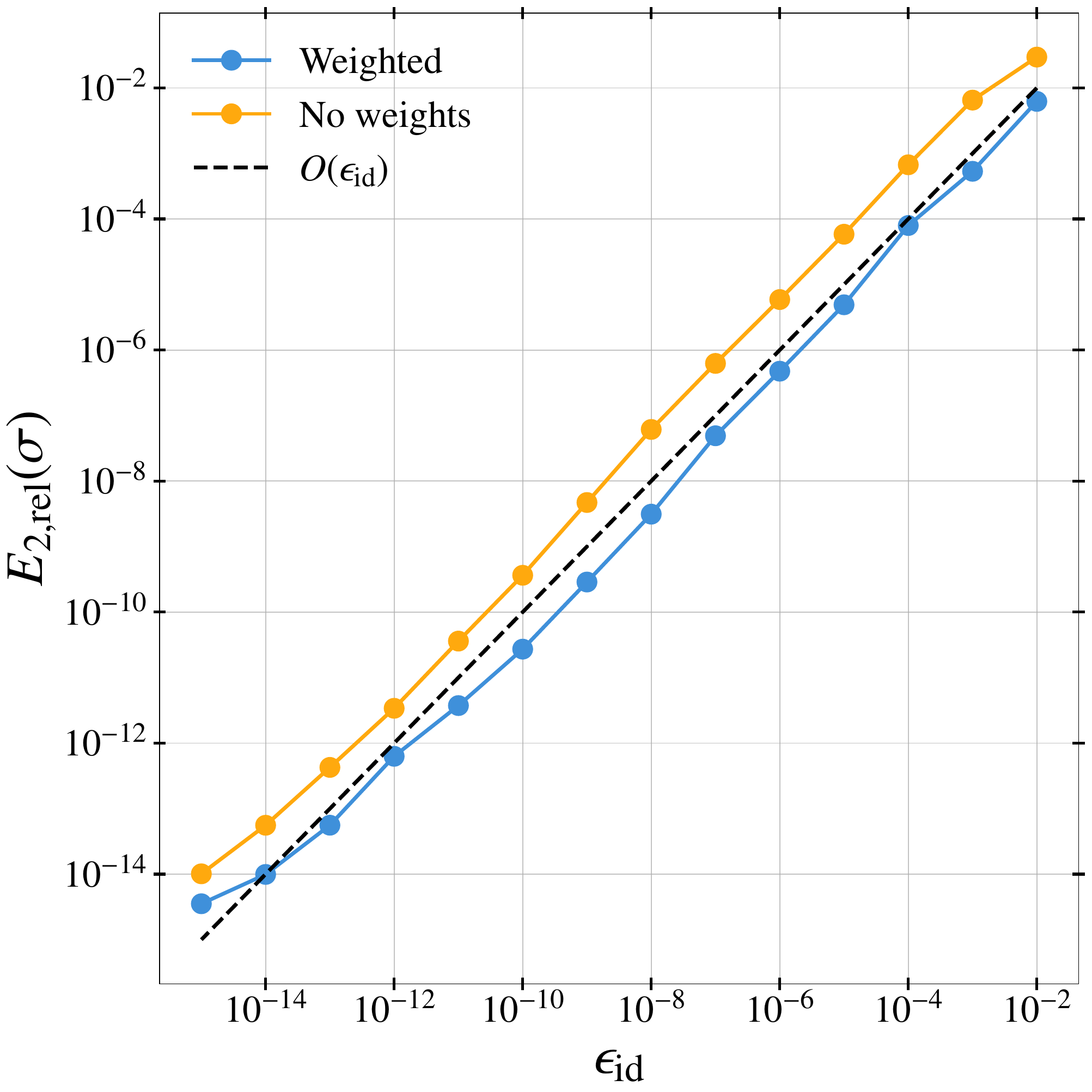}
\caption{3D double-layer ($-\sfrac{1}{2} \mathcal{I} + \mathcal{D}$)}
\end{subfigure}
\caption{Comparison of a direct solver using the additional weight matrix $\matr{W}(P)$
  when performing target skeletonization. All other weight matrices are kept in
  both cases. Results are shown for (a) a single-layer potential and (b) a
  double-layer potential on the chosen 3D geometry~\eqref{eq:results:torus}.}
\label{fig:results:target_weight_matrix}
\end{figure}

The results can be seen in \Cref{fig:results:target_weight_matrix} for a single-layer
and a double-layer potential. In the single-layer case, the difference is at most an
order of magnitude, but this gradually diminishes at small tolerances. In the
double-layer case we have a clearer improvement that is maintained at smaller
tolerances. However, in both cases, the additional benefit of $\matr{W}(P)$ is
clear. As such, this weight is added in all subsequent experiments.

\subsubsection{Error Estimates as a Function of Proxy Parameters}
\label{sssc:result:model:q_vs_error}

In this experiment, we vary the proxy count $q$ and the proxy radius factor $\alpha$,
while keeping the ID tolerance fixed. This removes the effects of the ID in the
proxy-based skeletonization from~\Cref{ssc:schemes:proxy} and focuses on the
multipole-based error estimates from~\Cref{ssc:errors:multipole}. As such, we
set $\epsilon_{\text{id}} = 10^{-15}$, i.e.,\ close to machine epsilon for
IEEE double precision floating point numbers. The empirical error data is obtained
from a skeletonization of a double-layer potential.

\begin{table}[ht]
\centering
\caption{Empirically determined constants $C_0, C_1$ in the error model.}
\label{tbl:results:constants}
\begin{tabular}{
  l
  |S[scientific-notation=true,round-mode=places,round-precision=3]
  |S[scientific-notation=true,round-mode=places,round-precision=3]} \toprule
& {$C_0$} & {$C_1$} \\\midrule
2D (single-layer) & 3.689422244809e+00 & 1.147006002096e-02 \\
2D (double-layer) & 1.005122937622e+00 & 4.678342024328e-04 \\
3D (single-layer) & 7.719297274962e-03 & 2.146727387609e-03 \\
3D (double-layer) & 5.314763845850e-03 & 1.581941072958e-05 \\
\bottomrule
\end{tabular}
\end{table}

In general, we cannot expect the data to reflect the constants from
\Cref{thm:errors:ids}, as they are upper bounds based on worst-case estimates.
Therefore, the modeled error shown in \Cref{fig:results:model_error} is constructed
from \Cref{cor:errors:global} as
\begin{equation} \label{eq:results:rel_error_model}
E_{2, model}(q) =
\frac{1}{2} (2 + \|\matr{L}\|_2 + \|\matr{R}\|_2) \left\{
\left(1 + C_0 c_0 \frac{4 \pi R_{\text{pxy}}^2}{q}\right) \epsilon_{\text{id}}
+ C_1 c_1 \frac{1}{4 \pi R_{\text{pxy}}} \frac{1}{\alpha - 1} \frac{1}{\alpha^p}
\right\},
\end{equation}
where the constants $C_0, C_1$ are determined empirically to match the magnitude
of the empirical error calculated using~\eqref{eq:results:rel_error_empirical}.
For this experiment, we determined the constants by a least squares fit over a
range of $\epsilon_{\text{id}}$ (as shown in \Cref{fig:results:model_proxy}),
with values reported in~\Cref{tbl:results:constants}. The results for this
experiment are shown in~\Cref{fig:results:model_error}. We observe that both
in two and three dimensions, the slope of the empirical data (dashed lines) follows
the theoretical model (full lines) in the asymptotic regime.

\begin{figure}[ht!]
\begin{subfigure}{0.5\linewidth}
\centering
\includegraphics[width=0.7\linewidth]{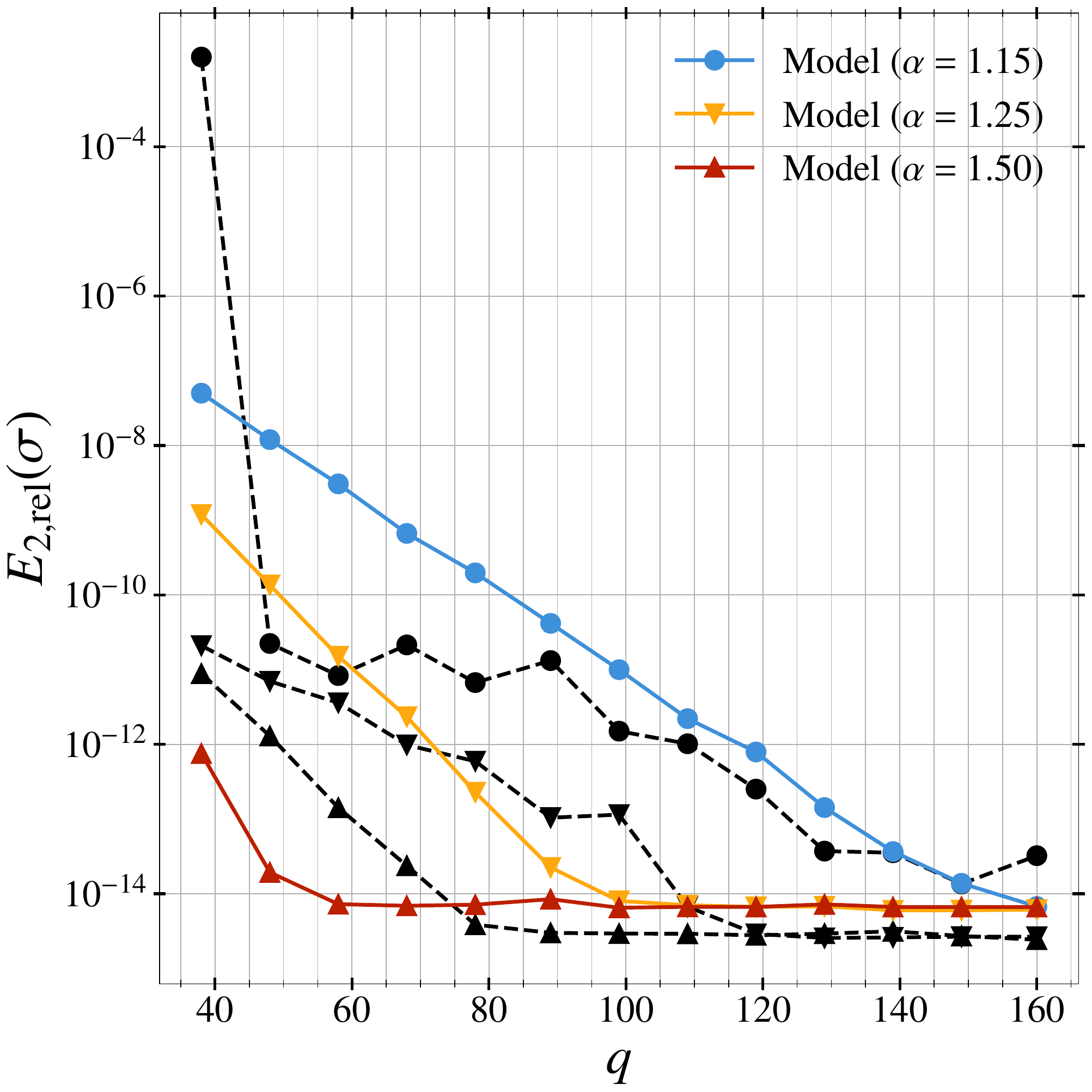}
\caption{2D double-layer ($-\sfrac{1}{2} \mathcal{I} + \mathcal{D}$)}
\end{subfigure}%
\begin{subfigure}{0.5\linewidth}
\centering
\includegraphics[width=0.7\linewidth]{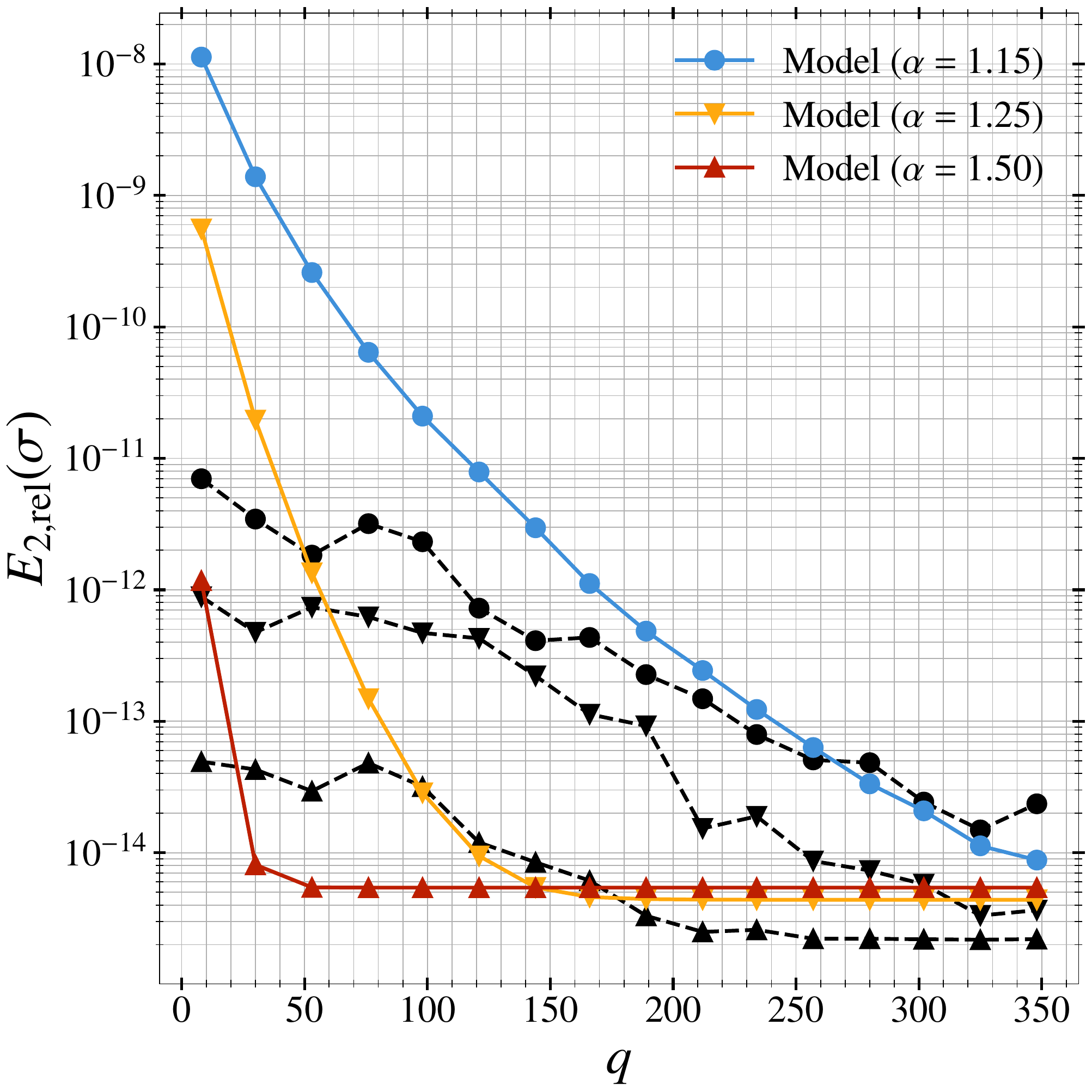}
\caption{3D double-layer ($-\sfrac{1}{2} \mathcal{I} + \mathcal{D}$)}
\end{subfigure}
\caption{Comparison of empirical error data (dashed) and the error model from
  \Cref{thm:errors:ids} (full) on a (a) two-dimensional and a (b)
  three-dimensional double-layer potential at $\epsilon_{\text{id}} = 10^{-15}$
  for varying proxy counts $q$ and proxy radius factors $\alpha$.}
\label{fig:results:model_error}
\end{figure}

\subsubsection{Proxy Estimates as a Function of ID Tolerance}
\label{sssc:result:model:eps_vs_q}

In this experiment, we examine the estimates for the proxy count $q$ as a function
of $\epsilon_{\text{id}}$ described by~\Cref{cor:errors:proxy_count}. The
proxy count is taken as $q = p (p + 1)$, in which the expansion order $p$ is
approximated by~\eqref{eq:errors:proxy_count}.

The constants from~\eqref{eq:errors:proxy_count} are also multiplied by the values
given in~\Cref{tbl:results:constants}, analogously to the previous section and
consistent with \eqref{eq:results:rel_error_model}. The empirical results in this
case are obtained by iteration: we start with a $q_{\text{empirical}} = 8$ and
increase it until the empirical error matches the tolerance $10 \epsilon_{\text{id}}$.
The tolerance is increased because the empirical error cannot always achieve
the exact value, as seen in~\Cref{fig:results:target_weight_matrix}.

\begin{figure}[ht!]
\begin{subfigure}{0.5\linewidth}
\centering
\includegraphics[width=0.7\linewidth]{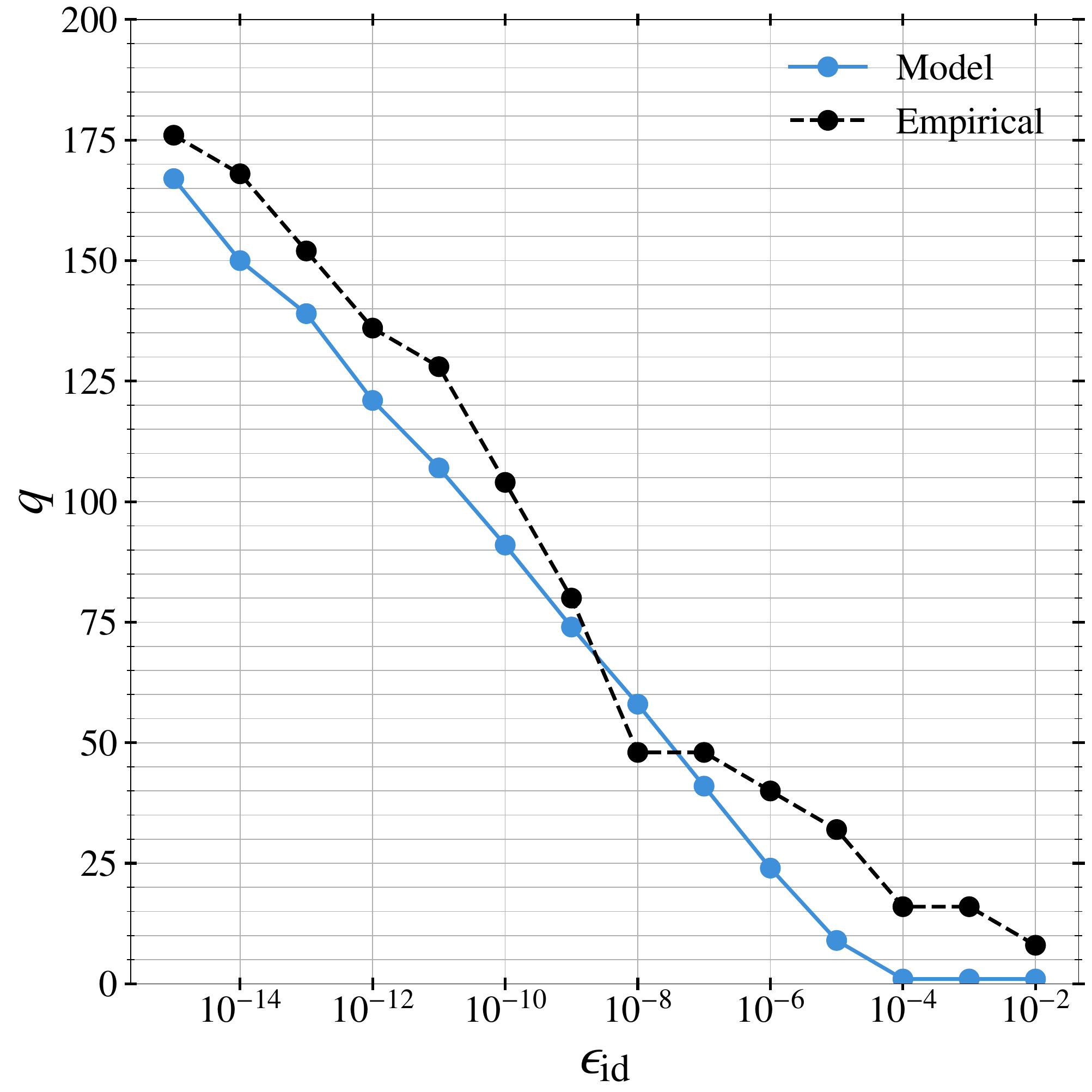}
\caption{2D single-layer ($\mathcal{S}$).}
\end{subfigure}%
\begin{subfigure}{0.5\linewidth}
\centering
\includegraphics[width=0.7\linewidth]{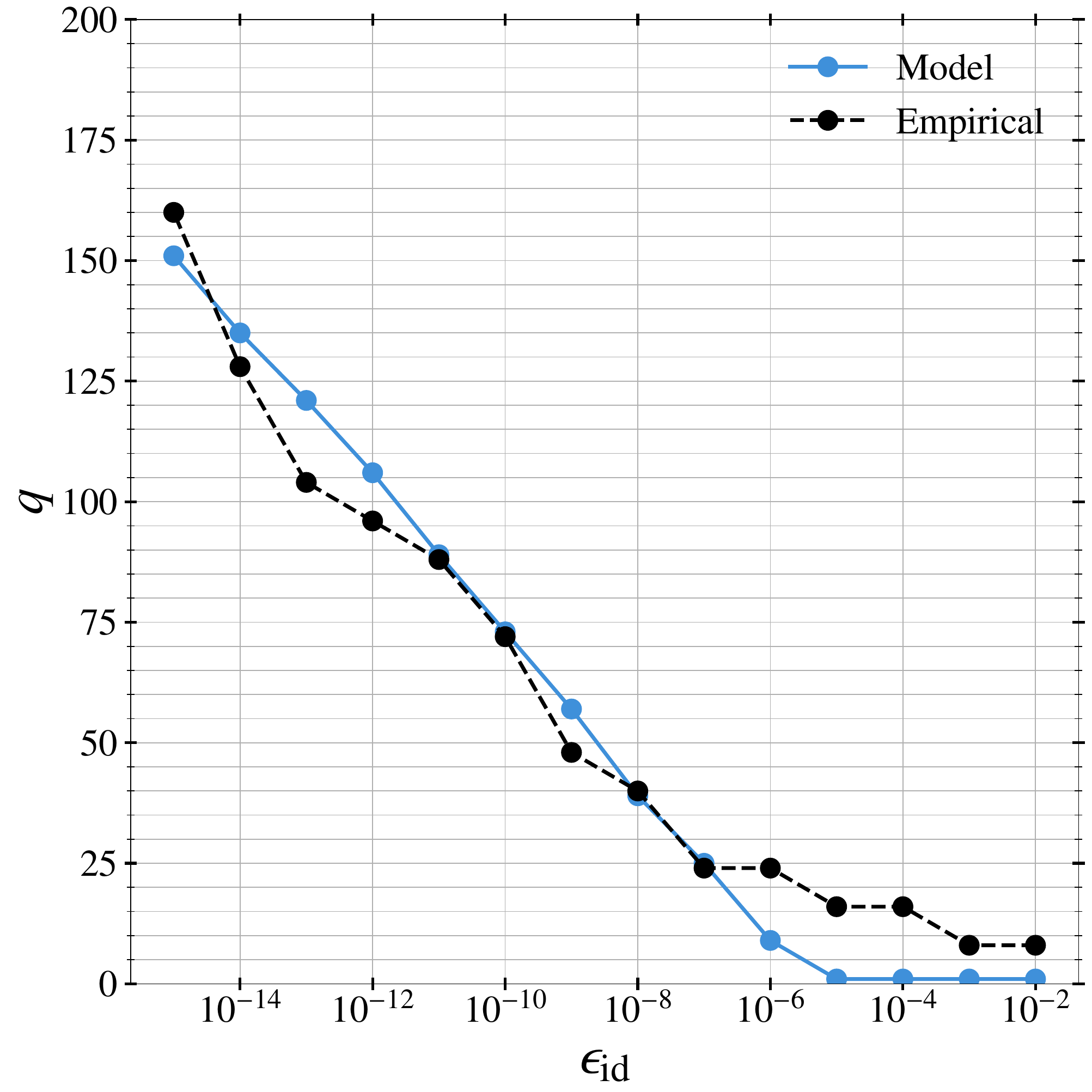}
\caption{2D double-layer ($-\sfrac{1}{2} \mathcal{I} + \mathcal{D}$).}
\end{subfigure}

\begin{subfigure}{0.5\linewidth}
\centering
\includegraphics[width=0.7\linewidth]{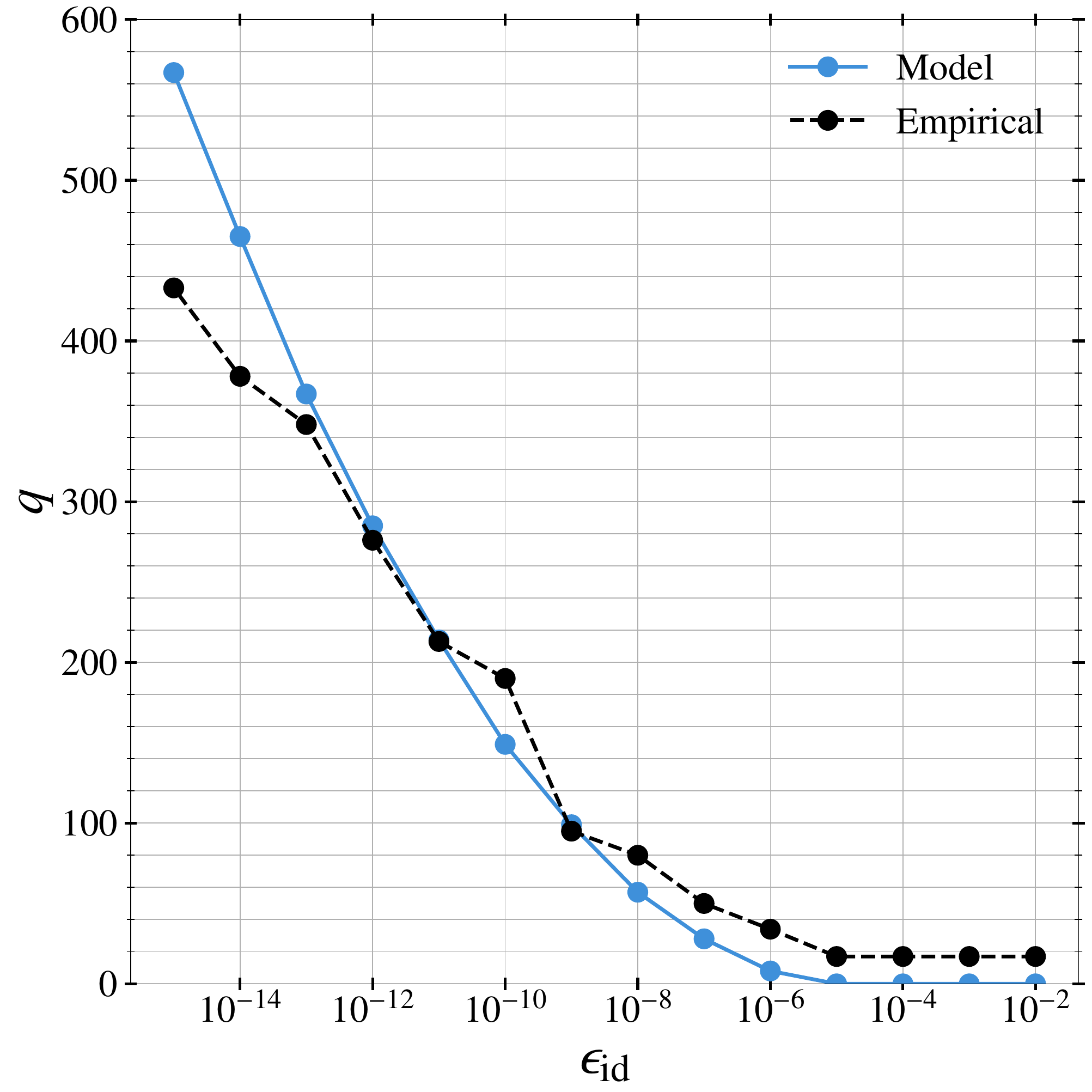}
\caption{3D single-layer ($\mathcal{S}$).}
\end{subfigure}%
\begin{subfigure}{0.5\linewidth}
\centering
\includegraphics[width=0.7\linewidth]{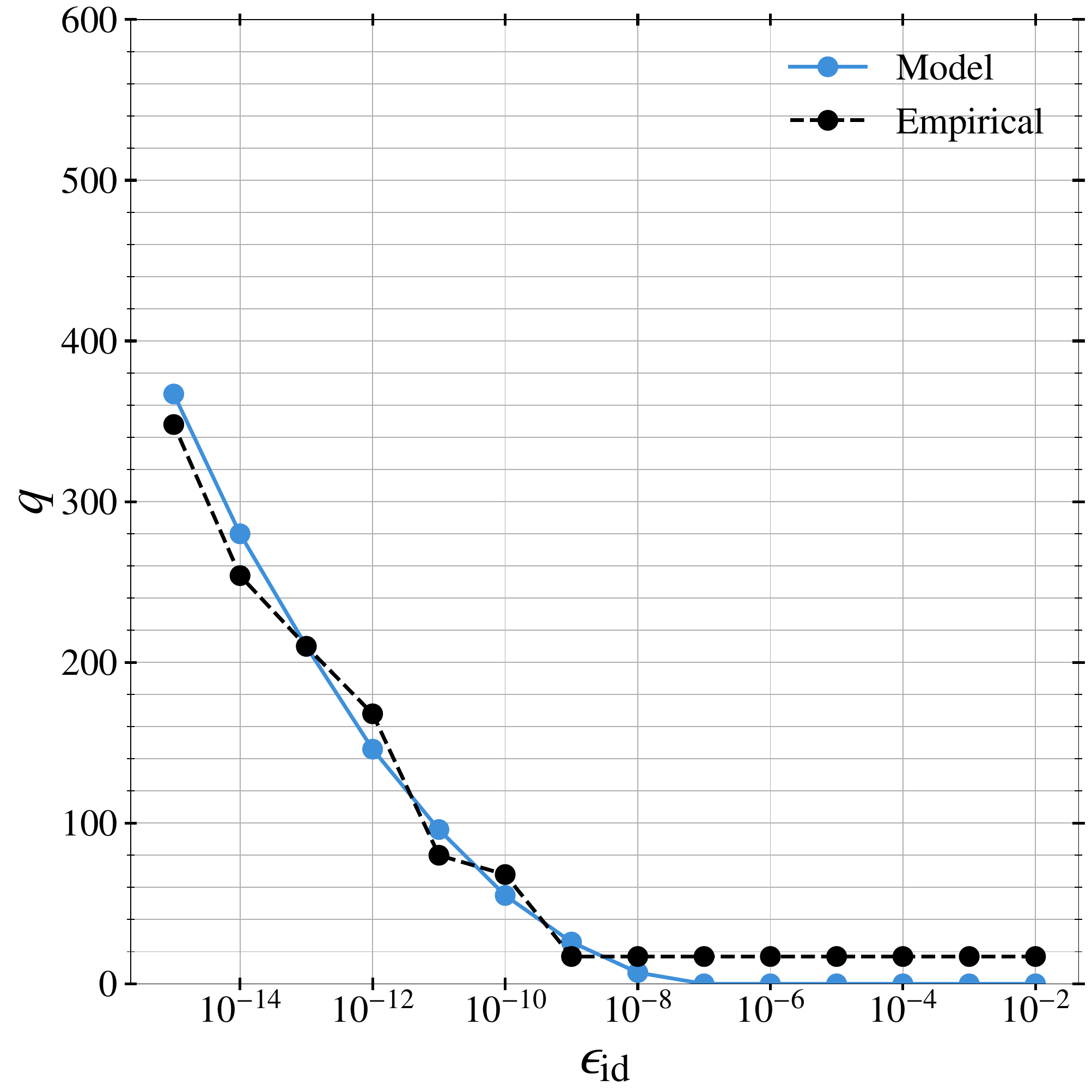}
\caption{3D double-layer ($-\sfrac{1}{2} \mathcal{I} + \mathcal{D}$).}
\end{subfigure}
\caption{Proxy count estimation based on the model from~\Cref{cor:errors:proxy_count} as
  a function of the ID tolerance for a (a,c) single-layer and (b,d) double-layer
  potential. The constants in the model are modified according
  to~\Cref{tbl:results:constants}.}
\label{fig:results:model_proxy}
\end{figure}

\Cref{fig:results:model_proxy} shows the results for both a single-layer and a
double-layer potential. We can see that in both cases the model gives a good
estimate of the required number of proxy points. This is true even for the
case of the double-layer potential, which was not technically included in the
analysis from~\Cref{sc:errors}.

\subsection{Forward Accuracy}
\label{ssc:result:forward}

We continue by verifying the asymptotic behavior of the forward error with respect
to the ID tolerance $\epsilon_{\text{id}}$. Given the ID tolerance, we compute
the proxy count using~\eqref{eq:errors:proxy_count} and the
constants from~\Cref{tbl:results:constants} as before. The proxy radius factor
is fixed at $\alpha = 1.15$. We use the relative error measure
\eqref{eq:results:rel_error_empirical}. As before, the solution vector
$\vect{\sigma}$ is randomly generated and the right-hand side $\vect{b}$ is
computed from $\vect{b} = \matr{A} \vect{\sigma}$ by direct point-to-point evaluation
of the QBX expansion.

\begin{figure}[ht!]
\begin{subfigure}{0.5\linewidth}
\centering
\includegraphics[width=0.7\linewidth]{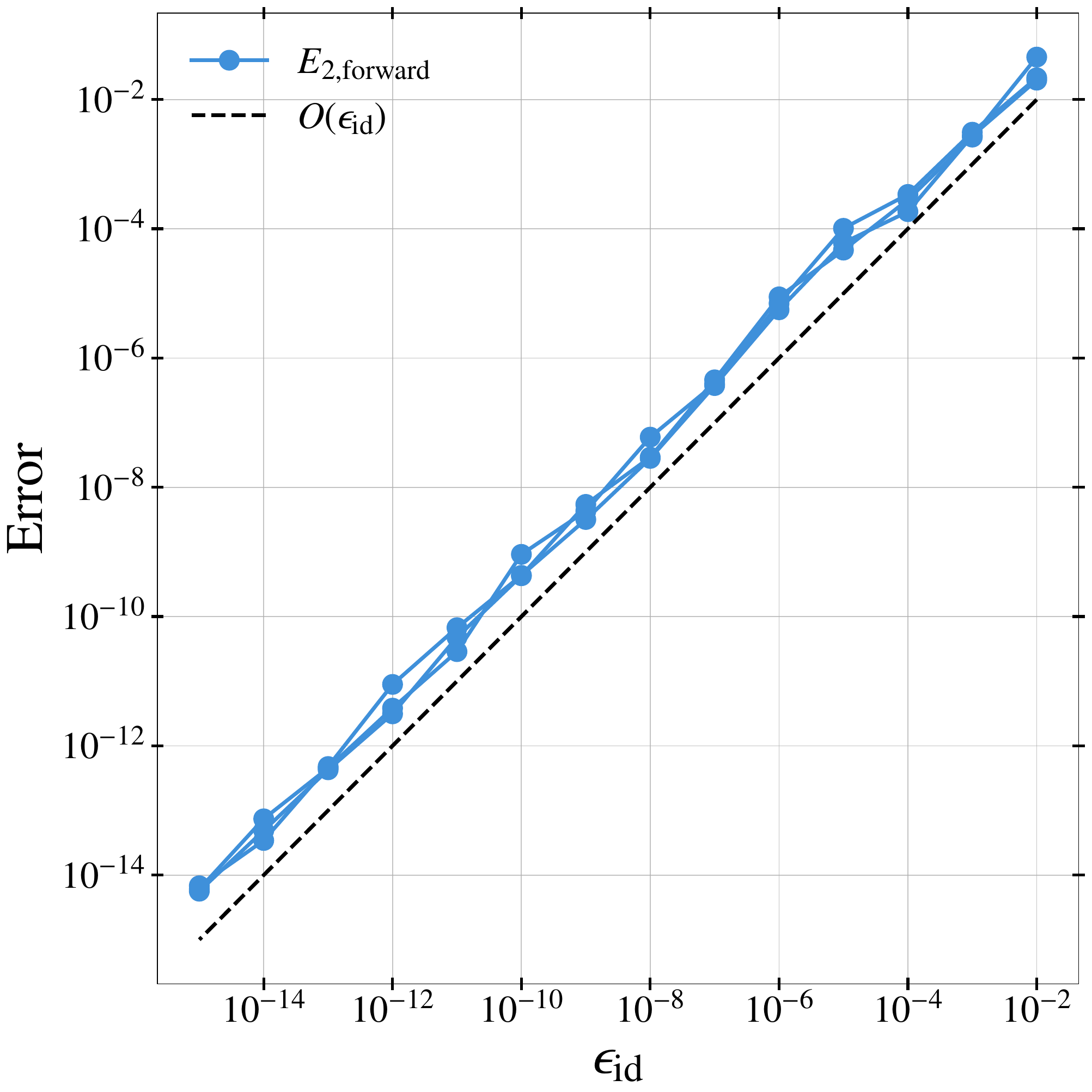}
\caption{2D single-layer ($\mathcal{S}$).}
\end{subfigure}%
\begin{subfigure}{0.5\linewidth}
\centering
\includegraphics[width=0.7\linewidth]{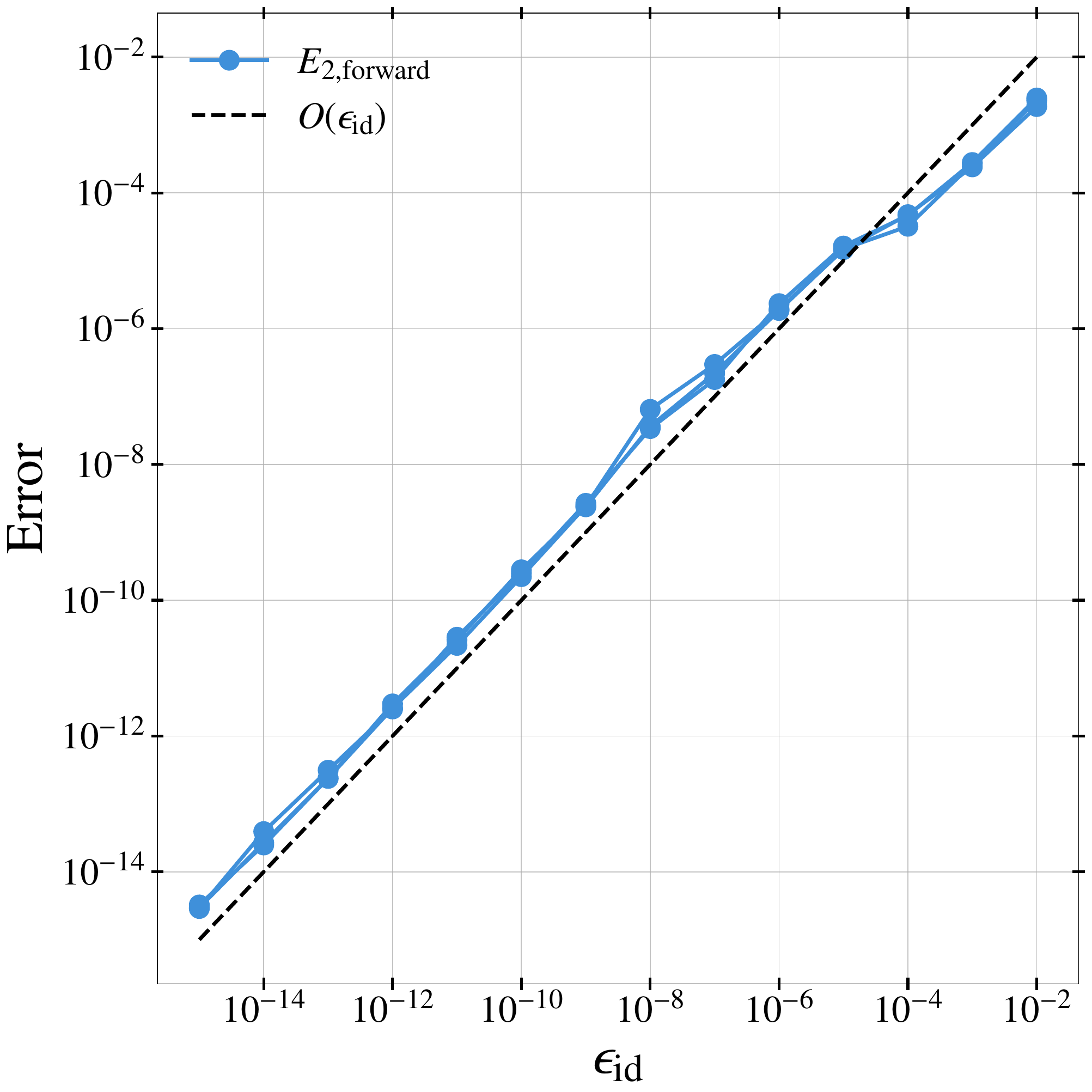}
\caption{2D double-layer ($-\sfrac{1}{2} \mathcal{I} + \mathcal{D}$).}
\end{subfigure}

\begin{subfigure}{0.5\linewidth}
\centering
\includegraphics[width=0.7\linewidth]{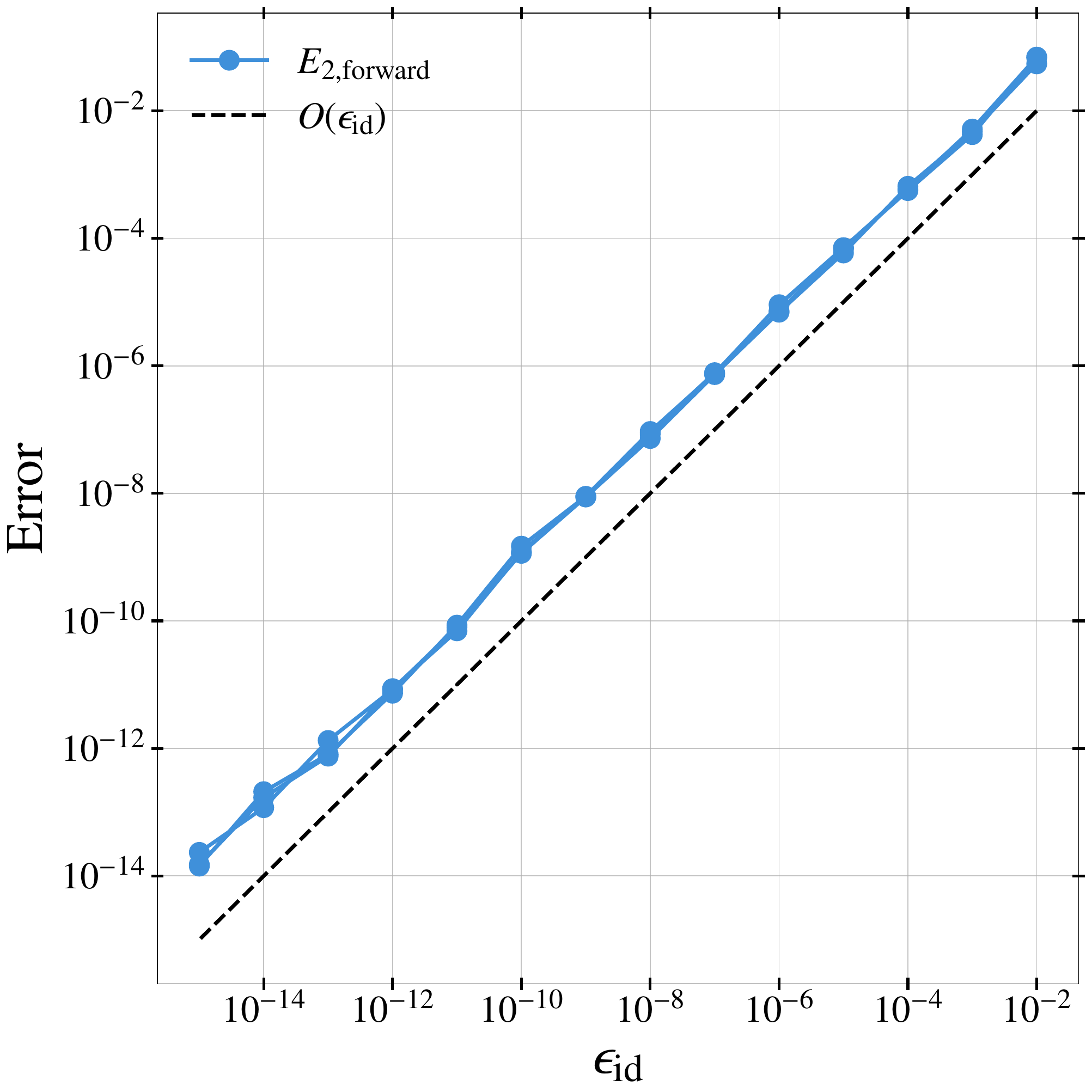}
\caption{3D single-layer ($\mathcal{S}$).}
\end{subfigure}%
\begin{subfigure}{0.5\linewidth}
\centering
\includegraphics[width=0.7\linewidth]{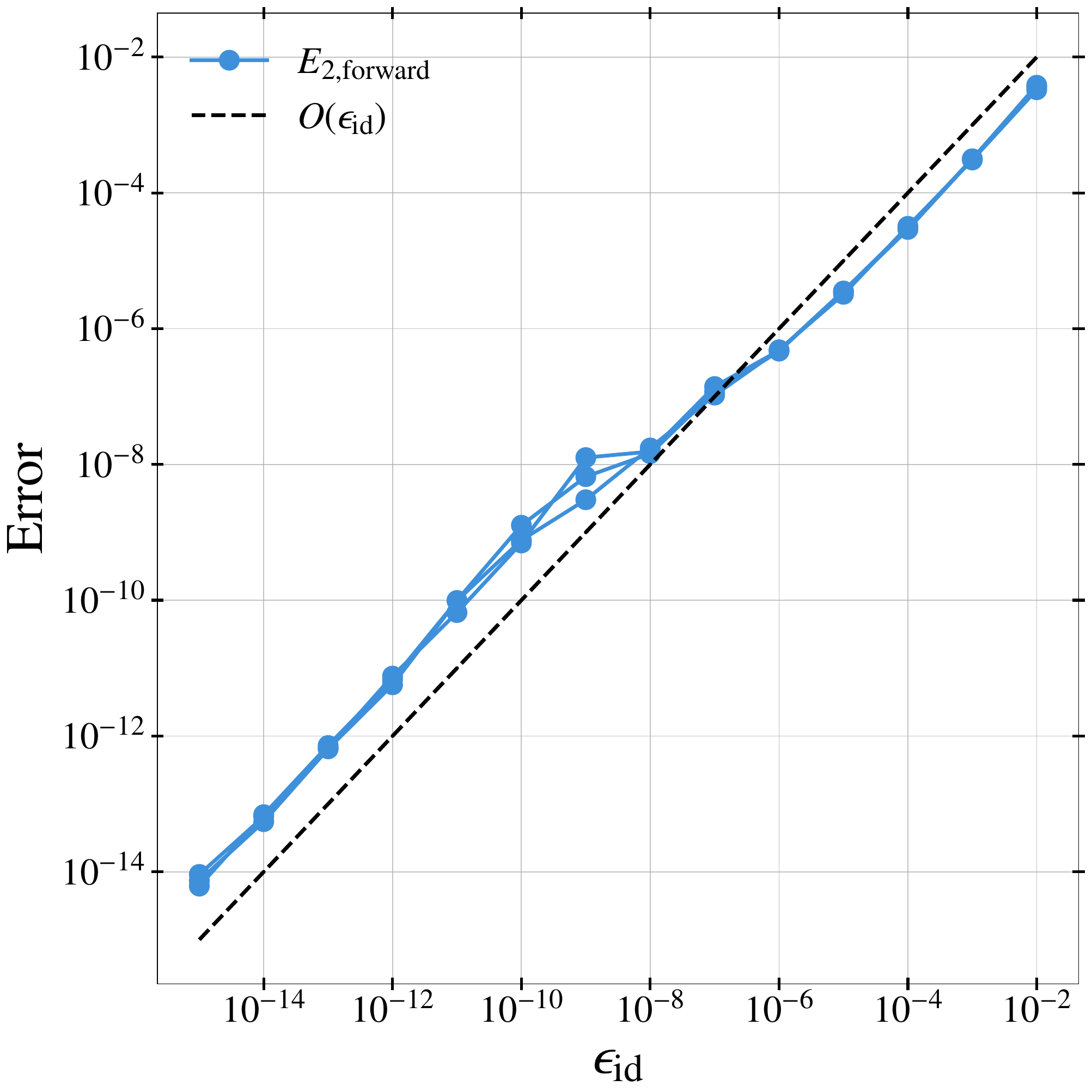}
\caption{3D double-layer ($-\sfrac{1}{2} \mathcal{I} + \mathcal{D}$).}
\end{subfigure}
\caption{Forward error for a Laplace (a,c) single-layer potential and a
  (b,d) double-layer potential from~\eqref{eq:prelim:lp}. Each (blue) line curve
  represents a different randomly generated solution vector $\vect{\sigma}$ and
  right-hand side $\vect{b}$ for the same approximate operator $\matr{A}_\epsilon$.}
\label{fig:results:forward}
\end{figure}

We then skeletonize the single-layer and double-layer potentials in two and
three dimensions for the range of $\epsilon_{\text{id}} \in \{10^{-1}, \dots, 10^{-15}\}$.
As can be seen in \Cref{fig:results:forward}, an error proportional to the ID
tolerance is achieved at every value of the parameter. Furthermore, the
approximation can achieve close to machine epsilon for IEEE double precision
floating point numbers.

\subsection{Solution Accuracy}
\label{ssc:result:solution}

In this final test of the direct solver, we verify the
accuracy obtained when solving a linear system, i.e.,\ the solution accuracy.
As in the forward case, we consider the relative error given by
\[
E_{2, \text{solution}}(\epsilon_{\text{id}}) =
  \frac{\|\vect{\sigma} - \matr{A}^{-1}_{\epsilon_{\text{id}}} \vect{b}\|_2}
  {\|\vect{b}\|_2},
\]
where $\vect{\sigma}$ is randomly generated and $\vect{b} = \matr{A} \vect{\sigma}$.
As shown in~\Cref{sc:errors}, the error in the approximation of the inverse
operator is not necessarily bounded in the same way as that of the forward operator.
Furthermore, the conditioning of the operator will play a large part in the
observed solution error. This is especially the case for the single-layer potential,
which is known to be compact ($0$ is an accumulation point of the spectrum)
with an undefined (``infinite'') condition number that makes itself felt with a
finer discretization. For the double-layer representation of the Dirichlet boundary
value problem, the inverse and any approximating sequence are better behaved
according to Anselone's Theorem (see~\cite[Theorem 10.12]{Kress2013}).

\begin{figure}[ht!]
\begin{subfigure}{0.5\linewidth}
\centering
\includegraphics[width=0.7\linewidth]{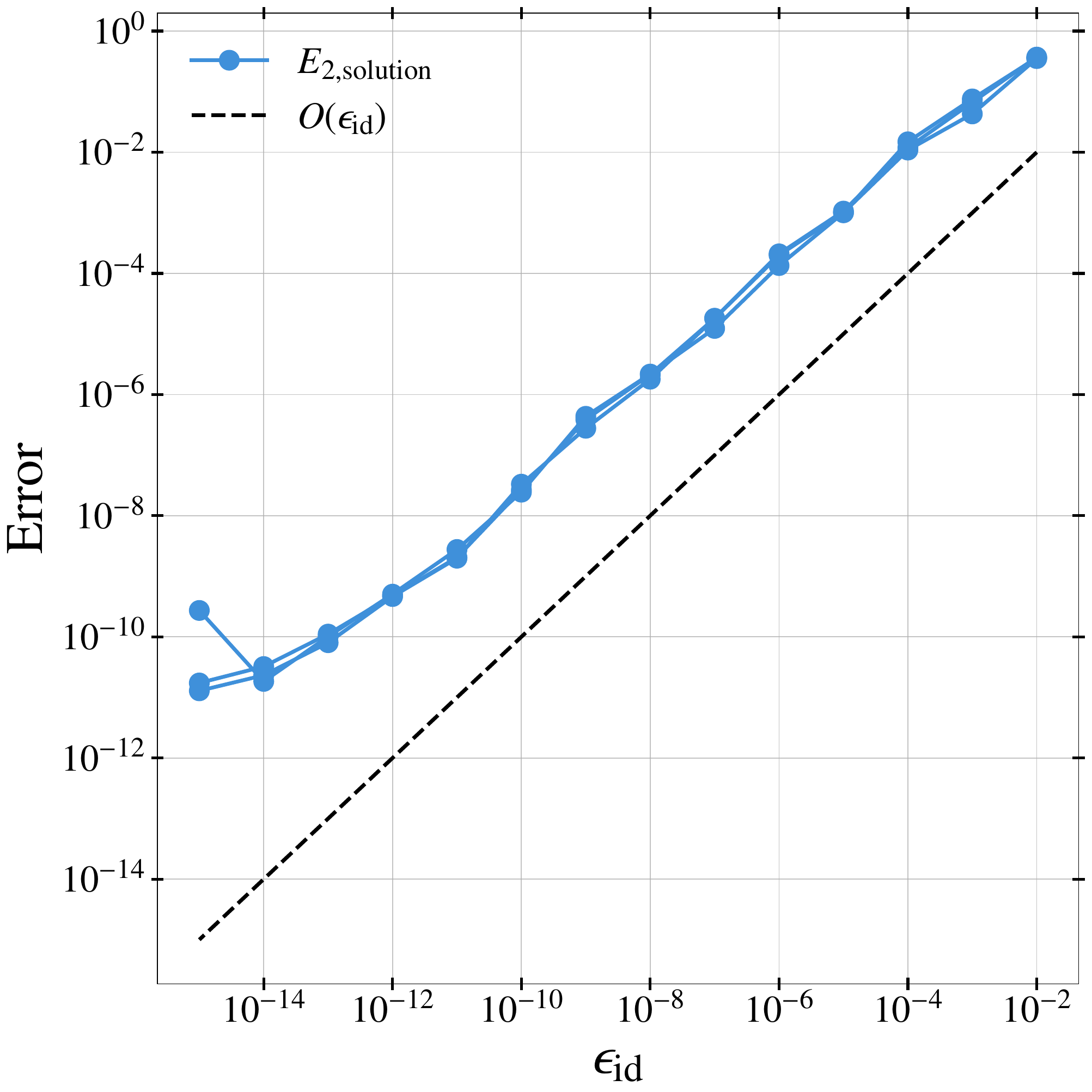}
\caption{2D single-layer ($\mathcal{S}$).}
\end{subfigure}%
\begin{subfigure}{0.5\linewidth}
\centering
\includegraphics[width=0.7\linewidth]{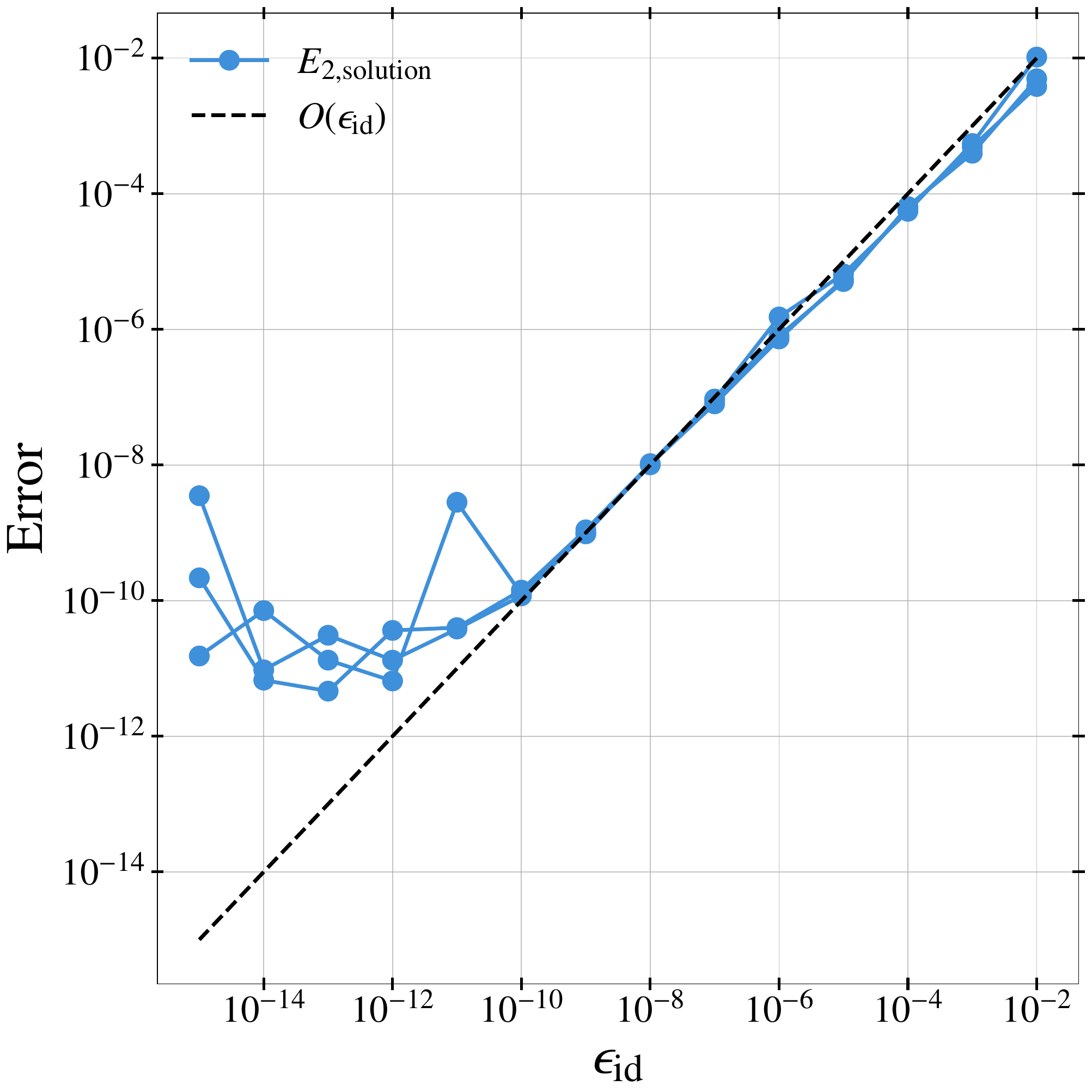}
\caption{2D double-layer ($-\sfrac{1}{2} \mathcal{I} + \mathcal{D}$).}
\end{subfigure}

\begin{subfigure}{0.5\linewidth}
\centering
\includegraphics[width=0.7\linewidth]{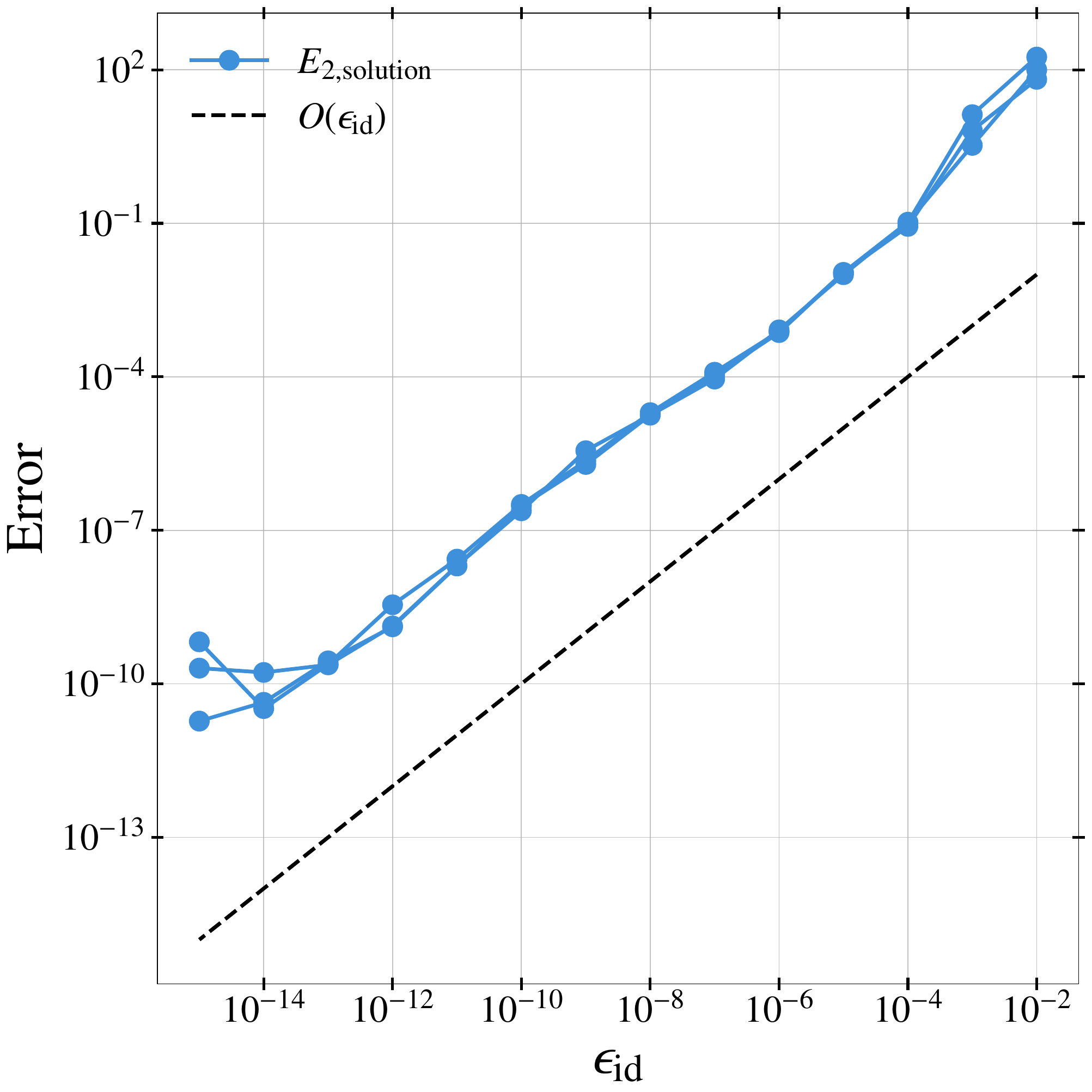}
\caption{3D single-layer ($\mathcal{S}$).}
\end{subfigure}%
\begin{subfigure}{0.5\linewidth}
\centering
\includegraphics[width=0.7\linewidth]{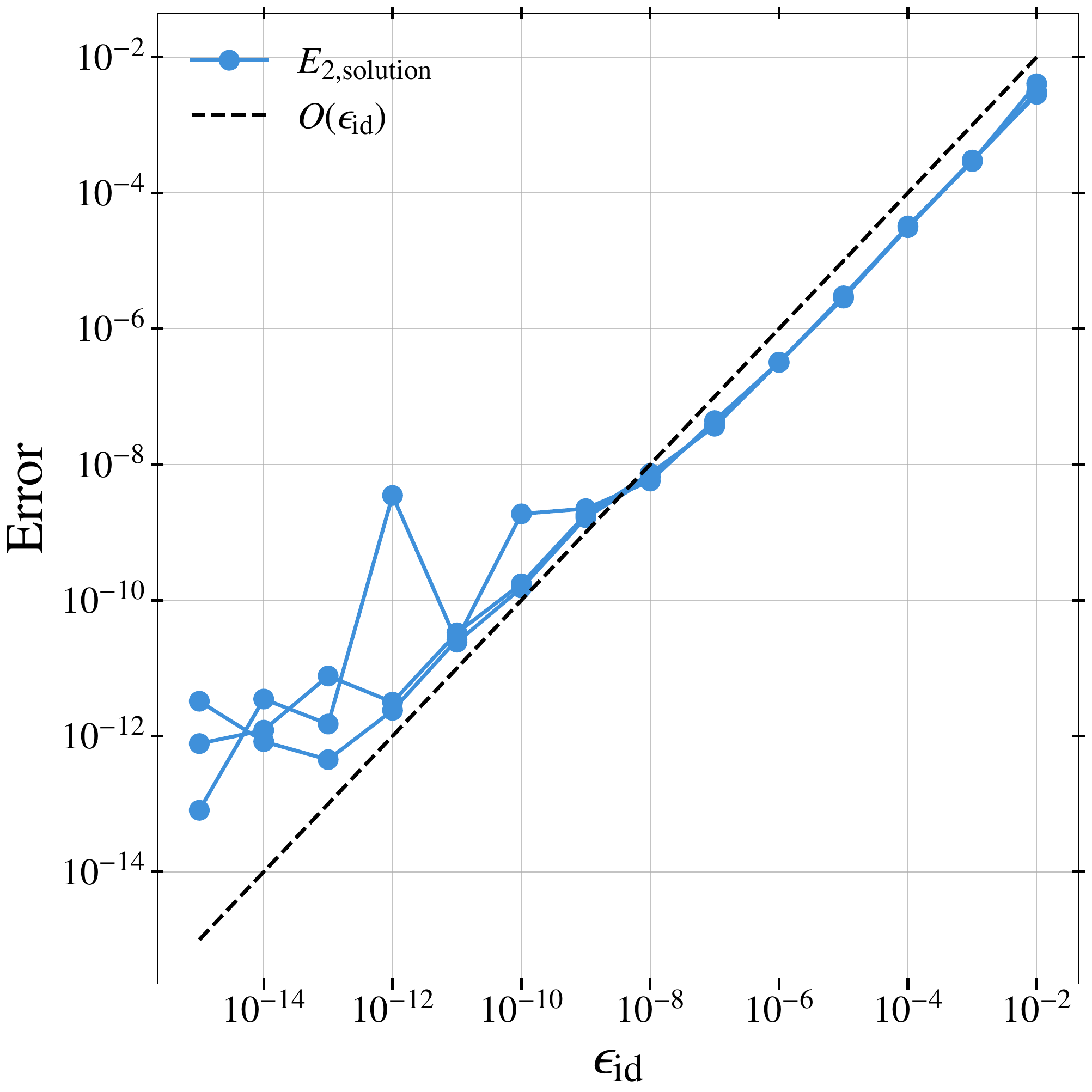}
\caption{3D double-layer ($-\sfrac{1}{2} \mathcal{I} + \mathcal{D}$).}
\end{subfigure}
\caption{Solution error for a Laplace (a,c) single-layer potential and a
  (b,d) double-layer potential from~\eqref{eq:prelim:lp}. Each (blue) line curve represents a different
  randomly generated solution vector $\vect{\sigma}$ and right-hand side $\vect{b}$
  for the same approximate operator $\matr{A}_\epsilon$.}
\label{fig:results:solution}
\end{figure}

As with the forward accuracy from the previous section, we check that the
solution error is achieved for a range of $\epsilon_{\text{id}}$ using the proxy
count approximations from~\Cref{fig:results:model_proxy}. The results can be
seen in~\Cref{fig:results:solution} for the range of operators under consideration.
As expected, the double-layer representation is well-behaved in both 2D and 3D for
the chosen setup. However, the error in the single-layer potential is offset by
a constant factor that appears to depend on the condition number of the discrete
operator. This matches the theory presented in~\Cref{sc:errors} and shows that
the error model allows accurate prediction of the approximation error of the
inverse skeletonized operators.

\subsection{Scaling of Computational Cost}
\label{ssc:result:scaling}

Another important benefit of the direct solver is its asymptotic efficiency, as
compared to a standard application of an LU-based factorization. As discussed
in~\cite{Gillman2012}, we expect that the implementation presented
in~\Cref{ssc:schemes:skeletonization} will result in a scaling from
\eqref{eq:schemes:scaling}
\[
T =
\begin{cases}
O(n), & \quad d = 2, \\
\displaystyle
O(n^{\frac{3}{2}}), & \quad d = 3.
\end{cases}
\]

We verify that the scaling of the implementation matches this expectations
in~\Cref{fig:results:scaling_single} and~\Cref{fig:results:scaling_double} for
the single-layer and double-layer potentials, respectively. In two dimensions,
we take $\{384, 608, 1024, 2048, 3072, 4096\}$ elements in the
\stagetwo{} discretization, which results in $\{1920, 3040, 5120, 10240,
\penalty0 15360, 20480\}$ total degrees of freedom. Similarly, in three dimensions we
take a spherical geometry (as opposed to the torus used previously) and
$\{6990, 8415,\penalty0 14775, 21180, 32955, 60435\}$ total degrees of freedom. The
spherical geometry is generated by \texttt{gmsh} and is chosen because it features
less regular element shapes and sizes. \reva{At a certain geometry size in 3D, the solve
has lower-than-expected cost, leading to a `downward spike' in the plot.
A detailed investigation revealed that this case exhibited an
unbalanced clustering of the geometry at the second level of the tree, leading to
the root-level solve having reduced cost.}

\begin{figure}[ht!]
\begin{subfigure}{0.5\linewidth}
\centering
\includegraphics[width=0.8\linewidth]{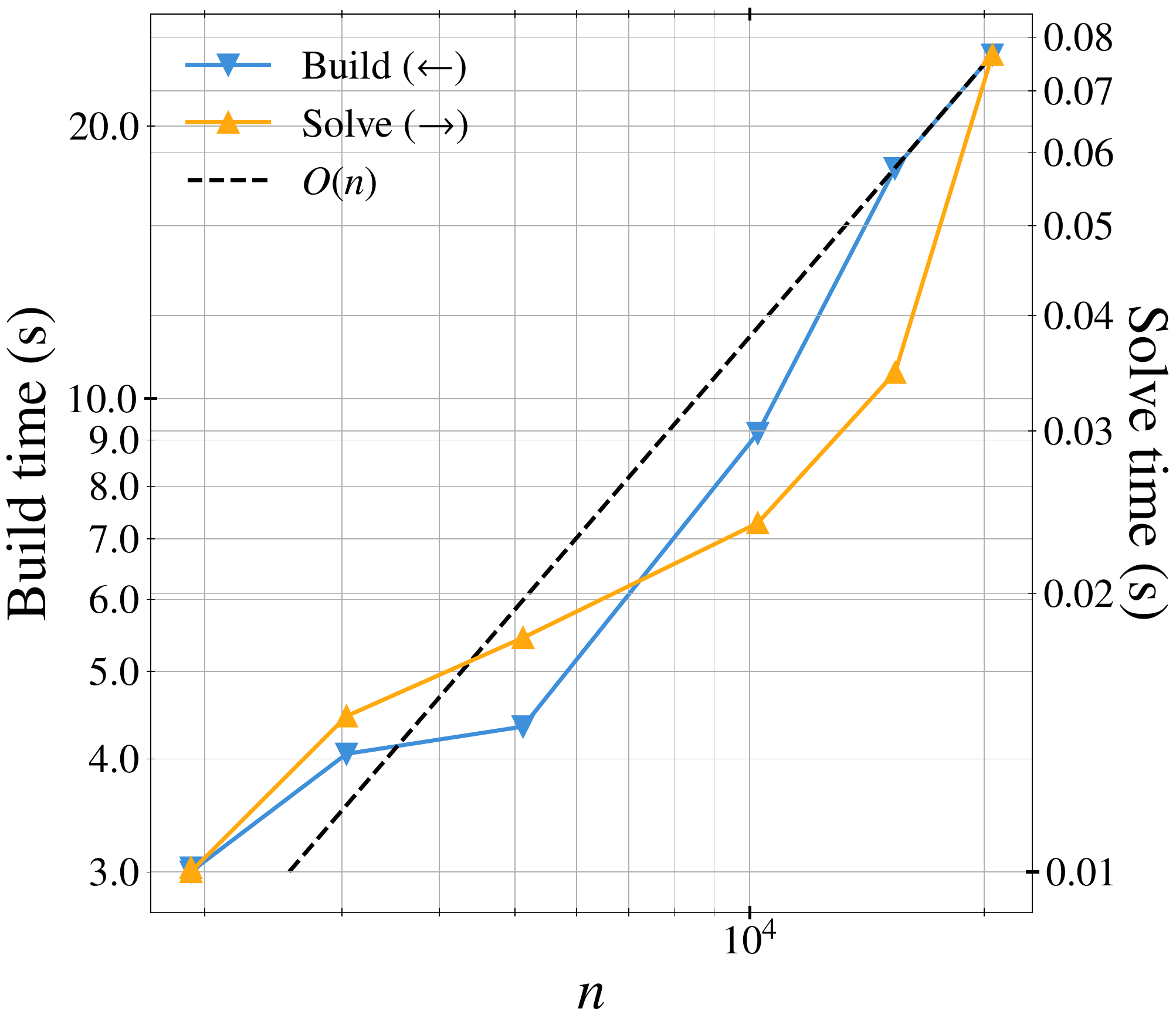}
\caption{2D single-layer.}
\end{subfigure}%
\begin{subfigure}{0.5\linewidth}
\centering
\includegraphics[width=0.8\linewidth]{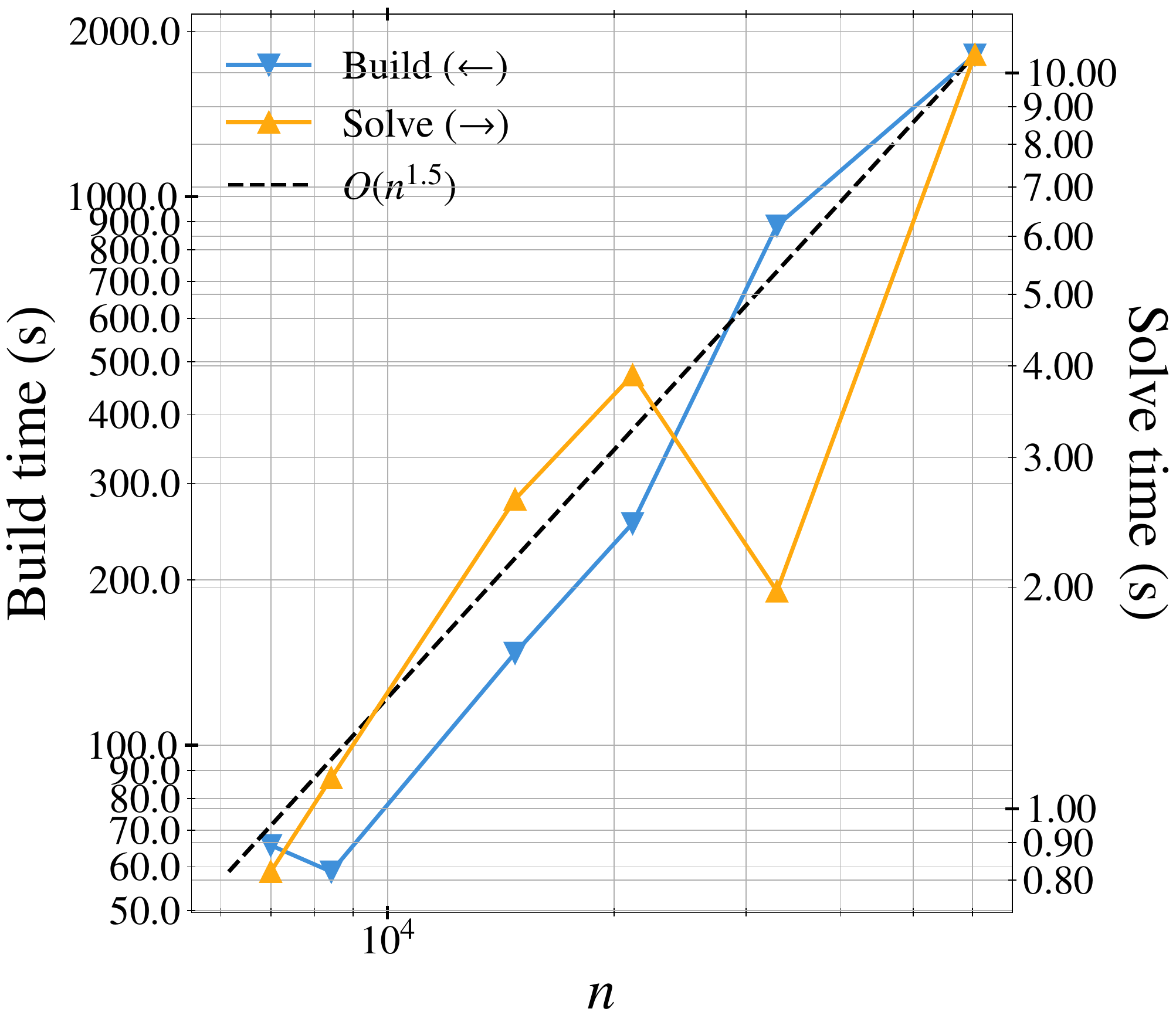}
\caption{3D single-layer.}
\end{subfigure}
\caption{Timing (in seconds) for a solving a single-layer potential using the
  skeletonized direct solver in a logarithmic scale.}
\label{fig:results:scaling_single}
\end{figure}

\begin{figure}[ht!]
\begin{subfigure}{0.5\linewidth}
\centering
\includegraphics[width=0.8\linewidth]{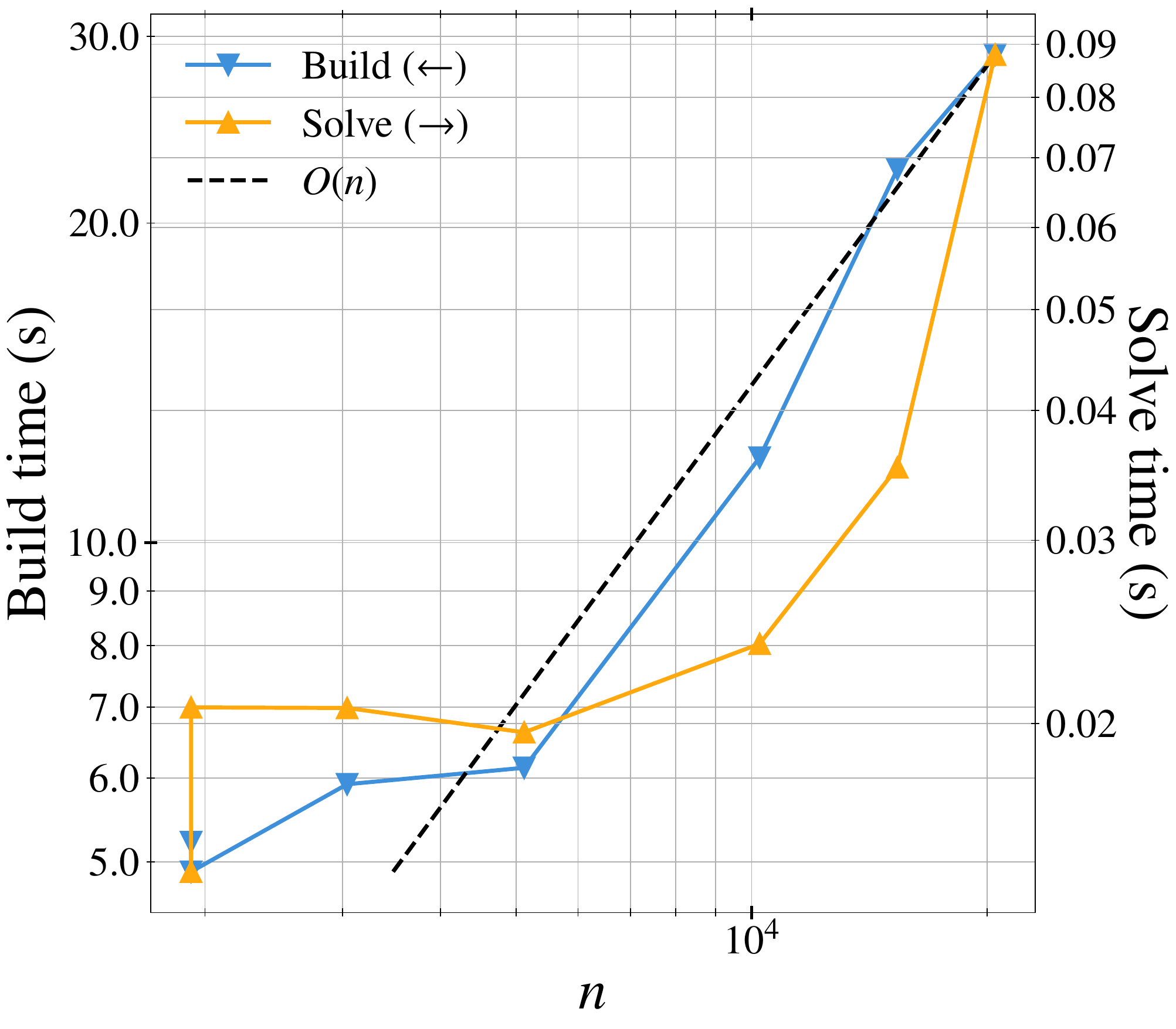}
\caption{2D double-layer.}
\end{subfigure}%
\begin{subfigure}{0.5\linewidth}
\centering
\includegraphics[width=0.8\linewidth]{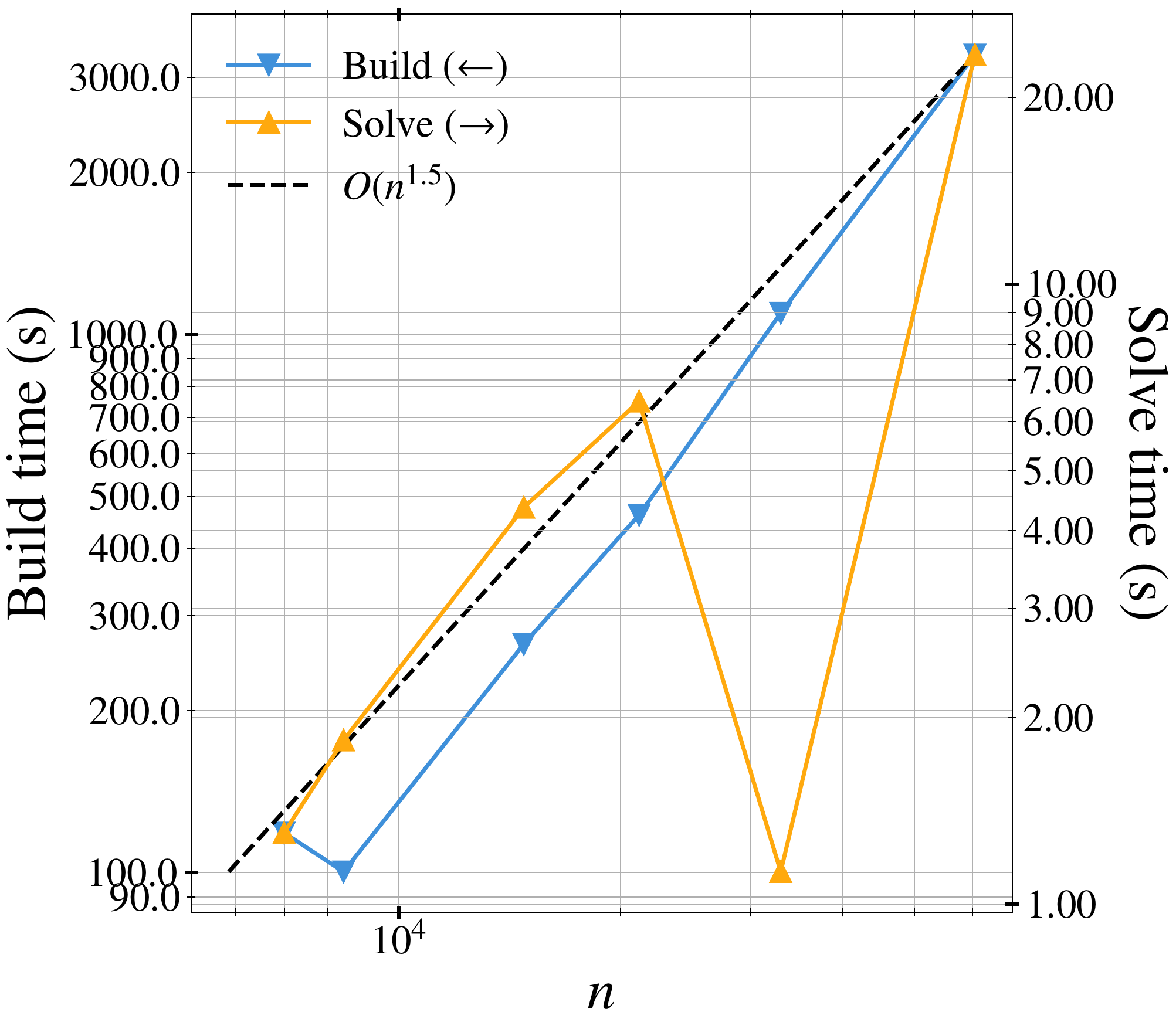}
\caption{3D double-layer.}
\end{subfigure}
\caption{
  Timing (in seconds) for a solving a double-layer potential using the skeletonized
  direct solver in a logarithmic scale. The presented results are a mean over multiple
  runs with one standard deviation shown as a shaded region.}
\label{fig:results:scaling_double}
\end{figure}

In both cases, the asymptotic behavior of the scaling matches the expected
results from~\eqref{eq:schemes:scaling}. We also observe that the most
time-consuming step of direct solver is the construction of the recursive skeletonization,
where the interpolative decomposition dominates when compared to the source-proxy
and source-target evaluations. Therefore, we expect that improvements in the
efficiency of the ID or a similar algorithm (e.g.,\ a CUR decomposition) can
significantly lower the cost of the direct solver construction step. However, the
solution process does not involve computation of IDs and is therefore
significantly cheaper. Additionally, it can be applied repetitively once
constructed, e.g.,\ as a preconditioner.

\section{Conclusions}
\label{sc:conclusions}

This work introduces a direct solver based on proxy skeletonization for homogeneous
elliptic operators discretized with the Quadrature by Expansion method. We establish
that the direct solver framework is compatible with the QBX method with only small
modifications to the construction and choice of parameters. This extension is
accompanied by a detailed error estimate that highlights the dependence on the
parameters of interest, such as the tolerance and the proxy count and radius of
the spherical proxy surface.

For a practical direct solver implementation, we provide means to control
the error introduced by the skeletonization (handled by the ID) and by the
proxy approximation. The main component of this result is an analysis of the
error introduced by using a proxy surface to approximate far field interactions.
This is done by defining the Poisson kernel on the sphere and constructing a
multipole-based estimate that applies up to the boundary of the proxy surface.
This model is then validated against empirical results on non-trivial geometries.
We find that it is in good agreement with respect to the asymptotic behavior,
but pessimistic with respect to the model constants.

Numerical experiments show that the resulting direct solver can achieve
close to design tolerance, based on the ID tolerance, for the matrix-vector
multiplication. The inverse operator can suffer from additional errors that are
likely due to conditioning. Furthermore, the solver can attain predicted
asymptotic scaling of computational cost for the different operators. The
theoretical analysis has been carried out in three dimensions,
with a straightforward generalization to the two-dimensional case. The associated
numerical tests have been carried out
in both two and three dimensions and have shown that a robust implementation of
the direct solver makes no distinction between the two cases.

Further optimizations to the scheme are the subject of future study. In particular,
it has been shown that the 3D direct solver for HSS matrices can also attain
linear scaling by further recursive skeletonization of the interpolation matrices
\cite{Ho2016}. Extensions of the error model to other families of elliptic equations,
such as Helmholtz and Yukawa, are also important and can highlight the dependence
on physical parameters, such as the wave number, that impact the efficiency of the
direct solver.

\appendix

\section{Convergence with Grid Size}
\label{ax:convergence}

\begin{figure}[ht!]
\begin{subfigure}{0.5\linewidth}
\centering
\includegraphics[width=0.7\linewidth]{ds-backward-convergence-2d}
\caption{2D double-layer ($-\sfrac{1}{2} \mathcal{I} + \mathcal{D}$)}
\end{subfigure}%
\begin{subfigure}{0.5\linewidth}
\centering
\includegraphics[width=0.7\linewidth]{ds-backward-convergence-3d}
\caption{3D double-layer ($-\sfrac{1}{2} \mathcal{I} + \mathcal{D}$)}
\end{subfigure}
\caption{Convergence of the direct solver with grid size $h_{\text{max}}$.}
\label{fig:ax:convergence}
\end{figure}

\reva{
To test the convergence of the QBX-based direct solver with respect to grid size,
we construct a known exact solution to~\eqref{eq:prelim:lp}. Following the results
from~\Cref{sc:results}, we focus on solving an exterior Dirichlet problem for the
Laplacian using the double-layer representation from~\eqref{eq:prelim:layer_potential}
on the geometries described in~\Cref{ssc:result:scaling}. This solution is constructed
by considering a set of charges $\gamma_j$, for $0 \le j < N_s$, located at points
$\vect{z}_j$ that are placed on a circle of radius $0.25$ and $0.5$ for the two- and
three-dimensional geometry, respectively. The charge strengths $\gamma_j$ are chosen
randomly on $[0, 1]$ and modified to have zero average. The boundary conditions are
evaluated directly from}
\[
b(\vect{x}_i) = \sum_{j = 0}^{N_s - 1} \gamma_j G(\vect{x_i}, \vect{z}_j).
\]
\reva{where $\vect{x}_i \in X$ are target points on the geometry $\Sigma$. To evaluate
the accuracy of the solution, we compare the result by evaluating the charges at a set
of exterior off-surface target points $\hat{\vect{z}}_i$, for $0 \le i < N_t$. Those target points are
placed on a circle of radius $3$ for both the two- and three-dimensional geometries. The
reference solution is computed as}
\[
x_{\text{ref}}(\hat{\vect{z}}_i) =
  \sum_{j = 0}^{N_s - 1} \gamma_j G(\hat{\vect{z}}_i, \vect{z}_i).
\]

\reva{For this test, we modify the quadrature orders used to discretize the
geometry to a Gauss--Legendre quadrature rule with 21 nodes in two dimensions and a
Vioreanu--Rokhlin quadrature rule with 153 nodes in three dimensions. This oversampling
is required to ensure convergence of the QBX method (see~\cite{Kloeckner2013}). Finally,
\Cref{fig:ax:convergence} shows the convergence results for the direct solver on this
known solution. In both cases, we can see that the error convergences with order
$p_{\text{qbx}}$. The presented errors are computed using the standard $\ell_2$ norm}
\[
E_{2, \text{solution}} =
  \frac{\|\vect{x} - \vect{x}_{\text{ref}}\|_2}{\|\vect{x}_{\text{ref}}\|_2}
\]
\reva{and the grid spacing $h_{\text{max}}$ is approximated from the
singular values of the parametrization Jacobian on the surface.}

\section*{Declarations}

\begin{acknowledgements}
This work was supported in part by West University of Timișoara, Romania, START
Grant No. 33580/25.05.2023 (Fikl). It was also supported by the US National
Science Foundation under award numbers DMS-1654756, SHF-1911019, OAC-1931577,
and DMS-2410943, as well as the Siebel School of Computing and Data Science at
the University of Illinois at Urbana-Champaign (Kloeckner).
\end{acknowledgements}

\noindent \textbf{Data Availability}.
The software developed and used in this study is openly available in the Zenodo repository at
\href{https://doi.org/10.5281/zenodo.15487041}{https://doi.org/10.5281/zenodo.15487041}.
This archive (version 2.0) includes all source code and documentation required to
reproduce the results, and is released under the MIT License.

\medskip

\noindent \textbf{Declaration of Interests}.
The authors have no competing interests to declare that are relevant to the content of this article.

\bibliography{qbx-direct}

\end{document}
